\documentclass[12pt,twoside]{article}
\usepackage[latin1]{inputenc}
\usepackage[english]{babel}
\usepackage{clm-macros}
\usepackage{nishad-macros}
\usepackage{amsmath}
\usepackage{psfrag}
\usepackage{epsfig}
\usepackage{graphicx}
\usepackage{verbatim}
\usepackage{tikz}
\usepackage{datetime}

\usepackage{enumerate}

\usetikzlibrary{decorations}
\usetikzlibrary{decorations.pathmorphing}
\usetikzlibrary{arrows}
\tikzstyle{every node}=[circle, draw, fill=white,inner sep=0pt, minimum width=4pt]
\tikzstyle{nodelabel}=[rounded corners,fill=none,inner sep=5pt,draw=none]
\tikzstyle{matching} = [ultra thick]
\tikzset{snake/.style={decorate, decoration=snake}}

\newdateformat{UKvardate}{
\THEDAY\ \monthname[\THEMONTH], \THEYEAR}

\begin{document}

\title{Generating Near-Bipartite Bricks
\footnote{A shorter version has been accepted for publication in the Journal of Graph Theory.}}
\author{Nishad Kothari\footnote{\noindent Partially supported by NSERC grant (RGPIN-2014-04351, J. Cheriyan).}
\\
nishadkothari@gmail.com
\\
Dept. of Combinatorics and Optimization, U. of Waterloo}
\UKvardate
\date{20 November, 2016}
  \maketitle 
  \thispagestyle{empty}

\begin{abstract} 

A $3$-connected graph $G$ is a {\it brick} if, for any two vertices $u$~and~$v$, the graph $G-\{u,v\}$ has a perfect matching.
Deleting an edge $e$ from a brick~$G$ results in a graph with zero, one or two vertices of degree two. The {\it bicontraction} of a vertex
of degree two consists of contracting the two edges incident with it; and the {\it retract} of~$G-e$ is the graph~$J$ obtained from it by bicontracting
all its vertices of degree two. An edge $e$ is {\it thin} if $J$ is also a brick. Carvalho, Lucchesi and Murty
[How to build a brick, Discrete Mathematics 306 (2006), 2383-2410] showed that every brick, distinct from $K_4$,
the triangular prism~$\overline{C_6}$
and the Petersen graph, has a thin edge. Their theorem yields a generation procedure for bricks, using which they showed
that every simple planar solid brick is an odd wheel.

\smallskip
A brick~$G$ is {\it \nb} if it has a pair of edges $\alpha$ and $\beta$ such that $G-\{\alpha,\beta\}$ is bipartite and matching covered;
examples are $K_4$~and~$\overline{C_6}$.
The significance of \nb\ graphs arises from the theory of ear decompositions of matching covered graphs.

\smallskip
The object of this paper is to establish a generation procedure which is specific to the class of \nb\ bricks.
In particular, we prove that if $G$ is any \nb\ brick, distinct from $K_4$~and~$\overline{C_6}$, then $G$ has a thin edge~$e$
such that the retract~$J$ of~$G-e$ is also \nb.

\end{abstract}

\newpage
\tableofcontents

\section{Matching Covered Graphs}
\label{sec:mcg}

For general graph theoretic notation and terminology,
we refer the reader to Bondy and Murty \cite{bomu08}.
All graphs considered here are loopless;
however, we allow multiple edges.
An edge of a graph
is {\it admissible} if there is a perfect matching of the
graph that contains it.
A connected graph with two or more vertices
is {\it matching covered} if each of its edges is admissible.
For a comprehensive treatment of matching theory
and its origins, we refer the reader to Lov{\'a}sz and
Plummer \cite{lopl86}, wherein matching covered graphs
are referred to as `1-extendable' graphs.

\smallskip
In this section, we briefly review the relevant
terminology, definitions and results from the theory of matching
covered graphs.

\subsection{Canonical Partition}

Tutte's Theorem states that a graph $G$ has a perfect
matching if and only~if ${\sf odd}(G-S) \leq |S|$
for each subset $S$ of $G$, where ${\sf odd}(G-S)$
denotes the number of odd components of $G-S$.
For a graph $G$ that has a perfect matching,
a nonempty subset $S$ of its vertices is a {\it barrier}
if it satisfies the equality ${\sf odd}(G-S)=|S|$.
The following proposition is easily deduced from Tutte's Theorem,
and yields a characterization of matching covered graphs.

\begin{prop}\label{prop:delete-two-vertices-has-pm}
Let $G$ be a graph that has a perfect matching. Let $u$~and~$v$
be distinct vertices of~$G$. Then the graph $G-\{u,v\}$ has a perfect
matching if and only if there is no barrier of $G$ which contains
both $u$~and~$v$.
\end{prop}

\begin{cor}
\label{cor:characterization-mcg}
Let $G$ be a connected graph with a perfect matching.
Then $G$ is matching covered if and only if every barrier of~$G$
is stable (that is, an independent set).
\end{cor}

The following fundamental theorem is due to Kotzig (see \cite[page 150]{lopl86}).

\begin{thm}
{\sc [The Canonical Partition Theorem]}
\label{thm:canonical-partition}
The maximal barriers of a matching covered graph~$G$ partition its vertex set.
\end{thm}

For a matching covered graph~$G$, the partition of its vertex set defined by
its maximal barriers is called the {\it canonical partition} of~$V(G)$.
For instance, for a bipartite matching covered graph~$H[A,B]$, the canonical
partition of~$V(H)$ consists of precisely two parts, namely, its color classes $A$~and~$B$;
this is implied by the following proposition which may be derived from the well-known
Hall's Theorem. (The neighbourhood of a set of vertices~$S$ is denoted by $N(S)$.)

\begin{prop}
\label{prop:characterizations-of-bipmcg}
Let $H[A,B]$ denote a bipartite graph with four or more vertices, where $|A|=|B|$.
Then the following statements are equivalent:
\begin{enumerate}[(i)]
\item $H$ is matching covered,
\item $|N(S)| \geq |S|+1$ for every nonempty proper subset $S$ of~$A$, and
\item $H-\{a,b\}$ has a perfect matching for each pair of vertices $a \in A$ and $b \in B$.
\end{enumerate}
\end{prop}

A graph $G$, with four or more vertices, is {\it bicritical} if $G-\{u,v\}$
has a perfect matching for every pair of distinct vertices $u$~and~$v$.
A barrier is {\it trivial} if it has a single vertex.
Proposition~\ref{prop:delete-two-vertices-has-pm}
implies the following characterization of bicritical graphs.

\begin{prop}
\label{prop:characterization-bicritical}
Let $G$ be a connected graph with a perfect matching.
Then $G$ is bicritical if and only if every barrier of~$G$
is trivial.
\end{prop}

Thus, for a bicritical graph~$G$, the canonical partition of $V(G)$ consists
of~$|V(G)|$ parts, each of which contains a single vertex.

\subsection{Bricks and Braces}

For a nonempty proper subset~$X$ of the vertices of a graph~$G$, we denote by $\partial(X)$
the cut associated with~$X$, that is, the set of all edges of~$G$ that have one end in~$X$
and the other end in~$\overline{X}:=V(G)-X$.
We refer to $X$~and~$\overline{X}$ as the
{\it shores} of~$\partial(X)$. A cut is {\it trivial} if any of its shores is a singleton. For
a cut $\partial(X)$, we denote the graph obtained by contracting the shore~$\overline{X}$
to a single vertex~$\overline{x}$ by $G/(\overline{X} \rightarrow \overline{x})$.
In case the label of the contraction vertex~$\overline{x}$ is irrelevant, we simply write $G/ \overline{X}$.
The two graphs $G/X$ and $G/ \overline{X}$ are called the
\mbox{{\it $\partial(X)$-contractions}} of~$G$.

\smallskip
Let $G$ be a matching covered graph. A cut $\partial(X)$ is a {\it tight cut}
if \mbox{$|M \cap \partial(X)|=1$} for every perfect matching $M$ of~$G$.
It is easily
verified that if $\partial(X)$ is a nontrivial tight cut of~$G$, then each
$\partial(X)$-contraction is a matching covered graph that has strictly fewer
vertices than~$G$.
If either of the $\partial(X)$-contractions has a nontrivial tight
cut, then that graph can be further decomposed into even smaller matching covered
graphs. We can repeat this procedure until we obtain a list of matching covered
graphs, each of which is free of nontrivial tight cuts. This procedure is known as
a {\it tight cut decomposition} of~$G$.

\smallskip
Let $G$ be a matching covered graph free of nontrivial tight cuts. If $G$
is bipartite then it is a {\it brace}; otherwise it is a {\it brick}.
Thus, a tight
cut decomposition of~$G$ results in a list of bricks and braces.
In general, a matching covered graph may admit several tight cut
decompositions.
However, Lov{\'a}sz \cite{lova87} proved
the following remarkable result, and demonstrated its significance
by using it to compute the dimension of the matching lattice.

\begin{thm}
{\sc [The Unique Decomposition Theorem]}
\label{thm:lovasz-tight-cut-decomposition}
Any two tight cut decompositions of a matching covered graph yield the same list
of bricks and braces (except possibly for multiplicities of edges).
\end{thm}

In particular, any two tight cut decompositions of a matching covered graph~$G$ yield
the same number of bricks; this number is denoted by $b(G)$. We remark that $G$
is bipartite if and only if $b(G)=0$.

\smallskip
Let $G$ be a matching covered graph.
Observe that, if $S$ is a barrier of~$G$, and $K$ is an odd component of~$G-S$,
then $\partial(V(K))$ is a tight cut of~$G$. Such a tight cut is called a {\it barrier cut}.
(For instance, if $v$ is a vertex of degree two then $\{v\} \cup N(v)$ is the shore of a barrier cut.)
In particular, if $G$ is nonbipartite then each nontrivial barrier gives rise to a nontrivial
tight cut.

\smallskip
Now suppose that $\{u,v\}$ is a $2$-vertex-cut of~$G$ such that $G-\{u,v\}$ has an even component,
say~$K$. Then each of the sets $V(K) \cup \{u\}$ and $V(K) \cup \{v\}$ is a shore of a nontrivial
tight cut of~$G$. Such a tight cut is called a $2$-separation cut.
(We remark that a graph may have a tight cut which is neither a barrier cut nor a $2$-separation cut.)

\smallskip
Since a brick is a nonbipartite matching covered graph which is free of nontrivial tight cuts,
it follows from the above observations that every brick is $3$-connected and bicritical.
Edmonds, Lov{\'a}sz and Pulleyblank \cite{elp82} established the converse.

\begin{thm}
\label{thm:elp-bricks}
A graph $G$ is a brick if and only if it is $3$-connected and bicritical.
\end{thm}

In particular, a brick is free of nontrivial barriers and of $2$-vertex-cuts.
Three cubic bricks, namely $K_4$, $\overline{C_6}$ and the Petersen graph,
play a special role in the theory of matching covered graphs.

\subsection{Removable edges}

\smallskip
An edge~$e$ of a matching covered graph~$G$ is {\it removable} if $G-e$ is also matching covered;
otherwise it is {\it non-removable}.
For example, each edge of the Petersen graph is removable.
The following was established by Lov{\'a}sz~\cite{lova87}.

\begin{thm}
{\sc [Removable Edge Theorem]}
\label{thm:lovasz-removable-bricks}
Every brick distinct from $K_4$ and $\overline{C_6}$ has a removable edge.
\end{thm}

We point out that, if $e$ is a removable edge of a brick~$G$, then $G-e$ may not be a brick.
For instance, $G-e$ may have vertices of degree two.

\subsubsection{Near-bricks and \binv\ edges}
\label{sec:near-bricks-binvariant-edges}

Recall that $b(G)$ denotes the number of bricks of a matching covered graph~$G$ (in any tight cut decomposition),
and it is well-defined due to
the Unique Decomposition Theorem~(\ref{thm:lovasz-tight-cut-decomposition}).
A {\it \nbrick} is a matching covered graph with $b(G) = 1$.
Clearly, every brick is a \nbrick. However, the converse is not true.
When proving theorems concerning bricks,
one often needs the flexibility of dealing with the wider class
of {\nbrick}s, whose properties are akin to those of bricks.

\smallskip
A removable edge~$e$ of a matching covered graph~$G$ is
{\it \binv} if \mbox{$b(G-e) = b(G)$}.
In particular, if $G$ is a brick then $e$ is \binv\ if and only if $G-e$ is a \nbrick.
For instance, the graph $St_8$ shown in Figure~\ref{fig:St_8} has a unique
$b$-invariant edge $e$.

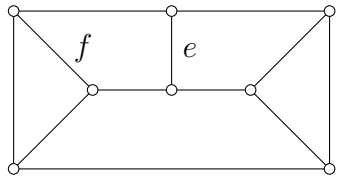
\begin{figure}[!ht]
\centering
\begin{tikzpicture}[scale=1.4]
\draw (0,0) -- (0.75,0.75) -- (0,1.5) -- (0,0);
\draw (3,0) -- (2.25,0.75) -- (3,1.5) -- (3,0);
\draw (0,0)node{} -- (3,0)node{};
\draw (0.75,0.75)node{} -- (2.25,0.75)node{};
\draw (0,1.5)node{} -- (3,1.5)node{};
\draw (1.5,0.75) -- (1.5,1.5);
\draw (1.4,1.125)node[right,nodelabel]{$e$};
\draw (0.32,1.15)node[right,nodelabel]{$f$};
\draw (1.5,0.75)node{};
\draw (1.5,1.5)node{};
\end{tikzpicture}
\caption{$St_8$ has a unique $b$-invariant edge $e$}
\label{fig:St_8}
\end{figure}

\smallskip
It is easily verified that if $G$ is the Petersen graph
and $e$ is any edge, then $b(G-e) = 2$.
Thus each edge of the Petersen graph is removable, but none of them is \binv.
Confirming a conjecture of Lov{\'a}sz, the following result was proved by Carvalho, Lucchesi and Murty \cite{clm02a}.

\begin{thm}
{\sc [\binv\ Edge Theorem]}
\label{thm:clm-binvariant-bricks}
Every brick distinct from $K_4$ and $\overline{C_6}$ and the \mbox{Petersen}
graph has a \binv\ edge.
\end{thm}

\subsubsection{Bicontractions, retracts and bi-splittings}
\label{sec:bicontraction-retract-bisplitting}

Let $G$ be a matching covered graph and let $v$ be a vertex of degree two, with two distinct neighbours
$u$ and $w$. The {\it bicontraction} of~$v$ is the operation of contracting the two edges $vu$ and $vw$
incident with~$v$. Note that $X:=\{u,v,w\}$ is the shore of a tight cut of~$G$, and that the graph resulting
from the bicontraction of~$v$ is the same as the \mbox{$\partial(X)$-contraction $G/X$}, whereas the
other $\partial(X)$-contraction $G/\overline{X}$ is isomorphic to~$C_4$ (possibly with multiple edges).

\smallskip
The {\it retract} of~$G$ is the graph obtained from~$G$ by bicontracting all its degree two vertices.
The above observation implies that the retract of a matching covered
graph is also matching covered.
Carvalho et al. \cite{clm05} showed that the retract of a matching covered graph
is unique up to isomorphism.
It is important to note that even if $G$ is simple, the retract of~$G$ may have multiple edges.

\smallskip
The operation of bi-splitting is the converse of the operation of
bicontraction. Let $H$ be a graph and let $v$ be a vertex of $H$ of
degree at least two. Let $G$ be a graph obtained from $H$ by
replacing the vertex $v$ by two new vertices $v_1$~and~$v_2$,
distributing the edges in $H$ incident with $v$ between $v_1$~and~$v_2$
such that each gets at least one, and then adding a new vertex~$v_0$
and joining it to both $v_1$~and~$v_2$. Then we say that $G$ is
obtained from $H$ by {\it bi-splitting} $v$ into
$v_1$~and~$v_2$. It is easily seen that if $H$ is matching covered,
then $G$ is also matching covered,
and that $H$ can be recovered from $G$ by bicontracting the vertex $v_0$
and denoting the contraction vertex by $v$.

\subsubsection{Thin edges}
\label{sec:thin-edges}

A \binv\ edge~$e$ of a brick~$G$ is {\it thin} if the retract of~$G-e$ is a brick.
As the graph~$G-e$ can have zero, one or two vertices of degree two,
the retract of~$G-e$ is obtained by performing at most two bicontractions, and it has
at least $|V(G)|-4$ vertices.
For example, the retract of $St_8 - e$ (see Figure~\ref{fig:St_8}) is isomorphic to~$K_4$ with
multiple edges; thus, $e$ is a thin edge.
It should be noted that, in general, a \binv\ edge may not be thin.

\smallskip
The original definition of a thin edge, due to Carvalho et al. \cite{clm06}, was in terms
of barriers; `thin' being a reference to the fact that the barriers of~$G-e$ are sparse.
This viewpoint will also be useful to us in latter sections (where further explanation
is provided).
Carvalho, Lucchesi and Murty \cite{clm06} used their
\binv\ Edge Theorem (\ref{thm:clm-binvariant-bricks})
to derive the following stronger result.

\begin{thm}
{\sc [Thin Edge Theorem]}
\label{thm:clm-thin-bricks}
Every brick distinct from $K_4$ and $\overline{C_6}$ and the \mbox{Petersen} graph has a thin edge.
\end{thm}

The following is an immediate consequence of the above theorem.

\begin{thm}
\label{thm:clm-brick-reduction}
{\rm \cite{clm06}}
Given any brick~$G$, there exists a sequence $G_1, G_2, \dots, G_k$ of bricks such that:
\begin{enumerate}[(i)]
\item $G_1$ is either $K_4$ or $\overline{C_6}$ or the Petersen graph,
\item $G_k := G$, and
\item for $2 \leq i \leq k$, there exists a thin edge~$e_i$ of~$G_i$ such that $G_{i-1}$ is the retract of~$G_i - e_i$.
\end{enumerate}
\end{thm}

Carvalho et al. \cite{clm06} also described four elementary `expansion operations'
which may be applied to any brick to
obtain a larger brick with at most four more vertices.
Each of these operations consists of bi-splitting at most two vertices and then adding a suitable edge.
Given a brick~$J$, the application of any of these four operations to~$J$ results in a brick~$G$ such that $G$ has
a thin edge~$e$ with the property that $J$ is the retract of~$G-e$. Thus, any brick may be generated from
one of the three basic bricks ($K_4$ and $\overline{C_6}$ and the Petersen graph) by means of these four
expansion operations.

\subsection{Near-Bipartite Bricks}

A nonbipartite matching covered graph~$G$ is
{\it \nb} if it has a pair $R:=\{\alpha,\beta\}$
of edges such that the graph~$H:=G-R$ is bipartite
and matching covered.
Such a pair $R$ is called a {\it removable doubleton}.

\smallskip
Furthermore, if $G$ happens to be a brick,
we say that $G$ is a {\it near-bipartite brick}.
For instance, $K_4$~and~$\overline{C_6}$ are the
smallest simple \nb\ bricks, and each of
them has three distinct removable doubletons.

\smallskip
Observe that
the edge $\alpha$ joins two vertices in one color class
of~$H$, and that $\beta$ joins two vertices in the other
color class. Consequently, if $M$ is any perfect matching
of~$G$ then $\alpha \in M$ if and only~if $\beta \in M$.
(In particular, neither $\alpha$ nor $\beta$ is a removable edge of~$G$.)
The following is an immediate consequence of \cite[Theorem~5.1]{clm02b}.

\begin{thm}\label{thm:clm-nb-nbrick}
Every \nb\ graph is a \nbrick.
\end{thm}

The significance of \nb\ graphs arises from the
theory of ear decompositions of matching covered graphs;
see \cite{clm99} and \cite{koth16};
in this context, \nb\ graphs constitute the
class of nonbipartite matching covered graphs which are
`closest' to being bipartite. Thus, certain problems which are
rather difficult to solve for general nonbipartite graphs
are easier to solve for the special case of \nb\ graphs;
for instance, although there
has been no significant progress in characterizing
Pfaffian nonbipartite graphs, Fischer and Little \cite{fili01}
were able to characterize Pfaffian \nb\ graphs.

\smallskip
The difficulty in using Theorem~\ref{thm:clm-brick-reduction}
as an induction tool for studying \nb\ bricks,
is that even if $G_k := G$ is a \nb\ brick,
there is no guarantee that all of the intermediate bricks
$G_1, G_2, \dots G_{k-1}$ are also \nb.
For instance, the brick shown in
Figure~\ref{fig:double-biwheel-of-typeI}a
is \nb\ with a (unique)
removable doubleton~\mbox{$R:=\{\alpha,\beta\}$}.
Although the edge~$e$ is thin;
the retract of~$G-e$, as shown in
Figure~\ref{fig:double-biwheel-of-typeI}b,
is not \nb\ since it has three edge-disjoint triangles.

\begin{figure}[!ht]
\centering
\begin{tikzpicture}[scale=0.85]
\draw (0,0) to [out=300,in=180] (4,-2.5) to [out=0,in=240] (8,0);

\draw (0,0) -- (1,0);
\draw (1,0) -- (2,0);
\draw (2,0) -- (3,0);
\draw (5,0) -- (6,0);
\draw (6,0) -- (7,0);
\draw (7,0) -- (8,0);

\draw[ultra thick] (0,0) -- (4,1.5);
\draw (0,0)node[fill=black]{};
\draw (1,0) -- (4,-1.5);
\draw (1,0)node{};
\draw (2,0) -- (4,1.5);
\draw (2,0)node[fill=black]{};
\draw[ultra thick] (3,0) -- (4,-1.5);

\draw[ultra thick] (5,0) -- (4,-1.5);
\draw (6,0) -- (4,1.5);
\draw (6,0)node[fill=black]{};
\draw (7,0) -- (4,-1.5);
\draw (7,0)node{};
\draw (4,-1.5)node[fill=black]{};
\draw[ultra thick] (8,0) -- (4,1.5);
\draw (8,0)node[fill=black]{};
\draw (4,1.5)node{};

\draw (3,0)node{} -- (5,0)node{};

\draw (4,-0.14)node[above,nodelabel]{$\alpha$};
\draw (4,-2.24)node[nodelabel]{$\beta$};

\draw (1.75,0.88)node[nodelabel]{$e$};
\draw (4,-3.2)node[nodelabel]{(a)};
\end{tikzpicture}
\hspace*{0.3in}
\begin{tikzpicture}[scale=0.85]
\draw (2,0) to [out=300,in=170] (4,-1.8) to [out=350,in=210] (8,0);
\draw (8,0) -- (4,-1.5);

\draw (2,0) -- (3,0);
\draw (5,0) -- (6,0);
\draw (6,0) -- (7,0);
\draw (7,0) -- (8,0);

\draw (2,0) -- (4,1.5);
\draw (2,0)node{};
\draw (3,0) -- (4,-1.5);

\draw (5,0) -- (4,-1.5);
\draw (6,0) -- (4,1.5);
\draw (6,0)node{};
\draw (7,0) -- (4,-1.5);
\draw (7,0)node{};
\draw (4,-1.5)node{};
\draw (8,0) -- (4,1.5);
\draw (8,0)node{};
\draw (4,1.5)node{};

\draw (3,0) -- (5,0);
\draw (3,0)node{};
\draw (5,0)node{};
\draw (5,-3.2)node[nodelabel]{(b)};
\end{tikzpicture}
\vspace*{-0.3in}
\caption{(a) A \nb\ brick~$G$ with a thin edge~$e$ ;
(b) The retract of~$G-e$ is not \nb}
\label{fig:double-biwheel-of-typeI}
\end{figure}

\smallskip
In other words, deleting an arbitrary thin edge
may not preserve the property of being \nb. In this sense,
the Thin Edge Theorem~(\ref{thm:clm-thin-bricks})
is inadequate for obtaining inductive proofs of results
that pertain only to the class of \nb\ bricks.

\smallskip
To fix this problem, we decided to look for a thin edge whose
deletion preserves the property of being \nb.
Our main result is as follows.

\begin{thm}\label{thm:main}
Every \nb\ brick~$G$ distinct from $K_4$ and $\overline{C_6}$
has a thin edge $e$ such that the retract of~$G-e$ is also \nb.
\end{thm}

In fact, we prove a stronger theorem.
In particular, we find it convenient to fix a removable doubleton~$R$ (of the brick under consideration),
and then look for a thin edge whose deletion preserves this removable doubleton.
To make this precise, we will first define a special type of removable edge which
we call `\Rcomp'.

\subsubsection{\Rcomp\ edges}
\label{sec:Rcompatible-edges}

We use the abbreviation {\it \Rgraph} for a \nb\ graph~$G$
with (fixed) removable doubleton~$R$, and we
shall refer to $H:=G-R$ as its {\it underlying bipartite graph}.
In the same spirit,
an {\it \Rbrick} is a brick with a removable doubleton~$R$.

\smallskip
A removable edge~$e$ of an \Rgraph~$G$ is {\it \Rcomp} if it is
removable in~$H$ as well.
Equivalently, an edge~$e$ is \Rcomp\ if $G-e$ and $H-e$ 
are both matching covered.
For instance, the graph~$St_8$ (see Figure~\ref{fig:St_8_nb})
has two removable doubletons \mbox{$R:=\{\alpha,\beta\}$} and
$R' := \{\alpha', \beta'\}$, and its unique removable edge~$e$ is
\Rcomp\ as well as \comp{R'}.

\begin{figure}[!ht]
\centering
\begin{tikzpicture}[scale=1.4]
\draw (0,0) -- (0.75,0.75) -- (0,1.5) -- (0,0);
\draw (3,0) -- (2.25,0.75) -- (3,1.5) -- (3,0);
\draw (0,0)node{} -- (3,0)node[fill=black]{};
\draw (0.75,0.75)node{} -- (2.25,0.75)node{};
\draw (0,1.5)node[fill=black]{} -- (3,1.5)node[fill=black]{};
\draw (1.5,0.75) -- (1.5,1.5);
\draw (1.4,1.125)node[right,nodelabel]{$e$};
\draw (1.5,0.75)node[fill=black]{};
\draw (1.5,1.5)node{};
\draw (0.35,0.35)node[right,nodelabel]{$\alpha$};
\draw (2.9,0.75)node[right,nodelabel]{$\beta$};
\draw (0.1,0.75)node[left,nodelabel]{$\alpha'$};
\draw (2.65,0.35)node[left,nodelabel]{$\beta'$};
\end{tikzpicture}
\caption{$e$ is \Rcomp\ as well as \comp{R'}}
\label{fig:St_8_nb}
\end{figure}

\smallskip
Now, let $G$ denote the \Rbrick\ shown in
Figure~\ref{fig:double-biwheel-of-typeI}a,
where $R:=\{\alpha,\beta\}$.
The thin edge~$e$ is incident with an edge of~$R$
at a cubic vertex; consequently, $H-e$ has a vertex
whose degree is only one, and so it is not matching covered.
In particular, $e$ is not \Rcomp.

\smallskip
The brick shown in Figure~\ref{fig:pseudo-biwheel}
has two distinct removable doubletons $R:=\{\alpha,\beta\}$
and $R':=\{\alpha',\beta'\}$. Its edges $e$~and~$f$
are both \comp{R'}, but neither of them is \Rcomp.

\begin{figure}[!ht]
\centering
\begin{tikzpicture}[scale=0.85]


\draw (5.3,1.7)node[nodelabel]{$e$};
\draw (7.7,0.3)node[nodelabel]{$f$};

\draw (3,0.77)node[nodelabel]{$\alpha$};
\draw (10,1.25)node[,nodelabel]{$\beta$};
\draw (4,2.15)node[nodelabel]{$\alpha'$};
\draw (9,-0.2)node[nodelabel]{$\beta'$};

\draw (2,1) to [out=80,in=180] (6.5,3.7) to [out=0,in=100] (11,1);

\draw (2,1) -- (4,1);
\draw (9,1) -- (11,1);
\draw (6.5,2.8) -- (2,1);
\draw (6.5,-0.8) -- (11,1);

\draw (2,1)node{};
\draw (11,1)node[fill=black]{};

\draw (4,1) -- (9,1);

\draw (4,1)node{} -- (6.5,2.8);
\draw (5,1)node[fill=black]{} -- (6.5,-0.8);
\draw (6,1)node{} -- (6.5,2.8);
\draw (7,1)node[fill=black]{} -- (6.5,-0.8);
\draw (8,1)node{} -- (6.5,2.8)node[fill=black]{};
\draw (9,1)node[fill=black]{} -- (6.5,-0.8)node{};

\end{tikzpicture}
\caption{$e$ and $f$ are \comp{R'},
but they are not \Rcomp}
\label{fig:pseudo-biwheel}
\end{figure}

\smallskip
Observe that, if $e$ is an \Rcomp\ edge of an \Rgraph~$G$,
then $R$ is a removable doubleton of~$G-e$,
whence $G-e$ is also an \Rgraph; in particular,
$G-e$ is \nb.
By Theorem~\ref{thm:clm-nb-nbrick},
$G-e$ is a \nbrick; and this proves the following.

\begin{prop}
\label{prop:Rcompatible-is-binvariant}
Every \Rcomp\ edge is \binv. \qed
\end{prop}

Furthermore, as we will see later, if $e$ is an \Rcomp\ edge of an
\Rbrick~$G$ then the unique brick~$J$ of~$G-e$ is also an \Rbrick;
in particular, $J$ is \nb.
The following is a special case of a
theorem of Carvalho, Lucchesi and Murty \cite{clm99}.

\begin{thm}
{\sc [\Rcomp\ Edge Theorem]}
\label{thm:clm-Rcompatible-nb-bricks}
Every \Rbrick\ distinct from $K_4$~and~$\overline{C_6}$ has an \Rcomp\ edge.
\end{thm}

In \cite{clm99},
they proved a stronger result. In particular,
they showed the existence of an \Rcomp\ edge in {\Rgraph}s
with minimum degree at least three.
(They did not use the term `\Rcomp'.)
Using the notion of $R$-compatibility, we
now define a thin edge whose deletion preserves
the property of being \nb.

\subsubsection{\Rthin\ edges}
\label{sec:Rthin-edges}

A thin edge~$e$ of an \Rbrick~$G$
is {\it \Rthin} if it is \Rcomp.
Equivalently, an edge~$e$ is \Rthin\ if it is \Rcomp\ as well as thin,
and in this case, the retract of~$G-e$ is also an \Rbrick.

\smallskip
As noted earlier, the graph $St_8$, shown in Figure~\ref{fig:St_8_nb},
has two removable doubletons $R$~and~$R'$. Its unique removable edge~$e$
is \Rthin\ as well as \thin{R'}; to see this, note that
the retract~$J$ of~$St_8-e$ is isomorphic to $K_4$ with multiple edges,
and each of $R$ and $R'$ is a removable doubleton of~$J$.

\smallskip
Using the \Rcomp\ Edge Theorem (\ref{thm:clm-Rcompatible-nb-bricks})
of Carvalho et al.,
we prove the following stronger result
(which immediately implies Theorem~\ref{thm:main}).

\begin{thm}
{\sc [\Rthin\ Edge Theorem]}
\label{thm:Rthin-nb-bricks}
Every \Rbrick\ distinct from $K_4$~and~$\overline{C_6}$ has an \Rthin\ edge.
\end{thm}

Our proof of the above theorem uses tools from the work of Carvalho et al. \cite{clm06},
and the overall approach is inspired by their proof of the
Thin Edge Theorem (\ref{thm:clm-thin-bricks}).
The following is an immediate consequence of Theorem~\ref{thm:Rthin-nb-bricks}.

\begin{thm}
\label{thm:nb-brick-reduction}
Given any \Rbrick~$G$,
there exists a sequence $G_1, G_2, \dots, G_k$ of {\Rbrick}s such that:
\begin{enumerate}[(i)]
\item $G_1$ is either $K_4$ or $\overline{C_6}$,
\item $G_k:=G$, and
\item for $2 \leq i \leq k$, there exists an \Rthin\ edge~$e_i$ of~$G_i$
such that $G_{i-1}$ is the retract of~$G_i - e_i$.
\end{enumerate}
\end{thm}

It follows from the above theorem
that every \nb\ brick can be generated
from one of $K_4$~and~$\overline{C_6}$
by means of the expansion operations.
Theorem~\ref{thm:Rthin-nb-bricks} and its proof also appear in the Ph.D. thesis of Kothari \cite{koth16}.

\section{Near-Bipartite Graphs}
\label{sec:nbg}

In this section, we examine properties of \nb\ graphs that are relevant
to our proof of Theorem~\ref{thm:Rthin-nb-bricks}.
Recall that an $R$-graph~$G$ is a \nb\ graph with a fixed
removable doubleton~$R$. We adopt the following notational conventions.

\begin{Not}
\label{Not:Rgraph-removable-doubleton}
For an $R$-graph~$G$, we shall denote by
$H[A,B]$ the underlying bipartite graph~$G-R$.
We let $\alpha$ and $\beta$ denote the constituent edges of~$R$,
and we adopt the convention that
$\alpha:= a_1a_2$ has both ends in~$A$,
whereas $\beta:=b_1b_2$ has both ends in~$B$.
\end{Not}

As we will see, certain pertinent properties of~$G$ are closely related to those of~$H$.
For this reason, we also review well-known facts concerning
bipartite matching covered graphs.

\subsection{The exchange property}

Recall that an edge of a matching covered graph is removable if its deletion results in another matching
covered graph. The removable edges of a bipartite graph satisfy an
`exchange property' and its proof immediately follows from
Proposition~\ref{prop:characterizations-of-bipmcg}.

\begin{prop}
{\sc [Exchange Property of Removable Edges]}
\label{prop:exchange-property-removable-bipmcg}
Let $H$ denote a bipartite matching covered graph, and
let $e$ denote a removable edge of~$H$. If $f$
is a removable edge of~$H-e$, then:
\begin{enumerate}[(i)]
\item $f$ is removable in~$H$, and
\item $e$ is removable in~$H-f$. \qed
\end{enumerate}
\end{prop}

We point out that the conclusion of
Proposition~\ref{prop:exchange-property-removable-bipmcg}
does not hold, in general, for arbitrary removable edges of
nonbipartite graphs.
For instance, as shown in Figure~\ref{fig:St_8},
the edge~$f$ is removable in the matching covered graph $St_8 - e$,
but it is not removable in~$St_8$. However, as we prove next,
the exchange property does hold for \Rcomp\ edges.
Recall that an \Rcomp\ edge
of an $R$-graph~$G$ is one which is removable in $G$
as well as in the underlying bipartite graph~$H:=G-R$;
see Section~\ref{sec:Rcompatible-edges}.

\begin{prop}
{\sc [Exchange Property of \Rcomp\ Edges]}
\label{prop:exchange-property-Rcompatible-nbmcg}
Let $G$ be an \mbox{$R$-graph}, and let $e$ denote an \Rcomp\ edge of~$G$.
If $f$ is an \Rcomp\ edge of~$G-e$, then:
\begin{enumerate}[(i)]
\item $f$ is \Rcomp\ in~$G$, and
\item $e$ is \Rcomp\ in~$G-f$.
\end{enumerate}
\end{prop}
\begin{proof}
Let $H:=G-R$.
Since $f$ is \Rcomp\ in~$G-e$,
each of the graphs~$G-e-f$ and $H-e-f$ is matching covered.
To deduce {\it (i)}, we need to show that each of $G-f$ and $H-f$ is matching covered.
Since $f$ is removable in~$H-e$, it follows
from Proposition~\ref{prop:exchange-property-removable-bipmcg} that $f$
is removable in~$H$ as well. That is, $H-f$ is matching covered.

\smallskip
Next, we note that the edge~$e$ is admissible in~$H-f$.
Thus $e$ is admissible in~$G-f$. As $G-e-f$ is matching covered,
we conclude that $G-f$ is also matching covered.
This proves {\it (i)}. Statement {\it (ii)} follows immediately,
since each of $G-f-e$ and $H-f-e$ is matching covered.
\end{proof}

\subsection{Non-removable edges of bipartite graphs}

Let $H[A,B]$ denote a bipartite graph, on four or more vertices,
that has a perfect matching.
Using the well-known Hall's Theorem,
it can be shown that an edge~$f$ of~$H$ is inadmissible
(that is, $f$ is not in any perfect matching of~$H$)
if and only if there exists a nonempty proper subset
$S$ of~$A$ such that $|N(S)| = |S|$ and
$f$ has one end in~$N(S)$ and its other end is not in~$S$.

\smallskip
Now suppose that $H$ is matching covered,
and let $e$ denote a non-removable edge of~$H$.
Then some edge~$f$ of $H-e$ is inadmissible.
This fact, coupled with the above observation,
may be used to arrive at the following
characterization of non-removable edges in
bipartite matching covered graphs;
see Figure~\ref{fig:non-removable-bipmcg}.

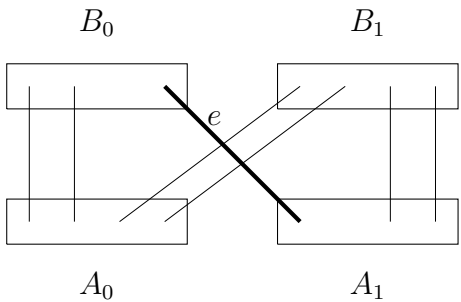
\begin{figure}[!ht]
\centering
\begin{tikzpicture}[scale=0.6]
\draw (0,0) -- (4,0) -- (4,1) -- (0,1) -- (0,0);
\draw (6,0) -- (10,0) -- (10,1) -- (6,1) -- (6,0);
\draw (2,0)node[below,nodelabel]{$A_0$};
\draw (8,0)node[below,nodelabel]{$A_1$};

\draw (0,3) -- (4,3) -- (4,4) -- (0,4) -- (0,3);
\draw (6,3) -- (10,3) -- (10,4) -- (6,4) -- (6,3);
\draw (2,4)node[above,nodelabel]{$B_0$};
\draw (8,4)node[above,nodelabel]{$B_1$};

\draw[ultra thick] (3.5,3.5) -- (6.5,0.5);
\draw (4.6,2.8)node[nodelabel]{$e$};

\draw[ultra thin] (3.5,0.5) -- (7.5,3.5);
\draw[ultra thin] (2.5,0.5) -- (6.5,3.5);

\draw[ultra thin] (1.5,0.5) -- (1.5,3.5);
\draw[ultra thin] (0.5,0.5) -- (0.5,3.5);
\draw[ultra thin] (8.5,0.5) -- (8.5,3.5);
\draw[ultra thin] (9.5,0.5) -- (9.5,3.5);

\end{tikzpicture}
\vspace*{-0.2in}
\caption{Non-removable edge of a bipartite graph}
\label{fig:non-removable-bipmcg}
\end{figure}

\begin{prop}
{\sc [Characterization of Non-removable Edges]}
\label{prop:characterization-non-removable-bipartite}
Let $H[A,B]$ denote a bipartite matching covered graph
on four or more vertices.
An edge~$e$ of~$H$ is non-removable
if and only if there exist partitions $(A_0,A_1)$ of~$A$
and $(B_0,B_1)$ of~$B$ such that $|A_0| = |B_0|$
and $e$ is the only edge joining
a vertex in~$B_0$ to a vertex in~$A_1$. \qed
\end{prop}

In our work, we will often be interested in finding an \Rcomp\
edge incident at a specified vertex~$v$ of an $R$-brick~$G$.
As a first step, we will upper bound the number of edges of~$\partial(v)$,
which are non-removable in the underlying bipartite graph~\mbox{$H:=G-R$}.
For this purpose, the next lemma
of Lov{\'a}sz and Vempala \cite{love} is especially useful. It is an
extension of
Proposition~\ref{prop:characterization-non-removable-bipartite}.
See Figure~\ref{fig:lovasz-vempala}.

\begin{figure}[!ht]
\centering
\begin{tikzpicture}[scale=0.65]
\draw[thin] (2,-0.5) -- (4,-5);
\draw[thin] (2,-0.5) -- (7.5,-5);
\draw[thin] (2,-0.5) -- (13,-5);

\draw (0,0) -- (2.5,0) -- (2.5,-1) -- (0,-1) -- (0,0);
\draw (1.25,0) node[nodelabel,above]{$B_0$};

\draw (2,-0.5)node{};
\draw (2.2,-0.5)node[left,nodelabel]{$b$};

\draw (3.5,0) -- (6,0) -- (6,-1) -- (3.5,-1) -- (3.5,0);
\draw (4.75,0) node[nodelabel,above]{$B_1$};

\draw (7,0) -- (9.5,0) -- (9.5,-1) -- (7,-1) -- (7,0);
\draw (8.25,0) node[nodelabel,above]{$B_2$};

\draw[dotted] (10,-0.5) -- (12,-0.5);

\draw (12.5,0) -- (15,0) -- (15,-1) -- (12.5,-1) -- (12.5,0);
\draw (13.75,0) node[nodelabel,above]{$B_r$};

\draw (0,-4.5) -- (2.5,-4.5) -- (2.5,-5.5) -- (0,-5.5) -- (0,-4.5);
\draw (1.25,-5.5) node[nodelabel,below]{$A_0$};

\draw (3.5,-4.5) -- (6,-4.5) -- (6,-5.5) -- (3.5,-5.5) -- (3.5,-4.5);
\draw (4.75,-5.5) node[nodelabel,below]{$A_1$};

\draw (4,-5)node{};
\draw (3.8,-5)node[right,nodelabel]{$a_1$};

\draw (7,-4.5) -- (9.5,-4.5) -- (9.5,-5.5) -- (7,-5.5) -- (7,-4.5);
\draw (8.25,-5.5) node[nodelabel,below]{$A_2$};

\draw (7.5,-5)node{};
\draw (7.3,-5)node[right,nodelabel]{$a_2$};

\draw[dotted] (10,-5) -- (12,-5);

\draw (12.5,-4.5) -- (15,-4.5) -- (15,-5.5) -- (12.5,-5.5) -- (12.5,-4.5);
\draw (13.75,-5.5) node[nodelabel,below]{$A_r$};

\draw (13,-5)node{};
\draw (12.8,-5)node[right,nodelabel]{$a_r$};

\end{tikzpicture}
\vspace*{-0.1in}
\caption{Non-removable edges incident at a vertex}
\label{fig:lovasz-vempala}
\end{figure}
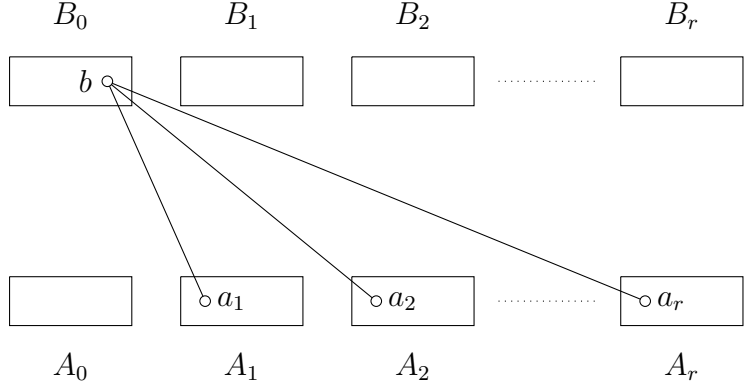

\begin{lem}
{\sc [The Lov{\'a}sz-Vempala Lemma]}
\label{lem:lovasz-vempala}
Let $H[A,B]$ denote a bipartite matching covered graph, and $b \in B$ denote a vertex
of degree~$d \geq 3$. Let $ba_1, ba_2, \dots, ba_d$ be the edges of~$H$
incident with~$b$. Assume that the edges $ba_1, ba_2, \dots, ba_r$ where $0< r \leq d$
are \mbox{non-removable}. Then there exist partitions $(A_0, A_1, \dots, A_r)$ of~$A$
and $(B_0,B_1, \dots, B_r)$ of~$B$, such that $b \in B_0$, and for $i \in \{1,2, \dots, r\}$:
(i) $|A_i| = |B_i|$, (ii) $a_i \in A_i$, and (iii)~$N(A_i) = B_i \cup \{b\}$;
in particular, $ba_i$ is the only edge between $B_0$ and $A_i$. \qed
\end{lem}

Observe that, as per the notation in the above lemma, if $ba_1$ and $ba_2$ are
\mbox{non-removable} edges, then the vertices $a_1$ and $a_2$ have no common neighbour
distinct from~$b$.
That is, there is no $4$-cycle containing edges $ba_1$ and $ba_2$ both.
This proves the following corollary of Lov{\'a}sz and Vempala \cite{love}.

\begin{cor}
\label{cor:quadrilateral-LV}
Let $H$ denote a bipartite matching covered graph, and $b$ denote a vertex
of degree three or more. If $e$ and $f$ are two edges incident at~$b$ which
lie in a $4$-cycle $Q$ then at least one of $e$ and $f$ is removable. \qed
\end{cor}

We conclude with an easy application of the Lov{\'a}sz-Vempala Lemma
in the context of \nb\ bricks.

\begin{cor}
\label{cor:application-of-LV}
Let $G$ be an $R$-brick, and let $H:=G-R$. Then for any
vertex~$b$, at most two edges of $\partial_H(b)$ are
non-removable in~$H$.
\end{cor}
\begin{proof}
We adopt Notation~\ref{Not:Rgraph-removable-doubleton};
assume without loss of generality that $b \in B$.
If $b$ has only two distinct neighbours in~$H$ then the assertion
is easily verified.
Now suppose that $b$ has at least three distinct neighbours in~$H$, and let
$d$ denote the degree of~$b$ in~$H$.

\smallskip
Suppose instead that there are $r \geq 3$ non-removable edges incident
with~$b$; we denote these as $ba_1, ba_2, \dots, ba_r$.
Then, by the Lov{\'a}sz-Vempala Lemma (\ref{lem:lovasz-vempala}),
there exist partitions $(A_0, A_1, \dots, A_r)$ of~$A$ and
$(B_0,B_1, \dots, B_r)$ of~$B$, such that $b \in B_0$, and
for $i \in \{1,2,\dots,r\}$: (i) $|A_i| = |B_i|$, (ii)~$a_i \in A_i$, and
(iii) $N_H(A_i) = B_i \cup \{b\}$. See Figure~\ref{fig:lovasz-vempala}.

\smallskip
Observe that, for $i \in \{1,2,\dots,r\}$,
every vertex of~$A_i$ is isolated in $H-(B_i \cup \{b\})$;
consequently, $B_i \cup \{b\}$ is a nontrivial barrier of~$H$.
Since $G$ is free of nontrivial barriers (by Theorem~\ref{thm:elp-bricks}),
adding the edges of~$R$
must kill each of these barriers.
In particular, $\alpha$ must have an end in each~$A_i$
for $i \in \{1,2,\dots,r\}$.
This is not possible, as $r \geq 3$; thus we have a contradiction.
This completes the proof of Corollary~\ref{cor:application-of-LV}.
\end{proof}

\subsection{Barriers and tight cuts}
\label{sec:barriers-tight-cuts}

We begin with a property of removable edges related to tight cuts
which is easily verified;
it holds for all matching covered graphs.

\begin{prop}
\label{prop:removable-edges-across-tight-cuts}
Let $G$ be a matching covered graph, and $\partial(X)$ a tight cut of~$G$,
and $e$ an edge of~$G[X]$. Then $e$ is removable in~$G/ \overline{X}$
if and only if~$e$ is removable in~$G$. \qed
\end{prop}

Let us revisit the notion of a barrier cut. If $S$ is a barrier of a
matching covered graph~$G$ and
$K$ is an odd component of~$G-S$ then $\partial(V(K))$ is a tight cut
of~$G$, and is referred to as a barrier cut.
In Sections \ref{sec:barriers-tight-cuts-bipmcg}
and \ref{sec:barriers-tight-cuts-nbmcg}, among other
things, we will see that every nontrivial tight cut of a bipartite or
of a \nb\ graph is a barrier cut.

\subsubsection{Bipartite graphs}
\label{sec:barriers-tight-cuts-bipmcg}

Suppose that $X$ is an odd subset of the
vertex set of a bipartite graph~$H[A,B]$.
Then, clearly one of the two sets $A \cap X$ and $B \cap X$
is larger than the other;
the larger of the two sets, denoted~$X_+$,
is called the {\it majority part of~$X$}; and
the smaller set, denoted~$X_-$,
is called the {\it minority part of~$X$}.

\smallskip
The following proposition is easily derived, and it provides a convenient way of visualizing tight cuts in bipartite
matching covered graphs. See Figure~\ref{fig:tight-cut-bipmcg}.

\begin{prop}
{\sc [Tight Cuts in Bipartite Graphs]}
\label{prop:tight-cut-bipmcg}
A cut~$\partial(X)$ of a bipartite matching covered graph~$H$ is tight if and only if the following hold:
\begin{enumerate}[(i)]
\item $|X|$ is odd and $|X_+| = |X_-| + 1$, consequently $|\overline{X}_+| = |\overline{X}_-| + 1$, and
\item there are no edges between~$X_-$ and $\overline{X}_-$. \qed
\end{enumerate}
\end{prop}

\begin{figure}[!ht]
\centering
\begin{tikzpicture}[scale=0.6]

\draw (0,0) -- (4,0) -- (4,-1) -- (0,-1) -- (0,0);
\draw (2,0) node[above,nodelabel]{$\overline{X}_+$};

\draw (0,-2.5) -- (3.2,-2.5) -- (3.2,-3.5) -- (0,-3.5) -- (0,-2.5);
\draw (1.6,-3.5) node[below,nodelabel]{$\overline{X}_-$};

\draw (5.2,-2.5) -- (9.2,-2.5) -- (9.2,-3.5) -- (5.2,-3.5) -- (5.2,-2.5);
\draw (7.2,-3.5) node[below,nodelabel]{$X_+$};

\draw (6,0) -- (9.2,0) -- (9.2,-1) -- (6,-1) -- (6,0);
\draw (7.6,0) node[above,nodelabel]{$X_-$};

\draw[ultra thin] (1,-0.5) -- (1,-3);
\draw[ultra thin] (1.7,-0.5) -- (1.7,-3);
\draw[ultra thin] (2.4,-0.5) -- (6.1,-3);
\draw[ultra thin] (3.1,-0.5) -- (6.8,-3);
\draw[ultra thin] (7.5,-3) -- (7.5,-0.5);
\draw[ultra thin] (8.2,-3) -- (8.2,-0.5);

\end{tikzpicture}
\vspace*{-0.2in}
\caption{Tight cuts in bipartite matching covered graphs}
\label{fig:tight-cut-bipmcg}
\end{figure}
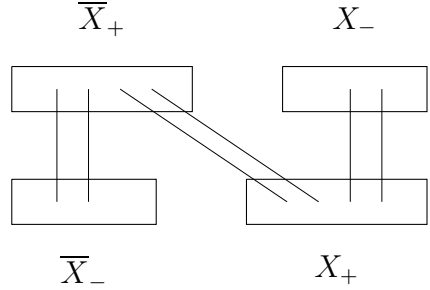

Observe that, in the above proposition,
$X_+$ and $\overline{X}_+$ are both barriers of~$H$.
It follows that every tight cut of a bipartite matching covered graph is a barrier cut.

\smallskip
Recall that, for a bipartite matching covered graph~$H[A,B]$, its maximal
barriers are precisely its color classes $A$ and $B$. Now let $S$ denote a
nontrivial barrier of~$H$ which is not maximal, and adjust notation so that
$S \subset B$. It may be inferred from Proposition~\ref{prop:tight-cut-bipmcg}
that $H-S$ has precisely $|S| - 1$ isolated vertices each of which is a member
of~$A$, and it has precisely one nontrivial odd component~$K$ which gives
rise to a nontrivial barrier cut of~$H$, namely $\partial(V(K))$.

\smallskip
Since braces are bipartite matching covered graphs
which are free of nontrivial tight cuts,
Proposition~\ref{prop:tight-cut-bipmcg} may be used to obtain the following
characterizations of braces.

\begin{prop}
{\sc [Characterizations of Braces]}
\label{prop:characterizations-of-braces}
Let $H[A,B]$ denote a bipartite graph of order six or more, where $|A| = |B|$.
Then the following statements are equivalent:
\begin{enumerate}[(i)]
\item $H$ is a brace,
\item $|N(S)| \geq |S|+2$ for every nonempty subset~$S$ of~$A$ such that $|S| < |A|-1$, and
\item $H-\{a_1,a_2,b_1,b_2\}$ has a perfect matching for any four distinct vertices $a_1,a_2 \in A$ and $b_1,b_2 \in B$. \qed
\end{enumerate}
\end{prop}

\subsubsection{Near-Bipartite graphs}
\label{sec:barriers-tight-cuts-nbmcg}

Let $G$ denote an \Rgraph.
We adopt Notation~\ref{Not:Rgraph-removable-doubleton}.
For an odd subset~$X$ of $V(G)$,
we define its {\it majority part}~$X_+$
and its {\it minority part}~$X_-$
by regarding it as a subset of~$V(H)$.

\smallskip
Observe that, if~$X$ is the shore of a tight cut in~$G$
then it is the shore of a tight cut in~$H$ as well.
This observation, coupled with Proposition~\ref{prop:tight-cut-bipmcg},
may be used to derive
the following characterization of tight cuts in \nb\ graphs.

\begin{prop}
{\sc [Tight Cuts in Near-bipartite Graphs]}
\label{prop:tight-cut-nbmcg}
A cut~$\partial(X)$ of an \mbox{$R$-graph~$G$}
is tight if and only if the following hold:
\begin{enumerate}[(i)]
\item $X$ is odd and $|X_+| = |X_-|+1$,
and consequently, $|\overline{X}_+| = |\overline{X}_-| + 1$,
\item there are no edges between $X_-$ and $\overline{X}_-$;
adjust notation so that $X_- \subset A$,
\item one of $\alpha$ and $\beta$ has both ends in a majority part;
adjust notation so that $\alpha$ has both ends in $\overline{X}_+$, and
\item $\beta$ has at least one end in~$\overline{X}_-$.
\end{enumerate}
Consequently, $X_+$ is a nontrivial barrier of~$G$.
Moreover, the $\partial(X)$-contraction
$G / X$ is \nb\ with removable doubleton~$R$,
whereas the $\partial(X)$-contraction $G/ \overline{X}$ is bipartite.
\end{prop}
\vspace*{-0.2in}
\begin{proof}
A simple counting argument shows that if all of the statements {\it (i)} to {\it (iv)} hold then $\partial(X)$ is indeed a tight cut of~$G$.
See Figure~\ref{fig:tight-cut-nbmcg}.
Now suppose that $\partial(X)$ is a tight cut;
as noted earlier, $\partial(X) - R$ is a tight cut of~$H$.
Thus {\it (i)} and {\it (ii)} follow immediately from
Proposition~\ref{prop:tight-cut-bipmcg}.
Adjust notation so that $X_- \subset A$.

\vspace*{-0.1in}
\begin{figure}[!ht]
\centering
\begin{tikzpicture}[scale=0.65]

\draw (0.5,-0.5)node{} -- (1.5,-0.5)node{};
\draw (1,-0.2)node[nodelabel]{$\alpha$};
\draw (0.5,-3)node{} -- (1.5,-3)node{};
\draw (1,-3.35)node[nodelabel]{$\beta$};

\draw (0,0.2) -- (4,0.2) -- (4,-1) -- (0,-1) -- (0,0.2);
\draw (2,0.2) node[above,nodelabel]{$\overline{X}_+$};

\draw (0,-2.5) -- (3.2,-2.5) -- (3.2,-3.7) -- (0,-3.7) -- (0,-2.5);
\draw (1.6,-3.7) node[below,nodelabel]{$\overline{X}_-$};

\draw (5.2,-2.5) -- (9.2,-2.5) -- (9.2,-3.7) -- (5.2,-3.7) -- (5.2,-2.5);
\draw (7.2,-3.7) node[below,nodelabel]{$X_+$};

\draw (6,0.2) -- (9.2,0.2) -- (9.2,-1) -- (6,-1) -- (6,0.2);
\draw (7.6,0.2) node[above,nodelabel]{$X_-$};

\draw[ultra thin] (2.4,-0.5) -- (6.1,-3);
\draw[ultra thin] (3.1,-0.5) -- (6.8,-3);
\draw (4.6,-5.6)node[nodelabel]{(a)};
\end{tikzpicture}
\hspace*{0.5in}
\begin{tikzpicture}[scale=0.65]

\draw (0.5,-0.5)node{} -- (1.5,-0.5)node{};
\draw (1,-0.2)node[nodelabel]{$\alpha$};
\draw (2.7,-3)node{} -- (5.7,-3)node{};
\draw (4.2,-3.35)node[nodelabel]{$\beta$};

\draw (0,0.2) -- (4,0.2) -- (4,-1) -- (0,-1) -- (0,0.2);
\draw (2,0.2) node[above,nodelabel]{$\overline{X}_+$};

\draw (0,-2.5) -- (3.2,-2.5) -- (3.2,-3.7) -- (0,-3.7) -- (0,-2.5);
\draw (1.6,-3.7) node[below,nodelabel]{$\overline{X}_-$};

\draw (5.2,-2.5) -- (9.2,-2.5) -- (9.2,-3.7) -- (5.2,-3.7) -- (5.2,-2.5);
\draw (7.2,-3.7) node[below,nodelabel]{$X_+$};

\draw (6,0.2) -- (9.2,0.2) -- (9.2,-1) -- (6,-1) -- (6,0.2);
\draw (7.6,0.2) node[above,nodelabel]{$X_-$};

\draw[ultra thin] (2.4,-0.5) -- (6.1,-3);
\draw[ultra thin] (3.1,-0.5) -- (6.8,-3);
\draw (4.6,-5.6)node[nodelabel]{(b)};
\end{tikzpicture}
\vspace*{-0.2in}
\caption{Tight cuts in \nb\ graphs}
\label{fig:tight-cut-nbmcg}
\end{figure}

As each perfect matching of~$G$ which
contains~$\alpha$ must also contain~$\beta$, we
infer that at most one of $\alpha$ and $\beta$
lies in~$\partial(X)$. Furthermore,
if $\alpha$ has both ends in~$X_-$,
and likewise,
if $\beta$ has both ends in~$\overline{X}_-$,
then a simple counting argument shows that
any perfect matching~$M$ of~$G$ containing $\alpha$~and~$\beta$
meets~$\partial(X)$ in at least three edges;
this is a contradiction.

\smallskip
The above observations imply that
at least one of~$\alpha$ and $\beta$ has both ends in a majority part;
this proves {\it (iii)}.
As in the statement, adjust notation so that
$\alpha$ has both ends in~$\overline{X}_+$.
Now, if $\beta$ has both ends in~$X_+$
then it is easily seen that $\alpha$ and $\beta$ are both inadmissible.
This proves {\it (iv)}.
Note that, either~$\beta$ has both ends in~$\overline{X}_-$ as shown
in Figure~\ref{fig:tight-cut-nbmcg}a,
or it has one end in~$\overline{X}_-$ and the other end in~$X_+$
as shown in Figure~\ref{fig:tight-cut-nbmcg}b.

\smallskip
Note that $X_+$ is a nontrivial barrier of~$G$, and that
$G/ \overline{X}$ is bipartite.
We let $G_1:=G / X$ denote the other $\partial(X)$-contraction.
Observe that $H_1 := H/X$ is bipartite and matching covered.
Furthermore, in~$G_1$, $\alpha$ has both ends in one color class of~$H_1$,
and likewise, $\beta$ has both ends in the other color class of~$H_1$;
this is true for each of the two cases shown in Figure~\ref{fig:tight-cut-nbmcg}.
Since $H_1=G_1 - R$,
we infer that $G_1$ is \nb\ with removable doubleton~$R$.
This completes the proof of Proposition~\ref{prop:tight-cut-nbmcg}.
\end{proof}

Recall that a \nbrick\ is a matching covered graph whose tight cut decomposition
yields exactly one brick.
The following is an immediate consequence of
Proposition~\ref{prop:tight-cut-nbmcg}.

\begin{cor}
\label{cor:brick-of-nbmcg-is-nb}
An $R$-graph~$G$ is a \nbrick,
and its unique brick is also \nb\ with removable doubleton~$R$.
\qed
\end{cor}

In other words, a \nb\ graph~$G$ is a \nbrick,
and its unique brick, say~$J$, inherits its removable doubletons.
The {\it rank} of~$G$,
denoted ${\sf rank}(G)$,
is the order of the unique brick of~$G$. That is,
${\sf rank}(G) := |V(J)|$.

\smallskip
Proposition~\ref{prop:tight-cut-nbmcg} shows that
every tight cut of a \nb\ graph
is a barrier cut.
Now, let $S$ denote a nontrivial barrier of an $R$-graph~$G$,
and adjust notation so that $S \subset B$. It may be inferred
from Proposition~\ref{prop:tight-cut-nbmcg}
that $G-S$ has precisely $|S|-1$ isolated vertices
each of which is a member of~$A$,
and it has precisely one nontrivial odd component~$K$
which yields a nontrivial tight cut of~$G$, namely~$\partial(V(K))$.
Thus there is a bijective correspondence
between the nontrivial barriers of~$G$ and its nontrivial tight cuts.

\vspace*{-0.1in}
\subsection{The Three Case Lemma}
\label{sec:three-case-lemma}

Recall that a removable edge~$e$ of a brick~$G$ is \binv\ if $G-e$ is a \nbrick.
In this section, we will discuss a lemma of
Carvalho, Lucchesi and Murty \cite{clm02b}
that pertains to the structure of such {\nbrick}s, that is, those which are obtained
from a brick by deleting a single edge. This lemma is used extensively in their
works \cite{clm02a,clm06,clm12}, and it will play a vital role in
the proof of Theorem~\ref{thm:Rthin-nb-bricks}.

\smallskip
We will restrict ourselves to the case in which $G$ is an \Rbrick\ and $e$
is \Rcomp.
(By Proposition~\ref{prop:Rcompatible-is-binvariant}, $e$ is \binv.)
We adopt Notation~\ref{Not:Rgraph-removable-doubleton}.
As the name of the lemma suggests, there will be three cases, depending
on which we say that the `index' of~$e$ is zero, one or two.
In particular, the index of~$e$ (defined later) will be zero if~$G-e$ is a brick.

\smallskip
Now consider the situation in which~$G-e$ is not a brick; that is,
$G-e$ has a nontrivial tight cut. By Proposition~\ref{prop:tight-cut-nbmcg},
$G-e$ has a nontrivial barrier;
let~$S$ be such a barrier which is also maximal, and adjust notation
so that $S \subset B$. We let~$I$ denote the set of isolated vertices
of~$(G-e)-S$; note that $I \subset A$. Since $G$ itself is free
of nontrivial barriers, we infer that one end of~$e$ lies in~$I$ and its other
end lies in~$B-S$. This observation, coupled with the
Canonical Partition Theorem (\ref{thm:canonical-partition})
and the fact that $e$
has only two ends, implies that $G-e$ has at most two maximal
nontrivial barriers; furthermore, if it is has two such barriers then
one is a subset of~$A$ and the other is a subset of~$B$. 

\smallskip
The {\it index}
of~$e$, denoted ${\sf index}(e)$, is the number
of maximal nontrivial barriers in~$G-e$.
It follows from the preceding paragraph that
the index of~$e$ is either zero, one or two; and these
form the three cases.
This is the gist of the lemma; apart from this,
it provides further information in the index two case which is especially
useful to us.
We now state the Three Case Lemma \cite{clm06},
as it is applicable to an {\Rcomp}\ edge of an \Rbrick;
see
Figures \ref{fig:three-case-lemma-index-one}
and \ref{fig:three-case-lemma-index-two}.
(The reason for the asymmetry in our notation in Case~(2)
is discussed in Section~\ref{sec:index-two}.)

\begin{lem}
{\sc [The Three Case Lemma]}
\label{lem:three-case}
Let $G$ be an \Rbrick, and $e$ an \Rcomp\ edge.
Let $H[A,B]:=G-R$.
Then one of the following three alternatives holds:
\begin{enumerate}
\item[{\rm (0)}] $G-e$ is a brick.
\item[{\rm (1)}] $G-e$ has only one maximal nontrivial barrier,
say~$S$. Adjust notation so that $S \subset B$.
Let $I$ denote the set of isolated vertices of~$(G-e)-S$.
Then $I \subset A$, and $e$ has one end in~$I$ and other end in~$B-S$.
\item[{\rm (2)}] $G-e$ has two maximal nontrivial barriers,
say~$S_1$ and $S^*_2$. Adjust notation so that $S_1 \subset B$
and $S^*_2 \subset A$.
Let $I_1$ denote the set of isolated vertices of~$(G-e)-S_1$,
and $I^*_2$ the set of isolated vertices of~$(G-e)-S^*_2$.
Then the following hold:
\begin{enumerate}[(i)]
\item $I_1 \subset A$ and $I^*_2 \subset B$;
\item $e$ has one end in~$I_1 - S^*_2$ and other end in~$I^*_2 - S_1$;
\item $S_2 := S^*_2 - I_1$ is the unique
maximal nontrivial barrier of \mbox{$(G-e)/X_1$}, where $X_1 := S_1 \cup I_1$;
furthermore, $S_2$ is a barrier of~$G-e$ as well,
and $I_2 := I^*_2 - S_1$ is the set of isolated vertices of $(G-e)-S_2$.~\qed
\end{enumerate}
\end{enumerate}
\end{lem}

Now, let $e$ denote an \Rcomp\ edge of an \Rbrick~$G$.
By the {\it rank} of~$e$,
denoted ${\sf rank}(e)$, we mean
the rank of the \Rgraph~$G-e$. That is, ${\sf rank}(e):={\sf rank}(G-e)$.
Recall that $e$ is \Rthin\ if the retract of~$G-e$ is a brick.
In particular, every \Rcomp\ edge of index zero is \Rthin,
and these are the only edges whose rank equals $n:=|V(G)|$.

\smallskip
In what follows, we will
further discuss the cases in which the index of~$e$ is either one or
two; in each case, we shall relate the rank of~$e$
with the information provided by the Three Case Lemma,
and we examine the conditions under which~$e$ is \Rthin.
These discussions are especially relevant to
Section~\ref{sec:equal-rank-lemma}.

\smallskip
We adopt Notation~\ref{Not:Rgraph-removable-doubleton}.
Let $y$~and~$z$ denote the ends of~$e$ such that $y \in A$ and $z \in B$.
Note that, if $y$ is cubic, then the two neighbours of~$y$ in~$G-e$
constitute a barrier of~$G-e$; a similar statement holds for~$z$.
It follows that if both ends of~$e$ are cubic then the index of~$e$ is two.

\subsubsection{Index one}
\label{sec:index-one}

Suppose that the index of~$e$ is one.
As in case (1) of the Three Case Lemma,
we let $S$ denote the unique maximal nontrivial
barrier of~$G-e$, and $I$ the set of isolated vertices of~$(G-e)-S$.
Note that $|I| = |S|-1$.
We adjust notation so that $S \subset B$ and $I \subset A$;
see Figure~\ref{fig:three-case-lemma-index-one}.
Observe that $y \in I$ and $z \in B-S$.

\begin{figure}[!ht]
\centering
\begin{tikzpicture}[scale=0.6, thick]
\draw (-2,-0.5) node[nodelabel]{$A$};
\draw (-2,-3) node[nodelabel]{$B$};

\draw (0,0) -- (4,0) -- (4,-1) -- (0,-1) -- (0,0);
\draw (2,-0.5) node[above,nodelabel]{$A-I$};

\draw (0,-2.5) -- (3.2,-2.5) -- (3.2,-3.5) -- (0,-3.5) -- (0,-2.5);
\draw (1.6,-3) node[below,nodelabel]{$B-S$};

\draw (5.2,-2.5) -- (9.2,-2.5) -- (9.2,-3.5) -- (5.2,-3.5) -- (5.2,-2.5);
\draw (7.2,-3.5) node[below,nodelabel]{$S$};

\draw (6,0) -- (9.2,0) -- (9.2,-1) -- (6,-1) -- (6,0);
\draw (7.6,0) node[above,nodelabel]{$I$};

\draw (2.7,-3) node{}node[left,nodelabel]{$z$};
\draw (6.5,-0.5) node{}node[right,nodelabel]{$y$};
\end{tikzpicture}
\vspace*{-0.2in}
\caption{An \Rcomp\ edge of index one}
\label{fig:three-case-lemma-index-one}
\end{figure}

In this case, $G-e$ has a unique nontrivial tight cut~$\partial(X)$,
where $X:=S \cup I$.
Consequently, $(G-e)/X$ is the brick of~$G-e$,
and the rank of~$e$ is $|V(G) - X|+1$.
Furthermore, $e$ is \Rthin\ if and only if
$|S|=2$; and in this case, $y$ is cubic,
$N(y)=S \cup \{z\}$, and ${\sf rank}(e)=n-2$.

\subsubsection{Index two}
\label{sec:index-two}

Suppose that the index of~$e$ is two.
As in case (2) of the Three Case Lemma,
we let $S_1$ denote one of the two maximal nontrivial barriers of~$G-e$,
and $I_1$ the set of isolated vertices of~$(G-e)-S_1$, adjusting notation
so that $S_1 \subset B$ and $I_1 \subset A$.
Note that $|I_1| = |S_1| - 1$ and that $y \in I_1$;
see Figure~\ref{fig:three-case-lemma-index-two}.

\smallskip
Now, let $S^*_2$ denote the unique maximal nontrivial barrier of~$G-e$
which is a subset of~$A$, and $I^*_2$ the set of isolated vertices
of~$(G-e)-S^*_2$. 
As in the index one case (see Figure~\ref{fig:three-case-lemma-index-one}),
we would like to break~$V(G)$ into disjoint subsets in order to be able
to compute the rank of~$e$.
However, this is complicated by the possibility that $S^*_2 \cap I_1$ may be
nonempty.
This explains the asymmetry in our notation in case (2).
Fortunately, it turns out that $S_2:=S^*_2 - I_1$ is the only maximal nontrivial
barrier of~$(G-e)/X_1$, where $X_1 := S_1 \cup I_1$.
Furthermore, $S_2$ is a barrier of~$G-e$ as well, and $I_2 := I^*_2-S_1$
is the set of isolated vertices of~$(G-e)-S_2$.
Note that $|I_2| = |S_2|-1$ and that $z \in I_2$;
see Figure~\ref{fig:three-case-lemma-index-two}.
We let $X_2:=S_2 \cup I_2$.

\begin{figure}[!ht]
\centering
\begin{tikzpicture}[scale=0.6, thick]
\draw (-2,-0.5) node[nodelabel]{$A$};
\draw (-2,-3) node[nodelabel]{$B$};

\draw (0,0) -- (3,0) -- (3,-1) -- (0,-1) -- (0,0);
\draw (1.5,0) node[above,nodelabel]{$S_2$};

\draw (0,-2.5) -- (2,-2.5) -- (2,-3.5) -- (0,-3.5) -- (0,-2.5);
\draw (1,-3.5) node[below,nodelabel]{$I_2$};

\draw (4,0) -- (8,0) -- (8,-1) -- (4,-1) -- (4,0);

\draw (4,-2.5) -- (8,-2.5) -- (8,-3.5) -- (4,-3.5) -- (4,-2.5);

\draw (9,-2.5) -- (13,-2.5) -- (13,-3.5) -- (9,-3.5) -- (9,-2.5);
\draw (11,-3.5) node[below,nodelabel]{$S_1$};

\draw (10,0) -- (13,0) -- (13,-1) -- (10,-1) -- (10,0);
\draw (11.5,0) node[above,nodelabel]{$I_1$};

\draw (1.5,-3) node{}node[left,nodelabel]{$z$};
\draw (10.5,-0.5) node{}node[right,nodelabel]{$y$};
\end{tikzpicture}
\vspace*{-0.2in}
\caption{An \Rcomp\ edge of index two}
\label{fig:three-case-lemma-index-two}
\end{figure}

In this case, $\partial(X_1)$ and $\partial(X_2)$ are both tight cuts of~$G-e$;
more importantly, $\partial(X_2)$ is the unique tight cut
of~$(G-e)/X_1$.
Consequently, $((G-e)/X_1)/X_2$ is the brick of~$G-e$, and the rank of~$e$
is~$|V(G) - X_1 - X_2| + 2$.

\smallskip
Furthermore, $e$ is \Rthin\ if and only
if $|S_1|=2=|S_2|$; and in this case, $y$ and $z$ are both cubic,
$N(y)=S_1 \cup \{z\}$ and $N(z) = S_2 \cup \{y\}$, and
${\sf rank}(e)=n-4$; also, by switching the roles of~$S_1$~and~$S^*_2$,
we infer that $|S^*_2|=2$.

\subsubsection{Index and Rank of an \Rthin\ Edge}
\label{sec:index-rank-Rthin}

The following characterization of \Rthin\ edges
is immediate from our discussion in the previous two sections.

\begin{prop}
{\sc [Characterization of \Rthin\ Edges in terms of Barriers]}
\label{prop:equivalent-definition-Rthin}
An \Rcomp\ edge~$e$ of an \Rbrick~$G$ is \Rthin\ if and only if
every barrier of~$G-e$ has at most two vertices. \qed
\end{prop}

In summary, if the index of~$e$ is zero then $e$ is thin and its rank is $n:=|V(G)|$.
If the index of~$e$ is one then ${\sf rank}(e) \leq n-2$, and equality
holds if and only if $e$ is thin.
Likewise, if the index of~$e$ is two then ${\sf rank}(e) \leq n-4$,
and equality holds if and only if $e$ is thin.

\smallskip
The following proposition gives an equivalent definition of index
of an \Rthin\ edge.

\begin{prop}
\label{prop:notions-of-index-coincide}
Let $G$ be an \Rbrick, and $e$ an \Rthin\ edge.
Then the following statements hold:
\begin{enumerate}[(i)]
\item ${\sf index}(e) = 0$ if and only if both ends of $e$ have degree
four or more in~$G$;
\item ${\sf index}(e) = 1$ if and only if exactly one end of~$e$ has degree
three in~$G$; and
\item ${\sf index}(e) = 2$ if and only if both ends of $e$ have degree three in~$G$ and
$e$~does not lie in a triangle.
\end{enumerate}
\end{prop}
\begin{proof}
We note that ${\sf index}(e) = 0$ if and only if $G-e$
is free of nontrivial barriers, that is, $G-e$ is a brick;
and since $e$ is a thin edge, the latter holds if and only if
both ends of $e$ have degree four or more in~$G$.
This proves {\it (i)}.

\smallskip
Let $n:=|V(G)|$.
We note that ${\sf index}(e) = 1$ if and only if ${\sf rank}(e) = n-2$;
and since $e$ is a thin edge,
the latter holds if and only if exactly one end of~$e$ has degree three in~$G$.

\smallskip
Now suppose that ${\sf index}(e) = 2$, whence ${\sf rank}(e) = n-4$,
and consequently, both ends of $e$ have degree three in~$G$.
Conversely, if both ends of $e$ have degree three in~$G$ then $G-e$
has two nontrivial barriers which lie in different color classes of~$(G-e)-R$,
and thus ${\sf index}(e) = 2$; furthermore, since $e$ is \Rcomp,
neither end of $e$ is incident with an edge of~$R$ and thus $e$ does
not lie in a triangle.
\end{proof}

\section{Generating Near-Bipartite Bricks}
\label{sec:generating-nb-bricks}

In this section, our goal is to prove the \Rthin\ Edge Theorem (\ref{thm:Rthin-nb-bricks}).
In fact, we will prove a stronger result, as described below.

\smallskip
Let $G$ be an \Rbrick\ distinct from $K_4$ and $\overline{C_6}$.
Then, by Theorem~\ref{thm:clm-Rcompatible-nb-bricks}
of Carvalho et al., $G$ has an \Rcomp\ edge; let~$e$ be any such edge.
Recall from Section~\ref{sec:three-case-lemma}
that there are two parameters associated with~$e$:
the rank of~$e$ is the order of the unique brick of~$G-e$; and,
the index of~$e$ is the number of maximal nontrivial barriers of~$G-e$, which
by the Three Case Lemma (\ref{lem:three-case})
is either zero, one or two.
Using these parameters, we may state our stronger theorem as follows.

\begin{thm}\label{thm:rank-plus-index}
Let $G$ be an \Rbrick\ which is distinct from $K_4$~and~$\overline{C_6}$,
and let $e$ denote an \Rcomp\ edge of~$G$.
Then one of the following alternatives hold:
\begin{itemize}
\item either $e$ is \Rthin,
\item or there exists another \Rcomp\ edge~$f$ such that:
\begin{enumerate}[(i)]
\item $f$ has an end each of whose neighbours in~$G-e$ lies in a barrier of~$G-e$, and
\item ${\sf rank}(f) + {\sf index}(f) > {\sf rank}(e) + {\sf index}(e)$.
\end{enumerate}
\end{itemize}
\end{thm}

Since the rank and index are bounded quantities,
the above theorem
immediately implies the \Rthin\ Edge Theorem (\ref{thm:Rthin-nb-bricks}).
Our proof uses tools from the work of Carvalho et al. \cite{clm06}, and the
overall approach is inspired by their proof of the
Thin Edge Theorem~(\ref{thm:clm-thin-bricks}).

\smallskip
The following proposition shows that condition~{\it (ii)}
in Theorem~\ref{thm:rank-plus-index} is implied by a weaker
condition involving only the rank function.

\begin{prop}
\label{prop:greater-rank-suffices}
Suppose that $e$ and $f$ denote two \Rcomp\ edges of an \Rbrick~$G$.
If ${\sf rank}(f) > {\sf rank}(e)$ then
\mbox{${\sf rank}(f) + {\sf index}(f) > {\sf rank}(e) + {\sf index}(e)$}.
\end{prop}
\begin{proof}
Note that, since the rank of an edge is even, ${\sf rank}(f) > {\sf rank}(e)+1$.
As the index of an edge is either zero, one or two, we only need to examine
the case in which ${\sf index}(e)=2$ and ${\sf index}(f) =0$. However, in this
case, ${\sf rank}(f) = n$ and ${\sf rank}(e) \leq n-4$ where $n:=|V(G)|$,
and thus the conclusion holds.
\end{proof}

\smallskip
In the statement of Theorem~\ref{thm:rank-plus-index}, if the given
\Rcomp\ edge~$e$ is thin, then the assertion is vacuously true.
Thus, in its proof, we may assume that $e$ is not thin. It then follows from
Proposition~\ref{prop:equivalent-definition-Rthin} that $G-e$ has a barrier
with three or more vertices; let $S$ be such a barrier.
In the next section, we introduce the notion of a candidate edge
(relative to~$e$~and~$S$) which, as we
will see, is an \Rcomp\ edge that satisfies condition~{\it (i)}
in the statement of Theorem~\ref{thm:rank-plus-index},
and has rank at least that of~$e$.

\subsection{The candidate set \candidateset{e}{S}}

Let $G$ be an \Rbrick, and let $e:=yz$ denote an \Rcomp\ edge
which is not thin.
We first set up some notation and conventions which are used
in the rest of this paper.

\begin{Not}
\label{Not:Rcompatible-in-Rbrick}
We shall denote by $H[A,B]$ the underlying bipartite graph $G-R$.
We let $R:=\{\alpha,\beta\}$;
and we adopt the convention that $\alpha:= a_1a_2$ has both ends in~$A$,
whereas $\beta:=b_1b_2$ has both ends in~$B$.
Adjust notation so that $y \in A$ and $z \in B$.
\end{Not}

The reader is advised to review
Section~\ref{sec:barriers-tight-cuts-nbmcg}
before proceeding further.
Let $S$ be a barrier of~$G-e$ such that $|S| \geq 3$,
and $I$ the set of isolated vertices of~$(G-e)-S$.
Adjust notation so that $S \subset B$ and $I \subset A$,
as shown in Figure~\ref{fig:candidate-set}a.
Observe that $X:=S \cup I$ is the shore of a tight cut in~$G-e$,
as well as in~$H-e$.
By Proposition~\ref{prop:tight-cut-nbmcg},
$\alpha$ has both ends in~$A-I$; whereas $\beta$ either
has both ends in~$B-S$, or it has one end in~$B-S$ and another in~$S$.
We denote the bipartite matching covered graph
$$(H-e)/\overline{X} \rightarrow \overline{x}$$ by \bipartitebarrier{e}{S}.
Note that its color classes are the sets $I \cup \{\overline{x}\}$ and $S$;
see Figure~\ref{fig:candidate-set}b.

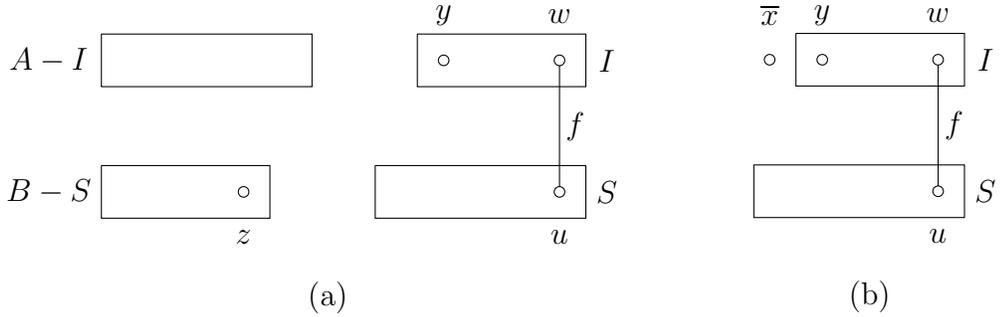
\begin{figure}[!ht]
\centering
\begin{tikzpicture}[scale=0.7]
\draw (8.7,-0.5) -- (8.7,-3);
\draw (8.7,0.35) node[nodelabel]{$w$};
\draw (8.7,-3.85) node[nodelabel]{$u$};
\draw (9,-1.75)node[nodelabel]{$f$};
\draw (8.7,-0.5)node{};
\draw (8.7,-3)node{};

\draw (0,0) -- (4,0) -- (4,-1) -- (0,-1) -- (0,0);
\draw (-1,-0.5) node[nodelabel]{$A-I$};

\draw (0,-2.5) -- (3.2,-2.5) -- (3.2,-3.5) -- (0,-3.5) -- (0,-2.5);
\draw (-1,-3) node[nodelabel]{$B-S$};

\draw (5.2,-2.5) -- (9.2,-2.5) -- (9.2,-3.5) -- (5.2,-3.5) -- (5.2,-2.5);
\draw (9.6,-3) node[nodelabel]{$S$};

\draw (6,0) -- (9.2,0) -- (9.2,-1) -- (6,-1) -- (6,0);
\draw (9.6,-0.5) node[nodelabel]{$I$};

\draw (2.7,-3) node{};
\draw (2.7,-3.85) node[nodelabel]{$z$};
\draw (6.5,-0.5) node{};
\draw (6.5,0.35) node[nodelabel]{$y$};

\draw (4.3,-5)node[nodelabel]{(a)};
\end{tikzpicture}
\hspace*{0.4in}
\begin{tikzpicture}[scale=0.7]
\draw (8.7,-0.5) -- (8.7,-3);
\draw (8.7,0.35) node[nodelabel]{$w$};
\draw (8.7,-3.85) node[nodelabel]{$u$};
\draw (9,-1.75)node[nodelabel]{$f$};
\draw (8.7,-0.5)node{};
\draw (8.7,-3)node{};

\draw (5.5,-0.5)node{};
\draw (5.5,0.35)node[nodelabel]{$\overline{x}$};

\draw (5.2,-2.5) -- (9.2,-2.5) -- (9.2,-3.5) -- (5.2,-3.5) -- (5.2,-2.5);
\draw (9.6,-3) node[nodelabel]{$S$};

\draw (6,0) -- (9.2,0) -- (9.2,-1) -- (6,-1) -- (6,0);
\draw (9.6,-0.5) node[nodelabel]{$I$};

\draw (6.5,-0.5) node{};
\draw (6.5,0.35) node[nodelabel]{$y$};
\draw (7.4,-5)node[nodelabel]{(b)};
\end{tikzpicture}
\vspace*{-0.4in}
\caption{(a) $S$ is a barrier of $G-e$ such that $|S| \geq 3$ ; (b) the bipartite graph~\bipartitebarrier{e}{S}}
\label{fig:candidate-set}
\end{figure}

\begin{Def}
{\sc [The Candidate Set \candidateset{e}{S}]}
\label{Def:candidate-set}
We denote by \candidateset{e}{S} the set of those removable edges
of~\bipartitebarrier{e}{S} which
are not incident with the contraction vertex~$\overline{x}$, and we refer to it
as the candidate set (relative to $e$ and the barrier~$S$ of~$G-e$), and each member
of \candidateset{e}{S} is called a candidate edge.
\end{Def}

We remark that Carvalho et al. \cite{clm06} used a similar notion. 
Since their work concerns general bricks (that is, not just near-bipartite ones),
they consider the graph \mbox{$(G-e)/\overline{X} \rightarrow \overline{x}$}
and its removable edges which are not incident with the contraction vertex.
See Lemma~23 and Theorem~24 in \cite{clm06}.

\smallskip
Now, let $f:=uw$ denote a member of the candidate set \candidateset{e}{S},
as shown in Figure~\ref{fig:candidate-set}b.
The end~$w$ of~$f$ lies in~$I$, and all of
the neighbours of~$w$, in~$G-e$, lie in the barrier~$S$;
consequently, $f$ satisfies condition~{\it (i)}, Theorem~\ref{thm:rank-plus-index}.
It should be noted that $e$ and $f$ are adjacent if and only if
$w$ is the same as~$y$.
We now show that $f$ is an \Rcomp\ edge and it has rank at least that of~$e$.
The argument pertaining to ranks is the same as that in \cite[Lemma~26]{clm06}.

\begin{prop}
{\sc [Properties of Candidate Edges]}
\label{prop:edges-of-candidate-set}
Every member of \candidateset{e}{S}
is an \Rcomp\ edge of~$G-e$, and of~$G$,
and has rank at least that of~$e$.
Conversely, each \Rcomp\ edge of~$G-e$, which is incident with
a vertex of~$I$, is a member of \candidateset{e}{S}.
\end{prop}
\begin{proof}
Let $f$ be any member of \candidateset{e}{S}, as shown in
Figure~\ref{fig:candidate-set}b.
We will use Proposition~\ref{prop:removable-edges-across-tight-cuts}
to show that $f$ is \Rcomp\ in~$G-e$.

\smallskip
Observe that \bipartitebarrier{e}{S} is one of the $C$-contractions of~$H-e$,
where $C:=\partial(X)-e-R$ is a tight cut.
Since $f$ is removable in \bipartitebarrier{e}{S}
and $f \notin C$,
Proposition~\ref{prop:removable-edges-across-tight-cuts} implies that
$f$ is removable in~$H-e$ as well.
A similar argument shows that $f$ is removable in~$G-e$.
Thus, $f$ is \Rcomp\ in $G-e$; the exchange property
(Proposition~\ref{prop:exchange-property-Rcompatible-nbmcg})
implies that $f$ is \Rcomp\ in~$G$ as well.

\smallskip
Note that since both ends of~$f$ are in the bipartite shore~$X$, the brick
of~$G-e-f$ is the same as the brick of~$G-e$. In particular,
${\sf rank}(G-e-f) = {\sf rank}(G-e)$. On the other hand, note that if~$D$
is any tight cut of~$G-f$ then $D-e$ is a tight cut of~$G-e-f$, whence
${\sf rank}(G-f) \geq {\sf rank}(G-e-f)$. Thus ${\sf rank}(f) \geq {\sf rank}(e)$.
This proves the first statement.

\smallskip
Now suppose that $f$ is an \Rcomp\ edge of~$G-e$ which is incident
at some vertex of~$I$.
In particular, $H-e-f$ is matching covered; that is, $f$~is removable
in~$H-e$.
By Proposition~\ref{prop:removable-edges-across-tight-cuts},
$f$ is removable in~\bipartitebarrier{e}{S}.
This completes the proof of Proposition~\ref{prop:edges-of-candidate-set}.
\end{proof}

In summary, we have shown that every candidate edge is \Rcomp;
furthermore, it satisfies condition~{\it (i)}, Theorem~\ref{thm:rank-plus-index};
and it has rank at least that of~$e$.

\smallskip
The following property of candidate sets will be useful in dealing with
those nontrivial
barriers of~$G-e$ which are not maximal.

\begin{cor}
\label{cor:candidate-containment-property}
Let $S^*$ be any barrier of~$G-e$. If $S \subset S^*$
then \candidateset{e}{S} $\subset$ \candidateset{e}{S^*}.
\end{cor}
\begin{proof}
Let $f$ be a member of~\candidateset{e}{S}.
Then $f$ is incident with some vertex of~$I$, say~$w$.
Note that $w$ also lies in~$I^*$ which denotes the set of isolated vertices
of~$(G-e)-S^*$.

\smallskip
As $f$ is a member of \candidateset{e}{S},
Proposition~\ref{prop:edges-of-candidate-set}
implies that $f$ is \Rcomp\ in~$G-e$.
Consequently, since $f$ is incident at $w \in I^*$, the last assertion
of Proposition~\ref{prop:edges-of-candidate-set},
with $S^*$ playing the role of~$S$,
implies that $f$ is a member of \candidateset{e}{S^*}.
Thus \candidateset{e}{S} $\subset$ \candidateset{e}{S^*}.
\end{proof}

Now, we will prove two lemmas; each of which gives an upper bound on the
number of non-removable edges incident at a vertex of
the bipartite graph~\bipartitebarrier{e}{S},
which is distinct from the contraction vertex~$\overline{x}$.
Both of them are easy
applications of the Lov{\'a}sz-Vempala Lemma (\ref{lem:lovasz-vempala});
we will use arguments similar to those in the proof of
Corollary~\ref{cor:application-of-LV}.

\begin{lem}
\label{lem:non-removable-at-S}
Let $u$ denote a vertex of~$S$ which has degree three or more in
\bipartitebarrier{e}{S}.
Then at most two edges of~$\partial(u)-\beta$ are non-removable in
\bipartitebarrier{e}{S}.
Furthermore, if precisely two edges of~$\partial(u) - \beta$ are non-removable
in \bipartitebarrier{e}{S}
and if vertices $u$ and $\overline{x}$ are adjacent then the
edge~$u\overline{x}$ is non-removable in \bipartitebarrier{e}{S}.
\end{lem}
\begin{proof}
Assume that there are $k \geq 1$ non-removable edges incident with the vertex~$u$,
namely, $uw_1, uw_2, \dots, uw_k$. Then, by Lemma~\ref{lem:lovasz-vempala},
there exist partitions $(A_0, A_1, \dots, A_k)$ of $I \cup \{\overline{x}\}$,
and $(B_0, B_1, \dots, B_k)$ of~$S$, such that $u \in B_0$, and
for $j \in \{1, 2, \dots, k\}$: \mbox{(i)~$|A_j|=|B_j|$}, (ii)~$w_j \in A_j$
and (iii) $N(A_j) = B_j \cup \{u\}$. See Figure~\ref{fig:non-removable-at-S}.

\begin{figure}[!ht]
\centering
\begin{tikzpicture}[scale=0.7]
\draw[thin] (2,-5) -- (4,-0.5);
\draw[thin] (2,-5) -- (7.5,-0.5);
\draw[thin] (2,-5) -- (13,-0.5);

\draw (-2,-0.5)node[nodelabel]{$I \cup \{\overline{x}\}$};
\draw (-2,-5)node[nodelabel]{$S$};

\draw (0,0) -- (2.5,0) -- (2.5,-1) -- (0,-1) -- (0,0);
\draw (1.25,0) node[nodelabel,above]{$A_0$};

\draw (2,-5)node{};
\draw (2.2,-5)node[left,nodelabel]{$u$};

\draw (3.5,0) -- (6,0) -- (6,-1) -- (3.5,-1) -- (3.5,0);
\draw (4.75,0) node[nodelabel,above]{$A_1$};

\draw (7,0) -- (9.5,0) -- (9.5,-1) -- (7,-1) -- (7,0);
\draw (8.25,0) node[nodelabel,above]{$A_2$};

\draw[dotted] (10,-0.5) -- (12,-0.5);

\draw (12.5,0) -- (15,0) -- (15,-1) -- (12.5,-1) -- (12.5,0);
\draw (13.75,0) node[nodelabel,above]{$A_k$};

\draw (0,-4.5) -- (2.5,-4.5) -- (2.5,-5.5) -- (0,-5.5) -- (0,-4.5);
\draw (1.25,-5.5) node[nodelabel,below]{$B_0$};

\draw (3.5,-4.5) -- (6,-4.5) -- (6,-5.5) -- (3.5,-5.5) -- (3.5,-4.5);
\draw (4.75,-5.5) node[nodelabel,below]{$B_1$};

\draw (4,-0.5)node{};
\draw (3.8,-0.5)node[right,nodelabel]{$w_1$};

\draw (7,-4.5) -- (9.5,-4.5) -- (9.5,-5.5) -- (7,-5.5) -- (7,-4.5);
\draw (8.25,-5.5) node[nodelabel,below]{$B_2$};

\draw (7.5,-0.5)node{};
\draw (7.3,-0.5)node[right,nodelabel]{$w_2$};

\draw[dotted] (10,-5) -- (12,-5);

\draw (12.5,-4.5) -- (15,-4.5) -- (15,-5.5) -- (12.5,-5.5) -- (12.5,-4.5);
\draw (13.75,-5.5) node[nodelabel,below]{$B_k$};

\draw (13,-0.5)node{};
\draw (12.8,-0.5)node[right,nodelabel]{$w_k$};

\end{tikzpicture}
\vspace*{-0.2in}
\caption{Illustration for Lemma~\ref{lem:non-removable-at-S}}
\label{fig:non-removable-at-S}
\end{figure}
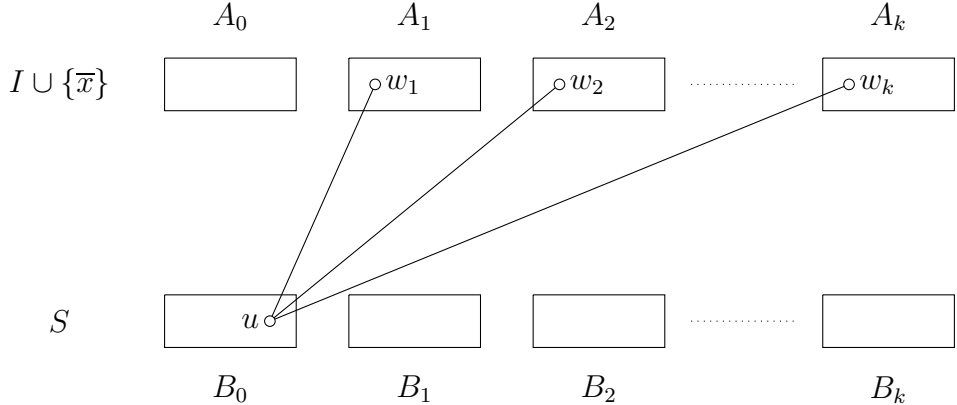

\smallskip
For $1 \leq j \leq k$, note that $B_j \cup \{u\}$ is a barrier of
\bipartitebarrier{e}{S}. Moreover,
if the set~$A_j$ contains neither the contraction vertex~$\overline{x}$ nor
the end~\vertexa~of~$e$, then $B_j \cup \{u\}$ is a barrier of~$G$ itself,
which is not possible as $G$ is a brick. We thus arrive at the conclusion that
$k \leq 2$, which proves the first part of the assertion.

\smallskip
Now consider the case when $k=2$. It follows from the above argument that
one of the vertices~\vertexa~and $\overline{x}$ lies in the set~$A_1$, whereas
the other vertex lies in the set~$A_2$. Adjust notation so that~\vertexa~$\in A_1$
and $\overline{x} \in A_2$.
Observe that if $u$ and $\overline{x}$ are adjacent, then $u\overline{x}$ is
the unique edge between $B_0$ and $A_2$, and it is non-removable
in
\bipartitebarrier{e}{S} by assumption. This completes the proof of
Lemma~\ref{lem:non-removable-at-S}.
\end{proof}

Now we turn to the examination of non-removable edges of
\bipartitebarrier{e}{S}
incident with vertices in~$I$. The proof is similar to that of
Lemma~\ref{lem:non-removable-at-S},
except that the roles of the color classes
$S$ and $I \cup \{\overline{x}\}$ are interchanged.

\begin{lem}
\label{lem:non-removable-at-I}
Let $w$ denote a vertex of~$I$ which has degree three or more in
\bipartitebarrier{e}{S}.
Then at most two edges of~$\partial(w)-e$ are non-removable in
\bipartitebarrier{e}{S}.
Furthermore, if precisely two edges of~$\partial(w)-e$ are non-removable in
\bipartitebarrier{e}{S}
then the following hold:
\begin{enumerate}[(i)]
\item an end of~$\beta$ lies in~$S$; adjust notation so that $b_1 \in S$,
\item in \bipartitebarrier{e}{S},
the vertices $b_1$ and $\overline{x}$ are nonadjacent,
\item if $b_1$ and $w$ are adjacent then the edge~$b_1w$ is
non-removable in \bipartitebarrier{e}{S}, and
\item $w$ is distinct from the end \vertexa~of~$e$.
\end{enumerate}
\end{lem}
\begin{proof}
Suppose that there exist $k \geq 1$ non-removable edges incident at the vertex~$w$,
namely, $wu_1, wu_2, \dots ,wu_k$.
Then, by Lemma~\ref{lem:lovasz-vempala}, there exist partitions
$(A_0, A_1, \dots, A_k)$ of the color class~$I \cup \{\overline{x}\}$,
and $(B_0, B_1, \dots, B_k)$ of the color class~$S$, such that
$w \in A_0$, and for $j \in \{1,2, \dots, k\}$: (i) $|A_j| = |B_j|$,
(ii) $u_j \in B_j$ and (iii) $N(B_j)=A_j \cup \{w\}$.
See Figure~\ref{fig:non-removable-at-I}.

\begin{figure}[!ht]
\centering
\begin{tikzpicture}[scale=0.7]
\draw[thin] (2,-0.5) -- (4,-5);
\draw[thin] (2,-0.5) -- (7.5,-5);
\draw[thin] (2,-0.5) -- (13,-5);

\draw (-2,-0.5)node[nodelabel]{$I \cup \{\overline{x}\}$};
\draw (-2,-5)node[nodelabel]{$S$};

\draw (0,0) -- (2.5,0) -- (2.5,-1) -- (0,-1) -- (0,0);
\draw (1.25,0) node[nodelabel,above]{$A_0$};

\draw (2,-0.5)node{};
\draw (2.2,-0.5)node[left,nodelabel]{$w$};

\draw (3.5,0) -- (6,0) -- (6,-1) -- (3.5,-1) -- (3.5,0);
\draw (4.75,0) node[nodelabel,above]{$A_1$};

\draw (7,0) -- (9.5,0) -- (9.5,-1) -- (7,-1) -- (7,0);
\draw (8.25,0) node[nodelabel,above]{$A_2$};

\draw[dotted] (10,-0.5) -- (12,-0.5);

\draw (12.5,0) -- (15,0) -- (15,-1) -- (12.5,-1) -- (12.5,0);
\draw (13.75,0) node[nodelabel,above]{$A_k$};

\draw (0,-4.5) -- (2.5,-4.5) -- (2.5,-5.5) -- (0,-5.5) -- (0,-4.5);
\draw (1.25,-5.5) node[nodelabel,below]{$B_0$};

\draw (3.5,-4.5) -- (6,-4.5) -- (6,-5.5) -- (3.5,-5.5) -- (3.5,-4.5);
\draw (4.75,-5.5) node[nodelabel,below]{$B_1$};

\draw (4,-5)node{};
\draw (3.8,-5)node[right,nodelabel]{$u_1$};

\draw (7,-4.5) -- (9.5,-4.5) -- (9.5,-5.5) -- (7,-5.5) -- (7,-4.5);
\draw (8.25,-5.5) node[nodelabel,below]{$B_2$};

\draw (7.5,-5)node{};
\draw (7.3,-5)node[right,nodelabel]{$u_2$};

\draw[dotted] (10,-5) -- (12,-5);

\draw (12.5,-4.5) -- (15,-4.5) -- (15,-5.5) -- (12.5,-5.5) -- (12.5,-4.5);
\draw (13.75,-5.5) node[nodelabel,below]{$B_k$};

\draw (13,-5)node{};
\draw (12.8,-5)node[right,nodelabel]{$u_k$};

\end{tikzpicture}
\vspace*{-0.2in}
\caption{Illustration for Lemma~\ref{lem:non-removable-at-I}}
\label{fig:non-removable-at-I}
\end{figure}
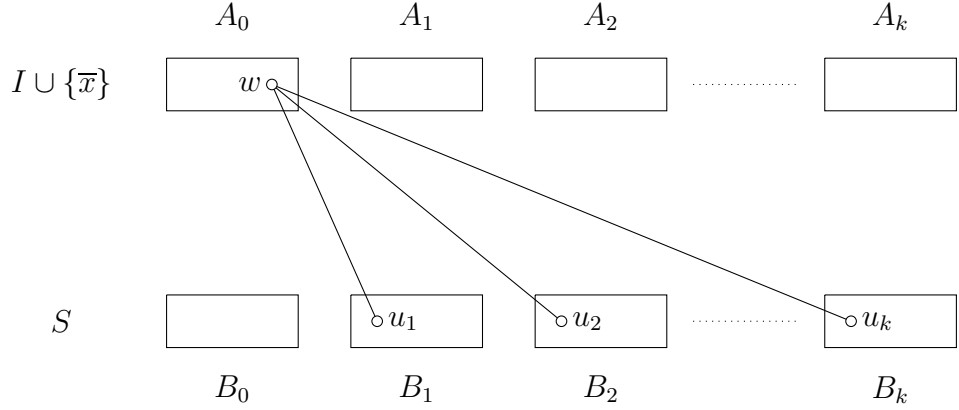

For $1 \leq j \leq k$, note that $A_j \cup \{w\}$ is a barrier of
\bipartitebarrier{e}{S}.
Furthermore, if the contraction vertex~$\overline{x}$ is not in~$A_j$, or if
an end of the edge~$\beta$ is not in~$B_j$, then $A_j \cup \{w\}$
is a barrier of~$G$ itself, which is absurd since $G$ is a brick.
Clearly, this would be the case for some $j \in \{1,2, \dots, k\}$ if $k \geq 3$.
We conclude that $k \leq 2$, thus establishing the first part of the assertion.

\smallskip
Now suppose that $k = 2$. It follows from the preceding paragraph
that an end of $\beta$ lies in~$B_1$ or in~$B_2$. This proves~{\it (i)}.
Adjust notation so that $b_1 \in B_1$. Furthermore, the contraction
vertex~$\overline{x}$ lies in~$A_2$. Consequently,
vertices $b_1$ and $\overline{x}$ are nonadjacent; this verifies {\it (ii)}.
Note that if $b_1$ and $w$ are adjacent, then the edge $b_1w$ is the unique
edge between $A_0$ and $B_1$, and it is non-removable in
\bipartitebarrier{e}{S} by assumption.
This proves {\it (iii)}.
Finally, consider the case in which $w =$~\vertexa,
where \vertexa~is the end of~$e$ in~$I$.
Observe that the neighbourhood of~$A_0 -$~\vertexa~lies in the set~$B_0$ in
the graph~\bipartitebarrier{e}{S} as well as in~$G$, whence $B_0$ is a barrier of~$G$.
We conclude that $|B_0|=1$, and that \vertexa~is the only vertex of~$A_0$.
Furthermore, the neighbourhood of~$A_1$ lies in~$B_1 \cup B_0$,
and thus $B_1 \cup B_0$ is a nontrivial barrier in
\bipartitebarrier{e}{S} as well as in~$G$, which is absurd.
We conclude that $w$ is distinct from the end~\vertexa~of~$e$;
thus {\it (iv)} holds. This completes the proof of
Lemma~\ref{lem:non-removable-at-I}.
\end{proof}

The above lemma implies that each vertex of~$I$,
except possibly the end~\vertexa~of~$e$, is incident with at least one candidate.
Furthermore, if~\vertexa~has degree three or more in \bipartitebarrier{e}{S}
then~\vertexa~is incident with at least two candidates; and likewise,
if any other vertex of~$I$, say~$w$,
has degree four or more then $w$ is incident with at least two candidates.
We thus have the following corollary which is used in the next section.

\begin{cor}
\label{cor:candidate-set-nonempty}
The candidate set
\candidateset{e}{S} has cardinality at least $|S|-2$.
(In particular,
the set~\candidateset{e}{S} is nonempty.)
Furthermore, if \candidateset{e}{S} is a matching
then each vertex of~$I$ is cubic in~$G$ and $|$\candidateset{e}{S}$| = |S|-2$.
\qed
\end{cor}

As we will see later,
by a result of Carvalho et al. (Corollary~\ref{cor:adjacent-candidates-greater-rank}),
if the candidate set \candidateset{e}{S} is not a matching
then it has a member whose rank is strictly greater than that of~$e$.
For this reason, in the proof of Theorem~\ref{thm:rank-plus-index}, we will
mainly have to deal with the case in which the candidate set is a matching.

\subsubsection{When the candidate set is a matching}
\label{sec:candidate-set-matching}

In this section, we suppose that the candidate set~\candidateset{e}{S} is a matching.
We will make several observations, and these will be useful to us in
Section~\ref{sec:proof-of-rank-plus-index-thm} where the proof of
Theorem~\ref{thm:rank-plus-index} is presented.
For all of the figures in the rest of this paper, the solid vertices are those
which are known to be cubic in the brick~$G$;
the hollow vertices may or may not be cubic.

\smallskip
Since \candidateset{e}{S} is a matching,
Corollary~\ref{cor:candidate-set-nonempty} implies
that every vertex of~$I$ is cubic in~$G$,
as shown in Figure~\ref{fig:candidate-set-matching}.
Furthermore, each of these vertices,
except for the end~\vertexa~of~$e$, is incident with exactly one candidate edge;
in particular, $|$\candidateset{e}{S}$| = |I|-1 = |S|-2$.

\begin{Not}
\label{Not:candidates}
We let $w_1, w_2, \dots, w_k$ denote the vertices of~$I-y$, where
$k:= |S|-2$, and for $1\leq j \leq k$, denote the edge of~\candidateset{e}{S}
incident with~$w_j$ by $f_j$ and its end in~$S$ by $u_j$.
\end{Not}

\vspace*{-0.2in}
\begin{figure}[!ht]
\centering
\begin{tikzpicture}[scale=1]
\draw (-1.5,1.45)node[nodelabel]{$H(e,S):$};
\draw (6.3,0.35)node[nodelabel]{$S$};
\draw (6.3,2.55)node[nodelabel]{$I$};

\draw (0.5,0.35) -- (0.75,0.9);
\draw (0.5,0.35) -- (1.1,0.85);

\draw (1.5,2.55) -- (1.3,2.1);
\draw (1.5,2.55) -- (1.7,2.1);

\draw (2.25,1.45)node[nodelabel]{$f_1$};
\draw (3.75,1.45)node[nodelabel]{$f_2$};
\draw (5.75,1.45)node[nodelabel]{$f_k$};
\draw (4.5,1.45)node[nodelabel]{$\cdots$};

\draw (0,0) -- (6,0) -- (6,0.7) -- (0,0.7) -- (0,0);

\draw (1,2.2) -- (6,2.2) -- (6,2.9) -- (1,2.9) -- (1,2.2);

\draw (5.5,0.35)node{} -- (5.5,2.55)node[fill=black]{};
\draw (3.5,0.35)node{} -- (3.5,2.55)node[fill=black]{};
\draw (2.5,0.35)node{} -- (2.5,2.55)node[fill=black]{};

\draw (0.5,2.55)node{};
\draw (0.5,0.35)node[fill=black]{};
\draw (1.5,0.35)node{};
\draw (1.5,2.55)node[fill=black]{};

\draw (0.5,-0.3)node[nodelabel]{$b_1$};
\draw (1.5,-0.3)node[nodelabel]{$u_0$};
\draw (2.5,-0.3)node[nodelabel]{$u_1$};
\draw (3.5,-0.3)node[nodelabel]{$u_2$};
\draw (4.5,-0.3)node[nodelabel]{$\cdots$};
\draw (5.5,-0.3)node[nodelabel]{$u_k$};

\draw (0.5,3.2)node[nodelabel]{$\overline{x}$};
\draw (1.5,3.2)node[nodelabel]{$y$};
\draw (2.5,3.2)node[nodelabel]{$w_1$};
\draw (3.5,3.2)node[nodelabel]{$w_2$};
\draw (4.5,3.2)node[nodelabel]{$\cdots$};
\draw (5.5,3.2)node[nodelabel]{$w_k$};

\end{tikzpicture}
\vspace*{-0.1in}
\caption{When \candidateset{e}{S} is a matching}
\label{fig:candidate-set-matching}
\end{figure}

Note that, since \candidateset{e}{S} is a matching, the vertices $u_1, u_2, \dots, u_k$
are distinct, as shown in Figure~\ref{fig:candidate-set-matching}.
Since every vertex of~$I$ is incident with two non-removable edges of
\bipartitebarrier{e}{S},
we deduce the following by assertions {\it (i)}, {\it (ii)} and {\it (iii)} of
Lemma~\ref{lem:non-removable-at-I}, respectively:

\begin{enumerate}[(1)]
\item an end of~$\beta$ lies in~$S$; adjust notation so that $b_1 \in S$,
\item in \bipartitebarrier{e}{S},
vertices $b_1$~and~$\overline{x}$ are nonadjacent;
consequently, in~$G$, all neighbours of~$b_1$, except~$b_2$,
lie in $I$, and
\item $b_1$ is distinct from
each of $u_1, u_2, \dots, u_k$.
\end{enumerate}

Furthermore, since $b_1$ is not incident with any member of \candidateset{e}{S},
Lemma~\ref{lem:non-removable-at-S} implies that it
has precisely two neighbours in~$I$; in particular, $b_1$ is cubic in~$G$.

\begin{Not}
\label{Not:last-vertex-of-barrier}
We let $u_0$ denote the vertex of~$S$ which is distinct
from $b_1, u_1, u_2, \dots, u_k$.
That is, $S = \{b_1, u_0, u_1, u_2, \dots, u_k\}$.
(See Figure~\ref{fig:candidate-set-matching}.)
\end{Not}

As the vertex $u_0$ is not incident with any candidate,
we conclude using Lemma~\ref{lem:non-removable-at-S} that
$u_0$ has at most one neighbour in~$I$.
Observe that if $u_0$ has no neighbours in~$I$ then $(S - u_0) \cup \{z\}$
is a barrier of~$G$ (where~\vertexb~is the end of~$e$ which is not in~$I$), which is
absurd as $G$ is a brick.
Thus, $u_0$ has precisely one neighbour in~$I$.

\smallskip
We note that if~\vertexa~is the unique neighbour of~$u_0$ in the set~$I$,
then $S - u_0$ is a barrier of~$G$, which leads us to the same contradiction as before.
We thus conclude that $u_0$
has precisely one neighbour in the set~$I-y$, and that its remaining neighbours
lie in~$\overline{X}$; see Figure~\ref{fig:candidate-set-matching-more-information}.
In particular, in~\bipartitebarrier{e}{S}, there are are least two edges between
$u_0$~and~$\overline{x}$.

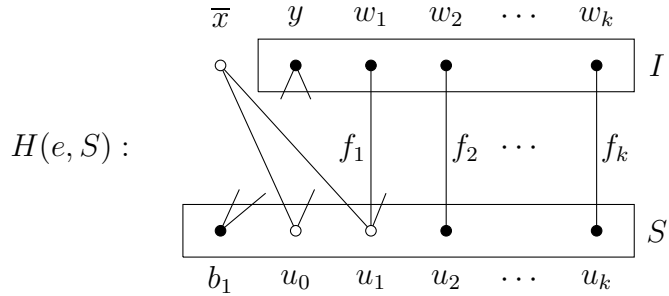
\begin{figure}[!ht]
\centering
\begin{tikzpicture}[scale=1]
\draw (-1.5,1.45)node[nodelabel]{$H(e,S):$};
\draw (6.3,0.35)node[nodelabel]{$S$};
\draw (6.3,2.55)node[nodelabel]{$I$};

\draw (0.5,0.35) -- (0.75,0.9);
\draw (0.5,0.35) -- (1.1,0.85);

\draw (1.5,0.35) -- (0.5,2.55);
\draw (1.5,0.35) -- (1.75,0.9);

\draw (2.5,0.35) -- (0.5,2.55);
\draw (2.5,0.35) -- (2.7,0.85);

\draw (1.5,2.55) -- (1.3,2.1);
\draw (1.5,2.55) -- (1.7,2.1);

\draw (2.25,1.45)node[nodelabel]{$f_1$};
\draw (3.75,1.45)node[nodelabel]{$f_2$};
\draw (5.75,1.45)node[nodelabel]{$f_k$};
\draw (4.5,1.45)node[nodelabel]{$\cdots$};

\draw (0,0) -- (6,0) -- (6,0.7) -- (0,0.7) -- (0,0);

\draw (1,2.2) -- (6,2.2) -- (6,2.9) -- (1,2.9) -- (1,2.2);

\draw (5.5,0.35)node[fill=black]{} -- (5.5,2.55)node[fill=black]{};
\draw (3.5,0.35)node[fill=black]{} -- (3.5,2.55)node[fill=black]{};
\draw (2.5,0.35)node{} -- (2.5,2.55)node[fill=black]{};

\draw (0.5,2.55)node{};
\draw (0.5,0.35)node[fill=black]{};
\draw (1.5,0.35)node{};
\draw (1.5,2.55)node[fill=black]{};

\draw (0.5,-0.3)node[nodelabel]{$b_1$};
\draw (1.5,-0.3)node[nodelabel]{$u_0$};
\draw (2.5,-0.3)node[nodelabel]{$u_1$};
\draw (3.5,-0.3)node[nodelabel]{$u_2$};
\draw (4.5,-0.3)node[nodelabel]{$\cdots$};
\draw (5.5,-0.3)node[nodelabel]{$u_k$};

\draw (0.5,3.2)node[nodelabel]{$\overline{x}$};
\draw (1.5,3.2)node[nodelabel]{$y$};
\draw (2.5,3.2)node[nodelabel]{$w_1$};
\draw (3.5,3.2)node[nodelabel]{$w_2$};
\draw (4.5,3.2)node[nodelabel]{$\cdots$};
\draw (5.5,3.2)node[nodelabel]{$w_k$};

\end{tikzpicture}
\vspace*{-0.1in}
\caption{$u_0$ and $u_1$ are the only vertices
adjacent with the contraction vertex~$\overline{x}$}
\label{fig:candidate-set-matching-more-information}
\end{figure}

Finally, since each vertex $u_j$ in the set~$\{u_1, u_2, \dots, u_k\}$ is incident
with exactly one candidate,
Lemma~\ref{lem:non-removable-at-S} implies that
$u_j$ must satisfy one of the following conditions:

\begin{enumerate}[(i)]
\item either $u_j$ has some neighbour in the set~$\overline{X}$ and it has precisely two
neighbours in the set~$I$,
\item or otherwise, $u_j$ has no neighbours in the set~$\overline{X}$ and it has
precisely three neighbours in the set~$I$.
\end{enumerate}

Observe, by counting degrees of the vertices in~$I$,
that there are precisely $3k+2$ edges with one end in~$I$ and the other
end in~$S$. Of these $3k+2$ edges, precisely two are incident with~$b_1$, and
only one is incident with~$u_0$. Thus there are $3k-1$ edges with one end
in~$I$ and the other end in~$\{u_1, u_2, \dots, u_k\}$. It follows immediately
that exactly one vertex among $u_1, u_2, \dots, u_k$ satisfies condition~(i);
every other vertex satisifes condition (ii).

\begin{Not}
\label{Not:the-special-candidate}
We adjust notation so that $u_1$ is the only vertex
in $\{u_1, u_2, \dots, u_k\}$ which
has neighbours in~$\overline{X}$.
(See Figure~\ref{fig:candidate-set-matching-more-information}.)
\end{Not}

Adopting the notation introduced thus far, the next proposition
summarizes our observations in terms of the brick~$G$.

\begin{prop}
{\sc [When the Candidate Set is a Matching]}
\label{prop:candidate-set-matching}
The following hold:
\begin{enumerate}[(i)]
\item each vertex of~$I$ is cubic,
\item $b_1$ is cubic and its neighbours lie in~$I \cup \{b_2\}$,
\item $u_0$ has precisely one neighbour in~$I - y$,
and all of its remaining neighbours lie in~$\overline{X}$,
\item $u_1$ has precisely two neighbours in~$I$,
and all of its remaining neighbours lie in~$\overline{X}$,
\item if $|S| \geq 4$, then each vertex of~$S - \{b_1,u_0,u_1\}$ has precisely
three neighbours and these neighbours lie in~$I$. \qed
\end{enumerate}
\end{prop}

Observe that, if the barrier~$S$ has precisely three vertices, then the candidate
set \candidateset{e}{S} has only one edge (that is, $f_1=u_1w_1$); in
this case, all of the edges of~$G[X]$ are determined by
Proposition~\ref{prop:candidate-set-matching}, as listed below,
and as shown
in Figure~\ref{fig:candidate-set-matching-barrier-size-three}.
(Note that the underlying simple graph of \bipartitebarrier{e}{S}
is a ladder of order six whose cubic vertices are $u_1$~and~$w_1$.)

\begin{rem}
\label{rem:candidate-set-matching-barrier-size-three}
Suppose that $|S|=3$. Then the following hold:
\begin{enumerate}[(i)]
\item the three neighbours of~$b_1$ are~\vertexa$,w_1$ and $b_2$, 
\item $u_0$ is adjacent with $w_1$, and all of its remaining neighbours lie
in~$\overline{X}$,
\item $u_1$ is adjacent with~\vertexa~and with $w_1$, and all of its remaining
neighbours lie in~$\overline{X}$.
\end{enumerate}
\end{rem}

\vspace*{-0.2in}
\begin{figure}[!ht]
\centering
\begin{tikzpicture}[scale=1]
\draw (-1.5,1.45)node[nodelabel]{$H(e,S):$};
\draw (3.3,0.35)node[nodelabel]{$S$};
\draw (3.3,2.55)node[nodelabel]{$I$};

\draw (1.5,0.35) -- (0.5,2.55);

\draw (2.5,0.35) -- (0.5,2.55);
\draw (2.5,0.35) -- (1.5,2.55);

\draw (0.5,0.35) -- (1.5,2.55);
\draw (0.5,0.35) -- (2.5,2.55);

\draw (1.5,0.35) -- (2.5,2.55);

\draw (2.75,1.45)node[nodelabel]{$f_1$};

\draw (0,0) -- (3,0) -- (3,0.7) -- (0,0.7) -- (0,0);

\draw (1,2.2) -- (3,2.2) -- (3,2.9) -- (1,2.9) -- (1,2.2);

\draw (2.5,0.35)node{} -- (2.5,2.55)node[fill=black]{};

\draw (0.5,2.55)node{};
\draw (0.5,0.35)node[fill=black]{};
\draw (1.5,0.35)node{};
\draw (1.5,2.55)node[fill=black]{};

\draw (0.5,-0.3)node[nodelabel]{$b_1$};
\draw (1.5,-0.3)node[nodelabel]{$u_0$};
\draw (2.5,-0.3)node[nodelabel]{$u_1$};

\draw (0.5,3.2)node[nodelabel]{$\overline{x}$};
\draw (1.5,3.2)node[nodelabel]{$y$};
\draw (2.5,3.2)node[nodelabel]{$w_1$};

\end{tikzpicture}
\caption{When \candidateset{e}{S} is a matching, and $S$ has only three vertices}
\label{fig:candidate-set-matching-barrier-size-three}
\end{figure}
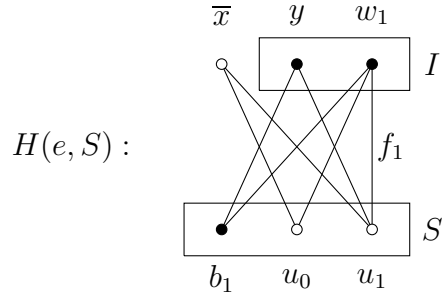

We shall now consider the situation in which $|S| \geq 4$, that is, $k \geq 2$.
Note that, as per our notation, $f_1=u_1w_1$ is the only candidate whose end
in~$S$ (that is, $u_1$) has a neighbour in~$\overline{X}$. In this
sense, $f_1$ is different from the remaining candidates $f_2, f_3, \dots, f_k$.
In the following proposition, we first show that $b_1$
is nonadjacent with the end~$w_1$ of~$f_1$.
Consequently, $b_1$ is adjacent with at least one of
$w_2, w_3, \dots, w_k$; we shall
assume without loss of generality that $b_1$ is adjacent with~$w_2$,
as shown in Figure~\ref{fig:candidate-set-matching-barrier-size-four-or-more}.
In its proof,
we will apply the Lov{\'a}sz-Vempala Lemma (\ref{lem:lovasz-vempala})
to the graph \bipartitebarrier{e}{S}, first at~$w_1$, and then at~$w_2$;
each of these applications is a refinement of the situation in
Lemma~\ref{lem:non-removable-at-I}.

\begin{prop}
\label{prop:candidate-set-matching-barrier-size-four-or-more}
Suppose that $|S| \geq 4$. Then the following hold:
\begin{enumerate}[(i)]
\item $b_1$ and $w_1$ are nonadjacent; adjust notation so that
$b_1w_2$ is an edge of~$G$,
\item \vertexa\ is adjacent with each of $b_1$ and $u_2$, and
\item $u_0$ and $w_2$ are nonadjacent.
\end{enumerate}
\end{prop}
\begin{proof}
First, we apply Lemma~\ref{lem:lovasz-vempala} to the graph
\bipartitebarrier{e}{S} at vertex~$w_1$. Since $f_1=u_1w_1$ is the only
removable edge incident with~$w_1$, there exist
partitions~$(A_0,A_1,A_2)$ of $I \cup \{\overline{x}\}$, and $(B_0,B_1,B_2)$
of~$S$, such that $w_1 \in A_0$, and $|A_j| = |B_j|$ for $j \in \{0,1,2\}$,
vertex~$u_1$ lies in~$B_0$, and the remaining two neighbours of $w_1$
lie in~$B_1$ and in~$B_2$, respectively. Furthermore, $N(B_1) = A_1 \cup \{w_1\}$
and $N(B_2) = A_2 \cup \{w_1\}$.

\smallskip
Suppose that $b_1$ is a neighbour of~$w_1$, and adjust notation so that
$b_1 \in B_1$. The contraction vertex~$\overline{x}$ lies in~$A_2$,
since otherwise $A_2 \cup \{w_1\}$ is a nontrivial barrier in~$G$. We
will deduce that each of the sets $B_0, B_1$ and $B_2$ is a singleton, and thus
the barrier~$S$ has precisely three vertices, contrary to the hypothesis.

\smallskip
First of all, note that the neighbourhood of~$B_1 - b_1$ is contained in $A_1$, and thus
if $|A_1| \geq 2$ then $A_1$ is a nontrivial barrier in~$G$; we conclude that
$|A_1|=1$ and that $B_1 = \{b_1\}$. 
Observe that the contraction vertex~$\overline{x}$ is only adjacent with $u_1$,
which lies in~$B_0$, and with~$u_0$.
Thus the neighbourhood
of~$B_2 - u_0$ is contained in $(A_2 - \overline{x}) \cup \{w_1\}$,
whence the latter is a barrier of~$G$;
we infer that
$A_2 = \{\overline{x}\}$; consequently, the unique vertex of~$B_2$ has
precisely two neighbours, namely $w_1$ and $\overline{x}$.
It follows that $B_2 = \{u_0\}$. Since the vertex~$w_1$ is cubic,
the neighbourhood of $B_0 - u_1$ is contained in~$(A_0 - w_1) \cup A_1$,
whence the latter is a barrier of~$G$;
we infer that $A_0 = \{w_1\}$, thus $B_0 = \{u_1\}$.
It follows that $|S|=3$, contrary to our hypothesis. Thus
$b_1$ and $w_1$ are nonadjacent; this proves {\it (i)}. As in the statement
of the proposition, adjust notation so that $b_1$~and~$w_2$ are adjacent;
see Figure~\ref{fig:candidate-set-matching-barrier-size-four-or-more}.

\smallskip
To deduce {\it (ii)} and {\it (iii)}, we apply Lemma~\ref{lem:lovasz-vempala}
to the graph \bipartitebarrier{e}{S} at vertex~$w_2$.
Similar to the earlier situation, there exist partitions $(A_0, A_1, A_2)$
of~$I \cup \{\overline{x}\}$, and $(B_0,B_1,B_2)$ of~$S$, such that
$w_2 \in A_0$, and $|A_j| = |B_j|$ for $j \in \{1,2,3\}$, vertex $u_2$ lies
in~$B_0$, and the remaining two neighbours of~$w_2$ lie in~$B_1$ and in~$B_2$,
respectively.
Adjust notation so that $b_1$ lies in~$B_1$.
Also, $N(B_1) = A_1 \cup \{w_2\}$ and $N(B_2) = A_2 \cup \{w_2\}$.
As before, we conclude that
$\overline{x}$ lies in $A_2$, and that $|A_1| = |B_1| = 1$.

\smallskip
Observe that the unique vertex of~$A_1$ has all of its neighbours in the
set~$B_0 \cup B_1$. We will show that
$B_0 = \{u_2\}$; this implies that the unique vertex of~$A_1$
has precisely two neighbours, and so it must be the end~\vertexa~of~$e$;
this immediately implies {\it (ii)}.

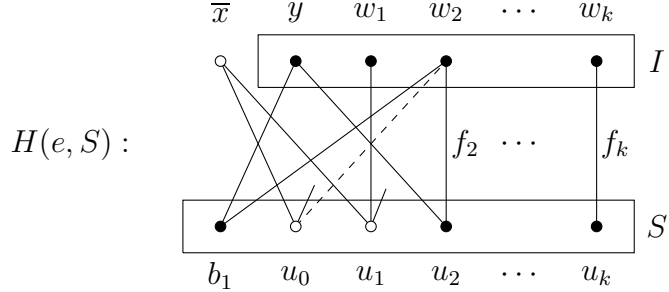
\begin{figure}[!ht]
\centering
\begin{tikzpicture}[scale=1]
\draw (-1.5,1.45)node[nodelabel]{$H(e,S):$};
\draw (6.3,0.35)node[nodelabel]{$S$};
\draw (6.3,2.55)node[nodelabel]{$I$};

\draw[dashed,thin] (1.5,0.35) -- (3.5,2.55);

\draw (0.5,0.35) -- (1.5,2.55);
\draw (0.5,0.35) -- (3.5,2.55);

\draw (1.5,0.35) -- (0.5,2.55);
\draw (1.5,0.35) -- (1.75,0.9);

\draw (2.5,0.35) -- (0.5,2.55);
\draw (2.5,0.35) -- (2.7,0.85);

\draw (1.5,2.55) -- (3.5,0.35);

\draw (3.75,1.45)node[nodelabel]{$f_2$};
\draw (5.75,1.45)node[nodelabel]{$f_k$};
\draw (4.5,1.45)node[nodelabel]{$\cdots$};

\draw (0,0) -- (6,0) -- (6,0.7) -- (0,0.7) -- (0,0);

\draw (1,2.2) -- (6,2.2) -- (6,2.9) -- (1,2.9) -- (1,2.2);

\draw (5.5,0.35)node[fill=black]{} -- (5.5,2.55)node[fill=black]{};
\draw (3.5,0.35)node[fill=black]{} -- (3.5,2.55)node[fill=black]{};
\draw (2.5,0.35)node{} -- (2.5,2.55)node[fill=black]{};

\draw (0.5,2.55)node{};
\draw (0.5,0.35)node[fill=black]{};
\draw (1.5,0.35)node{};
\draw (1.5,2.55)node[fill=black]{};

\draw (0.5,-0.3)node[nodelabel]{$b_1$};
\draw (1.5,-0.3)node[nodelabel]{$u_0$};
\draw (2.5,-0.3)node[nodelabel]{$u_1$};
\draw (3.5,-0.3)node[nodelabel]{$u_2$};
\draw (4.5,-0.3)node[nodelabel]{$\cdots$};
\draw (5.5,-0.3)node[nodelabel]{$u_k$};

\draw (0.5,3.2)node[nodelabel]{$\overline{x}$};
\draw (1.5,3.2)node[nodelabel]{$y$};
\draw (2.5,3.2)node[nodelabel]{$w_1$};
\draw (3.5,3.2)node[nodelabel]{$w_2$};
\draw (4.5,3.2)node[nodelabel]{$\cdots$};
\draw (5.5,3.2)node[nodelabel]{$w_k$};

\end{tikzpicture}
\vspace*{-0.2in}
\caption{When \candidateset{e}{S} is a matching, and $S$ has four or more vertices;
the vertices $u_0$~and~$w_2$ are nonadjacent}
\label{fig:candidate-set-matching-barrier-size-four-or-more}
\end{figure}

Note that the neighbourhood of~$A_0 - w_2$ is contained in~$B_0$.
Thus, if $|A_0| \geq 2$ then~\vertexa~lies in~$A_0$ (since otherwise
$B_0$ is a barrier of~$G$).
If $|A_0| \geq 3$ then $B_0$ is a barrier
of~$G-e$ with three or more vertices. (Note that the barrier~$B_0$ is contained
in the barrier~$S$.) Since no end of~$\beta$ lies in~$B_0$, 
it follows from our earlier observations that the candidate set \candidateset{e}{B_0}
is not a matching. However, by Corollary~\ref{cor:candidate-containment-property},
\candidateset{e}{B_0} is a subset of \candidateset{e}{S}, and the latter is a matching; this
is absurd. We conclude that $A_0$ has at most two vertices, that is,
either $A_0 = \{w_2\}$ or $A_0 = \{$\vertexa$,w_2\}$.
Now suppose that $A_0 = \{$\vertexa$,w_2\}$. The unique vertex of~$A_1$ is adjacent with
$b_1$, and thus statement {\it (i)} implies that $w_1 \notin A_1$.
Assume without loss of generality
that $A_1 = \{w_3\}$. Since $w_3$ is cubic, we conclude that its neighbourhood
is precisely $B_0 \cup B_1$, and thus $B_0 = \{u_2,u_3\}$.
Observe that $Q:=w_3u_2w_2b_1w_3$ is a $4$-cycle in
\bipartitebarrier{e}{S} containing
the vertex~$w_3$, and thus by Corollary~\ref{cor:quadrilateral-LV}, one of the
edges $w_3u_2$ and $w_3b_1$ is removable in \bipartitebarrier{e}{S}; however,
this contradicts our hypothesis since
the only removable edges are the members of \candidateset{e}{S}.
We thus conclude that $A_0 = \{w_2\}$. As explained earlier,
$A_1 = \{$\vertexa$\}$, and thus~\vertexa~is adjacent with each of $b_1$ and $u_2$;
this proves {\it (ii)}.

\smallskip
Now suppose that $u_0$ and $w_2$ are adjacent. Observe that $u_1 \in B_2$,
and thus all of its neighbours lie in~$A_2$, whence $|A_2| \geq 3$.
The neighbourhood of $B_2 - \{u_0,u_1\}$ is contained in~$A_2 - \overline{x}$,
whence the latter is a nontrivial barrier of~$G$, which is a contradiction.
We conclude that
$u_0$ and $w_2$ are nonadjacent; this proves {\it (iii)}, and completes
the proof of Proposition~\ref{prop:candidate-set-matching-barrier-size-four-or-more}.
\end{proof}

\subsection{The Equal Rank Lemma}
\label{sec:equal-rank-lemma}

Here, we present an important lemma which is used in the proof of
Theorem~\ref{thm:rank-plus-index}.
This lemma considers the situation in which $G$ is an \Rbrick\ and
$e:=$~\vertexa\vertexb~is an \Rcomp\ edge of index two that is not thin,
and $f$ is a candidate relative to a barrier
of~$G-e$ such that $f$ is also of index two and its rank is equal to that of~$e$.
The reader is advised to review the Three Case Lemma (\ref{lem:three-case})
and Section~\ref{sec:index-two}
before proceeding further.

\smallskip
The Equal Rank Lemma (\ref{lem:equal-rank}) relates the barrier structure
of~$G-f$ to that of~$G-e$.
More specifically, the lemma establishes subset/superset relationships between
eight sets of vertices: the barriers
$S_1$ and $S_2$ of~$G-e$ (as in Case 2 of Lemma~\ref{lem:three-case})
and their corresponding sets of isolated vertices $I_1$ and $I_2$, and likewise,
the barriers $S_3$ and $S_4$ of~$G-f$ and their corresponding sets
of isolated vertices $I_3$ and $I_4$. Among other things, the lemma
shows that $S_1 \cup I_1 \cup S_2 \cup I_2 = S_3 \cup I_3 \cup S_4 \cup I_4$.
We now introduce the relevant notation more precisely.

\smallskip
Since $e$ is of index two, by the Three Case Lemma, $G-e$ has precisely two maximal
nontrivial barriers, and since $e$ is not thin, at least one of these barriers, say~$S_1$,
has three or more vertices (see Proposition~\ref{prop:equivalent-definition-Rthin}).
We adopt Notation~\ref{Not:Rcompatible-in-Rbrick} for the brick~$G$ and edge~$e$.
Assume without loss of generality that $S_1 \subset B$, and let
$I_1$ denote the set of isolated vertices of~$(G-e)-S_1$.
We shall denote by $S_2$ the maximal nontrivial barrier of~$(G-e)/X_1$ where
$X_1 := S_1 \cup I_1$, and by $I_2$ the set of isolated vertices of~$(G-e)-S_2$.
Note that the end~\vertexb~of~$e$ lies in~$I_2$ which is a subset of~$B$,
whereas the other end~\vertexa~of~$e$ lies in~$I_1$ which is a subset of~$A$.
See Figure~\ref{fig:equal-rank} (top).

\smallskip
By Corollary~\ref{cor:candidate-set-nonempty}, the candidate set \candidateset{e}{S_1}
is nonempty, and by Proposition~\ref{prop:edges-of-candidate-set},
each of its members is an \Rcomp\ edge whose rank is at
least that of~$e$.
Now, let $f:=uw$ be a member of \candidateset{e}{S_1}
such that $u \in S_1$ and $w \in I_1$, and suppose that
the index of~$f$ is two.
The following result of Carvalho et al. \cite[Lemma 32]{clm06}
plays a crucial role in our proof of the Equal Rank Lemma (\ref{lem:equal-rank}).

\begin{lem}
\label{lem:equal-rank-barrier-containment}
Assume that ${\sf index}(e)={\sf index}(f)=2$.
If ${\sf rank}(e)={\sf rank}(f)$ then
$S_2$ is a subset of a barrier of~$G-f$. \qed
\end{lem}

We shall let $S_3$ denote the maximal nontrivial barrier of~$G-f$ which is contained
in the color class~$B$, and $I_3$ the set of isolated vertices of~$(G-f)-S_3$.
Furthermore, let $S_4$ denote the maximal nontrivial barrier of~$(G-f)/(S_3 \cup I_3)$, and $I_4$ the set of isolated vertices of~$(G-f)-S_4$.
Note that the end~$u$ of~$f$ lies in~$I_4$, and its other end~$w$ lies in~$I_3$.
See Figure~\ref{fig:equal-rank} (bottom).
We are now ready to state the Equal Rank Lemma using the notation introduced so far.

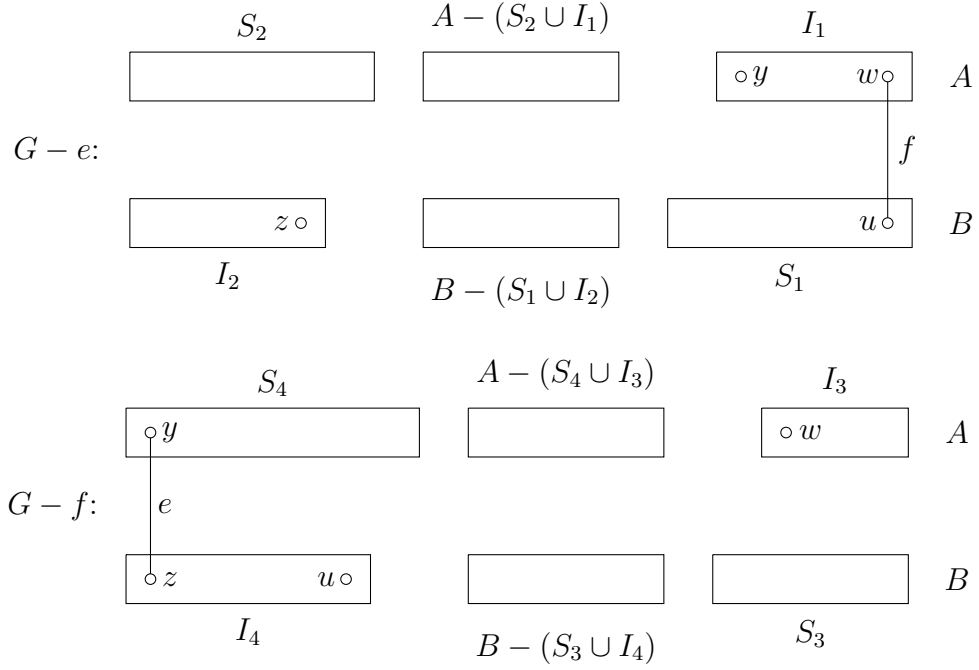
\begin{figure}[!ht]
\centering
\begin{tikzpicture}[scale=0.65]
\draw (-2.5,-2) node[nodelabel]{$G-e$:};

\draw (16,-0.5) node[nodelabel]{$A$};
\draw (16,-3.5) node[nodelabel]{$B$};

\draw (-1,0) -- (-1,-1) -- (4,-1) -- (4,0) -- (-1,0);
\draw (1.5,-0.3) node[above,nodelabel]{$S_2$};

\draw (-1,-3) -- (-1,-4) -- (3,-4) -- (3,-3) -- (-1,-3);
\draw (1,-3.8) node[below,nodelabel]{$I_2$};

\draw (5,0) -- (5,-1) -- (9,-1) -- (9,0) -- (5,0);
\draw (7,-1.5) node[above,nodelabel]{$A-(S_2 \cup I_1)$};

\draw (5,-3) -- (5,-4) -- (9,-4) -- (9,-3) -- (5,-3);
\draw (7,-2.7) node[below,nodelabel]{$B-(S_1 \cup I_2)$};

\draw (11,0) -- (11,-1) -- (15,-1) -- (15,0) -- (11,0);
\draw (13,-0.2) node[above,nodelabel]{$I_1$};

\draw (10,-3) -- (10,-4) -- (15,-4) -- (15,-3) -- (10,-3);
\draw (12.5,-3.8) node[below,nodelabel]{$S_1$};

\draw (2.5,-3.5) node{};
\draw (2.1,-3.5) node[nodelabel]{\vertexb};

\draw (11.5,-0.5) node{};
\draw (11.9,-0.5) node[nodelabel]{\vertexa};

\draw (14.5,-0.5) -- (14.5,-3.5);
\draw (14.5,-0.5) node{};
\draw (14.1,-0.5) node[nodelabel]{$w$};
\draw (14.5,-3.5) node{};
\draw (14.1,-3.5) node[nodelabel]{$u$};
\draw (14.9,-2) node[nodelabel]{$f$};
\end{tikzpicture}

\vspace*{-0.7in}
\begin{tikzpicture}[scale=0.65]
\draw (-2.5,-2) node[nodelabel]{$G-f$:};

\draw (16,-0.5) node[nodelabel]{$A$};
\draw (16,-3.5) node[nodelabel]{$B$};

\draw (-1,0) -- (-1,-1) -- (5,-1) -- (5,0) -- (-1,0);
\draw (2,-0.3) node[above,nodelabel]{$S_4$};

\draw (-1,-3) -- (-1,-4) -- (4,-4) -- (4,-3) -- (-1,-3);
\draw (1.5,-3.8) node[below,nodelabel]{$I_4$};

\draw (6,0) -- (6,-1) -- (10,-1) -- (10,0) -- (6,0);
\draw (8,-1.5) node[above,nodelabel]{$A-(S_4 \cup I_3)$};

\draw (6,-3) -- (6,-4) -- (10,-4) -- (10,-3) -- (6,-3);
\draw (8,-2.7) node[below,nodelabel]{$B-(S_3 \cup I_4)$};

\draw (12,0) -- (12,-1) -- (15,-1) -- (15,0) -- (12,0);
\draw (13.5,-0.2) node[above,nodelabel]{$I_3$};

\draw (11,-3) -- (11,-4) -- (15,-4) -- (15,-3) -- (11,-3);
\draw (13,-3.8) node[below,nodelabel]{$S_3$};

\draw (3.5,-3.5) node{};
\draw (3.1,-3.5) node[nodelabel]{$u$};

\draw (12.5,-0.5) node{};
\draw (13,-0.5) node[nodelabel]{$w$};

\draw (-0.5,-0.5) -- (-0.5,-3.5);
\draw (-0.5,-0.5) node{};
\draw (-0.1,-0.5) node[nodelabel]{\vertexa};
\draw (-0.5,-3.5) node{};
\draw (-0.1,-3.5) node[nodelabel]{\vertexb};
\draw (-0.2,-2) node[nodelabel]{$e$};
\end{tikzpicture}
\vspace*{-0.4in}
\caption{The Equal Rank Lemma}
\label{fig:equal-rank}
\end{figure}

\begin{lem}
{\sc [The Equal Rank Lemma]}
\label{lem:equal-rank}
Assume that \mbox{${\sf index}(e)={\sf index}(f)=2$}.
If ${\sf rank}(e) = {\sf rank}(f)$
then the following statements hold:
\begin{enumerate}[{\it (i)}]
\item $e$~and~$f$ are nonadjacent,
\item $S_3 \subseteq S_1-u$ and $I_3 \subseteq I_1-y$,
\item $S_2 \subset S_4$ and $I_2 \subset I_4$,
\item $S_1 \cup I_2 = S_3 \cup I_4$ and $S_2 \cup I_1 = S_4 \cup I_3$,
\item $N(u) \subseteq S_2 \cup I_1$, and
\item $e$ is a member of the candidate set \candidateset{f}{S_4}.
\end{enumerate}
\end{lem}
\begin{proof}
We examine the graph~$G-e-f$ in order to prove {\it (i)}~and~{\it (ii)}.
Clearly, $S_3$ is a barrier of~$G-e-f$.
Observe that, since $f$ has an end in~$S_1$, every barrier of~$G-e-f$
which contains $S_1$ is a barrier of~$G-e$ as well. Since $S_1$
is a maximal barrier of~$G-e$, we infer that $S_1$ is a maximal barrier
of~$G-e-f$ as well.
By the Canonical Partition Theorem~(\ref{thm:canonical-partition}),
to prove that $S_3$ is a subset of~$S_1$, it
suffices to show that $S_1 \cap S_3$ is nonempty.
To see this, note that $w \in I_1 \cap I_3$, and thus any neighbour of~$w$ in $G-e-f$
lies in $S_1 \cap S_3$. Furthermore, since $u \notin S_3$, we conclude that
$S_3 \subseteq S_1 - u$; this proves part of {\it (ii)}.
In particular, $z \notin S_3$. Consequently, $y \notin I_3$, and thus $y$~and~$w$
are distinct. This proves {\it (i)}.

\smallskip
Now we prove the remaining part of~{\it (ii)}.
Let $v \in I_3$, that is, $v$ is isolated in~$(G-f)-S_3$. Consequently, $v$ is isolated
in~$(G-f)-S_1$. Since $f$ has an end in~$S_1$, we infer that $v$ is isolated
in~$(G-e)-S_1$, that is, $v \in I_1$. Thus $I_3 \subseteq I_1 - y$. This proves~{\it (ii)}.

\smallskip
We will now prove {\it (iii)} and {\it (iv)}. We begin by showing that $S_2$ is a subset
of~$S_4$. By Lemma~\ref{lem:equal-rank-barrier-containment},
$S_2$ is a subset of the unique maximal nontrivial barrier of~$G-f$
which is contained in the color class~$A$, say~$S^*_4$.
By the Three Case Lemma~(\ref{lem:three-case}), $S^*_4 = S_4 \cup I'$ for some
(possibly empty) subset~$I'$ of~$I_3$. That is, $S_2$ is a subset of~$S_4 \cup I'$.
Note that $S_2$~and~$I_1$ are disjoint; by {\it (ii)}, $S_2 \cap I' = \emptyset$.
Thus, $S_2 \subseteq S_4$.

\smallskip
Since the ranks of $e$ and $f$ are equal, it follows that $|A-(S_2 \cup I_1)| = |A-(S_4 \cup I_3)|$
and likewise, $|B-(S_1 \cup I_2)| = |B-(S_3 \cup I_4)|$. In order to prove {\it (iv)}, it suffices
to prove the following claim.

\begin{claim}
\label{claim:core-subset}
$A-(S_2 \cup I_1) \subseteq A-(S_4 \cup I_3)$
and $B-(S_1 \cup I_2) \subseteq B-(S_3 \cup I_4)$.
\end{claim}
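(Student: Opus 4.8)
The plan is to reduce both inclusions of Claim~\ref{claim:core-subset} to the single inclusion $S_4 \subseteq S_2 \cup I_1$, and then to attack that inclusion directly. First I would dispose of the easy content. Since each of $S_1,S_2,S_3,S_4,I_1,I_2,I_3,I_4$ lies in a single colour class, taking complements inside $A$ (resp.\ $B$) is harmless, so the first inclusion of the claim is equivalent to $S_4 \cup I_3 \subseteq S_2 \cup I_1$ and the second to $S_3 \cup I_4 \subseteq S_1 \cup I_2$. By part {\it (ii)} of the Equal Rank Lemma we have $I_3 \subseteq I_1$ and $S_3 \subseteq S_1$, so these become $S_4 \subseteq S_2 \cup I_1$ and $I_4 \subseteq S_1 \cup I_2$ respectively. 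Recall also that $S_2 \subseteq S_4$ and $S_2 \cap I_1 = \emptyset$ are already established.

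Next I would show that $S_4 \subseteq S_2 \cup I_1$ already forces $I_4 \subseteq S_1 \cup I_2$, so that only one inclusion requires real work. Let $v \in I_4 \setminus S_1$; the goal is $v \in I_2$. Since $v \in B$ we have $v \neq w, y$ (wrong colour class) and $v \neq u$ (as $u \in S_1$); and if $v = z$ then $v \in I_2$ and we are done, so assume $v \neq z$. Then $v$ is incident with neither $e$ nor $f$, so $N_{G-e}(v) = N_{G}(v) = N_{G-f}(v)$, and because $v \in I_4$ this common neighbourhood lies in $S_4 \subseteq S_2 \cup I_1$. If $v$ had a neighbour $v' \in I_1$, then $v'$ is isolated in $(G-e)-S_1$, whence $v \in N_{G-e}(v') \subseteq S_1$, a contradiction; so $N_{G-e}(v) \subseteq S_2$, i.e.\ $v$ is isolated in $(G-e)-S_2$, and therefore $v \in I_2$. (The same bookkeeping will later yield part {\it (v)}, once $S_4 \subseteq S_2 \cup I_1$ is in hand, via $u \in I_4$ and $N(u) \subseteq S_4 \cup \{w\}$.)

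It remains to prove $S_4 \subseteq S_2 \cup I_1$; since $S_2 \subseteq S_4$ and $S_2 \cap I_1 = \emptyset$, this says $S_4 \setminus I_1 \subseteq S_2$. I would argue by contradiction: suppose $v \in S_4$ with $v \notin S_2 \cup I_1$, so that $v$ is a vertex of the brick of $G-e$ and lies in the large component of each of $(G-e)-S_1$ and $(G-e)-S_2$. The idea is to play the barrier $S_4$ of $G-f$ off against the fact that $G$, being a brick, has no nontrivial barrier. Let $S_4^{*}$ be the maximal nontrivial barrier of $G-f$ in $A$ (as in the proof of part {\it (iii)}, so $S_4 \subseteq S_4^{*}$); deleting the edge $e$ preserves barrierhood, so $S_4^{*}$ is a barrier of the matching covered graph $G-e-f$, hence lies in a maximal barrier $\widehat{S}$ of $G-e-f$, and $v \in S_4 \subseteq \widehat{S}$ while also $S_2 \subseteq S_4 \subseteq \widehat{S}$. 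Now $\widehat{S}$ is a nontrivial barrier of $G-e-f$ but, $G$ being a brick, not a barrier of $G$, so passing from $G-\widehat{S}$ to $(G-e-f)-\widehat{S}$ strictly increases the number of odd components, while each of the two edge deletions can only increase it by $0$ or $2$. Combining this with the index-two structure of $G-e$ and of $G-f$ --- in particular that $u \in I_4$, and that the odd components of $(G-f)-S_4$ are exactly the $|S_4|-1$ isolated vertices forming $I_4$ together with a single large component --- should pin down the component structure of $G-\widehat{S}$ tightly enough to exhibit a nontrivial barrier of $G$ (assembled from $S_2$ and the neighbours of $v$, which the $S_4$-structure traps), contradicting that $G$ is a brick. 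I expect this last step --- reconciling the locations of the isolated-vertex sets and the large components under the two deletions, and extracting the surviving barrier of $G$ --- to be the main obstacle, and to require the same kind of careful barrier bookkeeping that Carvalho et al.\ use for the index-two case in \cite{clm06}.
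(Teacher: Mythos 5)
You have correctly reduced Claim~\ref{claim:core-subset}: using part \emph{(ii)} of the lemma (namely $S_3 \subseteq S_1 - u$ and $I_3 \subseteq I_1 - y$) and the fact that all eight sets live in single colour classes, the claim is indeed equivalent to the pair of inclusions $S_4 \subseteq S_2 \cup I_1$ and $I_4 \subseteq S_1 \cup I_2$, and your local neighbourhood argument that the first of these forces the second is correct. So far so good, and this bookkeeping is a clean way to organize what the paper also does (the paper derives the $B$-side inclusion from the $A$-side one via connectedness of $J - \{x_1,x_2\}$, which is the same idea).

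But the crux of the claim is exactly the inclusion you have not proved: $S_4 \subseteq S_2 \cup I_1$, equivalently that no $v_1 \in A-(S_2 \cup I_1)$ can lie in $S_4$. Your sketch here stops at a hope: you observe that deleting an edge can raise the number of odd components of $G-\widehat{S}$ by at most $2$, and then say you expect the ``index-two structure'' to pin things down and exhibit a barrier of $G$, and you acknowledge this as ``the main obstacle.'' That is not a proof; the parity observation by itself does not yield a barrier of $G$, and the phrase ``should pin down the component structure'' is precisely where the hard work is. The paper's argument here is genuinely different and constructive: it picks an arbitrary $v_2 \in S_2$ (so $v_2 \in S_4$), and shows directly that $(G-e-f) - \{v_1,v_2\}$ has a perfect matching, which immediately forbids $v_1 \in S_4$. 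The construction uses the tight-cut decomposition of $G-e-f$ into the bipartite pieces $H_1$, $H_2$ and the brick $J$ (which equals the brick of $G-e$, because $f$ has both ends in the shore $X_1$), takes a perfect matching of $J - \{x_2, v_1\}$, and glues it to matchings of $H_1$ and $H_2 - \{v_2, \overline{x_2}\}$. Nothing in your sketch supplies an analogue of this step, and the barrier-counting route you gesture at would still have to recover essentially the same information about which components of $(G-e-f)-\widehat{S}$ are odd and where the isolated vertices sit --- so you are not avoiding the ``careful barrier bookkeeping,'' you are deferring it. As written, the proposal has a genuine gap at the single point that actually carries the weight of the claim.
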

\begin{proof}
Let $v_1 \in A-(S_2 \cup I_1)$. By {\it (ii)}, $v_1 \notin I_3$.
To prove that $v_1$ lies in $A-(S_4 \cup I_3)$, it suffices
to show that $v_1 \notin S_4$.

\smallskip
Now, let $v_2$ be any vertex in~$S_2$. We have already
shown that $S_2 \subseteq S_4$, and thus $v_2 \in S_4$.
Note that, if $v_1$ also belongs to the barrier~$S_4$,
then $(G-f)-\{v_1,v_2\}$ would not have a perfect matching.
In the following paragraph, we will show that $(G-e-f) - \{v_1,v_2\}$
has a perfect matching, say~$M$; consequently, $v_1 \notin S_4$.

\smallskip
Let $H_1$ be the graph~$(G-e-f)/\overline{X_1} \rightarrow \overline{x_1}$,
and let $H_2$ be the graph $(G-e-f)/\overline{X_2} \rightarrow \overline{x_2}$
where $X_2 := S_2 \cup I_2$.
Note that $H_1$~and~$H_2$ are bipartite matching covered graphs.
Let $J:=((G-e-f)/X_1 \rightarrow x_1)/X_2 \rightarrow x_2$.
Note that $J$ is the brick of~$G-e-f$.
Let $M_J$ be a perfect matching of~$J-\{x_2,v_1\}$.
Let $g$ denote the edge of $M_J$ incident with the contraction vertex~$x_1$.
Let $M_1$ be a perfect matching of~$H_1$ which contains~$g$.
Let $M_2$ be a perfect matching of~$H_2-\{v_2,\overline{x_2}\}$.
Observe that $M:=M_1 + M_J +M_2$ is the desired matching.

\smallskip
Now, let $v \in B-(S_1 \cup I_2)$.
By {\it (ii)}, $v \notin S_3$.
To prove that $v$ lies in $B-(S_3 \cup I_4)$, it suffices
to show that $v \notin I_4$.
To see this, note that since $J$ is a brick,
by Theorem~\ref{thm:elp-bricks}, $J-\{x_1,x_2\}$
is connected; thus, $v$ is not isolated in $(G-f)-S_4$, that is,
$v \notin I_4$.
\end{proof}

It follows from {\it (ii)} and {\it (iv)} that the end~$y$ of~$e$
lies in~$S_4$, and thus $S_2$ is a proper subset of~$S_4$.
Also, we infer from {\it (ii)} and {\it (iv)} that $I_2$ is a subset
of~$I_4$. Furthermore, the end~$u$ of~$f$ lies in~$I_4$,
whence $I_2$ is a proper subset of~$I_4$. This proves~{\it (iii)}.

\smallskip
It remains to prove {\it (v)} and {\it (vi)}.
As noted above, $u \in I_4$.
Thus, all neighbors of~$u$ in~$G$ lie in~$S_4 \cup \{w\} \subseteq S_4 \cup I_3$.
It follows from {\it (iv)} that $N(u) \subseteq S_2 \cup I_1$.
This proves~{\it (v)}.

\smallskip
Finally, we prove~{\it (vi)}.
Recall that \bipartitebarrier{f}{S_4} denotes the bipartite matching covered graph
$(H-f)/ \overline{X_4} \rightarrow \overline{x_4}$ where $X_4:=S_4 \cup I_4$,
and that \candidateset{f}{S_4} is the set of those removable edges
of \bipartitebarrier{f}{S_4} which are not incident with the contraction
vertex~$\overline{x_4}$. Since $f$ is \Rcomp\ in~$G-e$
(by Proposition~\ref{prop:edges-of-candidate-set}), the exchange property
(Proposition~\ref{prop:exchange-property-Rcompatible-nbmcg})
implies that $e$ is \Rcomp\ in $G-f$. Now,
since the end~\vertexb~of~$e$ lies in~$I_4$,
the last assertion of Proposition~\ref{prop:edges-of-candidate-set}
implies that $e$ is a member of \candidateset{f}{S_4}.
This proves~{\it (vi)}, and finishes the proof of the
Equal Rank Lemma.
\end{proof}

\subsection{Proof of Theorem~\ref{thm:rank-plus-index}}
\label{sec:proof-of-rank-plus-index-thm}

Before we proceed to prove Theorem~\ref{thm:rank-plus-index},
we state two results of Carvalho et al. \cite{clm06} which are useful to us.
Suppose that $G$ is an \Rbrick\ and $e$ is an \Rcomp\ edge which is not
thin. We let $S_1$ denote a maximal nontrivial barrier of~$G-e$ such that
$|S_1| \geq 3$, and let $f$ denote a member of the candidate set \candidateset{e}{S_1}.

\smallskip
Note that, since $e$ is not thin, its rank is at most $n-4$ where $n:=|V(G)|$.
If the index of $f$ is zero then its rank is~$n$, and in particular,
it is greater than that of~$e$.
The following result of Carvalho et al. \cite[Lemma 31]{clm06} shows that this
conclusion holds even if the index of~$f$ is one.

\begin{lem}
\label{lem:index-one-candidate-greater-rank}
Suppose that $f$ is a member of the candidate set \candidateset{e}{S_1}.
If the index of~$f$ is one then ${\sf rank}(f) > {\sf rank}(e)$. \qed
\end{lem}

The following corollary of
Lemmas~\ref{lem:equal-rank-barrier-containment} and
\ref{lem:index-one-candidate-greater-rank}
was used implicitly by Carvalho et al. \cite{clm06} in their proof
of the Thin Edge Theorem (\ref{thm:clm-thin-bricks}).
We provide its proof for the sake of completeness.

\begin{cor}
\label{cor:adjacent-candidates-greater-rank}
Assume that the index of $e$ is two.
If the candidate set \candidateset{e}{S_1} contains two adjacent edges,
say $f$ and $g$,
then at least one of them has rank strictly greater than ${\sf rank}(e)$.
\end{cor}
\begin{proof}
We know by Proposition~\ref{prop:edges-of-candidate-set} that each of $f$ and $g$
has rank at least ${\sf rank}(e)$.
If either of them has rank strictly greater than that of $e$ then there is nothing to prove.
Now, suppose that ${\sf rank}(f)={\sf rank}(g) = {\sf rank}(e)$.
It follows from Lemma~\ref{lem:index-one-candidate-greater-rank}
that both $f$~and~$g$ are of index two.
We intend to arrive at a contradiction using
Lemma~\ref{lem:equal-rank-barrier-containment}.
We let $I_1$ denote the set of isolated vertices of~$(G-e)-S_1$,
and $S_2$ denote the unique maximal nontrivial barrier of~$(G-e) / (S_1 \cup I_1)$.
By Lemma~\ref{lem:equal-rank-barrier-containment}, $S_2$ is a subset of a barrier
of~$G-f$, and likewise, $S_2$ is a subset of a barrier of~$G-g$.

\smallskip
Consider two distinct vertices of~$S_2$, say $v_1$ and $v_2$.
Let $M$ be a perfect matching of the graph $G-\{v_1,v_2\}$.
(Such a perfect matching exists as $G$ is a brick.)
As noted above, $S_2$ is a subset
of a barrier of~$G-f$. In particular, $v_1$ and $v_2$ lie in a barrier of~$G-f$,
whence $(G-f) - \{v_1,v_2\}$ has no perfect matching.
Thus $f$ lies in~$M$.
Likewise, $g$ also lies in~$M$.
This is absurd since $f$ and $g$ are adjacent.
We conclude that one of $f$ and $g$ has rank strictly greater than ${\sf rank}(e)$.
This completes the proof of Corollary~\ref{cor:adjacent-candidates-greater-rank}.
\end{proof}

We now proceed to prove Theorem~\ref{thm:rank-plus-index}.

\medskip
\noindent
\underline{Proof of Theorem~\ref{thm:rank-plus-index}}:
As in the statement of the theorem,
let $e$ denote an \Rcomp\ edge of an \Rbrick~$G$.
If the edge~$e$ is thin, then there is nothing to prove.
Now consider the case in which $e$ is not thin.
By the Three Case Lemma (\ref{lem:three-case}),
$G-e$ has either one or two maximal nontrivial barriers,
and by Proposition~\ref{prop:equivalent-definition-Rthin},
at least one such barrier has three or more vertices.
Our goal is to establish the existence of another
\Rcomp\ edge~$f$ which satisfies conditions {\it (i)} and {\it (ii)}
in the statement of Theorem~\ref{thm:rank-plus-index}.

\smallskip
Recall that each candidate edge (relative to $e$ and a barrier of~$G-e$
with three or more vertices) is an \Rcomp\ edge of~$G$ which
satisfies condition {\it (i)} of Theorem~\ref{thm:rank-plus-index}
and has rank at least ${\sf rank}(e)$.
(See Definition~\ref{Def:candidate-set} and
Proposition~\ref{prop:edges-of-candidate-set}.)
Furthermore, if a candidate has rank strictly greater than ${\sf rank}(e)$,
then by Proposition~\ref{prop:greater-rank-suffices}, it also satisfies
condition~{\it (ii)} of Theorem~\ref{thm:rank-plus-index},
and in this case we are done.
Keeping these observations in view,
we now use Lemma~\ref{lem:index-one-candidate-greater-rank} to
get rid of the case in which index of~$e$ is one.

\begin{Claim}
\label{claim:assume-index-two}
We may assume that the index of~$e$ is two.
\end{Claim}
\begin{proof}
Suppose not. Then the index of~$e$ is one, and we let $S$ denote the
unique maximal nontrivial barrier of~$G-e$. As discussed earlier, $|S| \geq 3$.
Let $f$ denote a member of the candidate set \candidateset{e}{S},
which is nonempty by Corollary~\ref{cor:candidate-set-nonempty}.
If the index of $f$ is zero then its rank is clearly greater than ${\sf rank}(e)$, and
by Lemma~\ref{lem:index-one-candidate-greater-rank}, this conclusion holds
even if the index of $f$ is one. Now consider the case in which $f$ is of index two.
Since \mbox{${\sf rank}(f) \geq {\sf rank}(e)$}, we conclude that
$f$ satisfies condition~{\it (ii)}, Theorem~\ref{thm:rank-plus-index}.
Thus, irrespective of its index, the edge~$f$ satisfies
both conditions~{\it (i)} and {\it (ii)},
and we are done.
\end{proof}

We shall now invoke Corollary~\ref{cor:adjacent-candidates-greater-rank} to
dispose of the case in which the candidate set (relative to some barrier of~$G-e$)
is not a matching.

\begin{Claim}
\label{claim:assume-candidate-set-matching}
We may assume that if $S$ is a nontrivial barrier
(not necessarily maximal)
of~$G-e$ with three or more
vertices then the corresponding candidate set \candidateset{e}{S} is a matching.
\end{Claim}
\begin{proof}
Suppose that the candidate set \candidateset{e}{S} is not a matching,
and thus it contains two adjacent edges, say $f$ and $g$.
We let~$S^*$ denote the maximal nontrivial barrier of~$G-e$ such that
$S \subseteq S^*$. By Corollary~\ref{cor:candidate-containment-property},
edges $f$ and $g$ are members of \candidateset{e}{S^*} as well.
Since $e$ is of index two (by Claim~\ref{claim:assume-index-two}),
Corollary~\ref{cor:adjacent-candidates-greater-rank} implies
that at least one of $f$ and $g$, say~$f$, has rank strictly greater than
that of~$e$. Thus $f$ satisfies both conditions~{\it (i)} and {\it (ii)},
Theorem~\ref{thm:rank-plus-index}, and we are done.
\end{proof}

Now, since $e$ is of index two (by Claim~\ref{claim:assume-index-two}), the
graph~$G-e$ has precisely two maximal nontrivial barriers. Among these two,
we shall denote by $S_1$ the barrier which is bigger (breaking ties arbitrarily if
they are of equal size), and by $I_1$ the set of isolated vertices of~$(G-e)-S_1$.
Thus $|S_1| \geq 3$.
Let~\vertexa~and~\vertexb~denote the ends of~$e$.
We adopt Notation~\ref{Not:Rcompatible-in-Rbrick}.
Assume without loss of generality that $S_1$ is a subset of~$B$,
and thus by the Three Case Lemma (\ref{lem:three-case}),
the end~\vertexa~of~$e$ lies in~$I_1$.

\smallskip
As the candidate set \candidateset{e}{S_1} is a matching
(by Claim~\ref{claim:assume-candidate-set-matching}), we invoke the observations
made in Section~\ref{sec:candidate-set-matching}, with $S_1$ playing the
role of~$S$, and $I_1$ playing the role of~$I$, and likewise,
$X_1:=S_1 \cup I_1$ playing the role of~$X$.
In particular, we adopt
Notations~\ref{Not:candidates}, \ref{Not:last-vertex-of-barrier} and
\ref{Not:the-special-candidate} and
we apply Proposition~\ref{prop:candidate-set-matching}.
See Figure~\ref{fig:rank-plus-index-proof}.

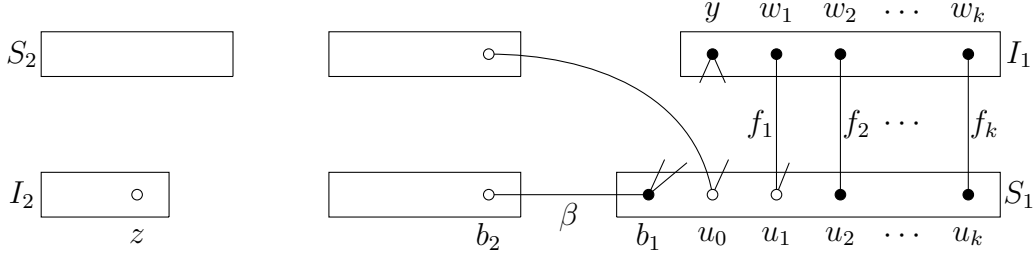
\begin{figure}[!ht]
\centering
\begin{tikzpicture}[scale=0.85]
\draw (6.3,0.35)node[nodelabel]{$S_1$};
\draw (6.3,2.55)node[nodelabel]{$I_1$};

\draw (-9.3,0.35)node[nodelabel]{$I_2$};
\draw (-9.3,2.55)node[nodelabel]{$S_2$};


\draw (1.5,0.35) to [out=100,in=0] (-2,2.55);
\draw (-2,2.55)node{};

\draw (0.5,0.35) -- (0.75,0.9);
\draw (0.5,0.35) -- (1.1,0.85);

\draw (1.5,0.35) -- (1.75,0.9);

\draw (2.5,0.35) -- (2.7,0.85);

\draw (1.5,2.55) -- (1.3,2.1);
\draw (1.5,2.55) -- (1.7,2.1);

\draw (2.25,1.45)node[nodelabel]{$f_1$};
\draw (3.75,1.45)node[nodelabel]{$f_2$};
\draw (5.75,1.45)node[nodelabel]{$f_k$};
\draw (4.5,1.45)node[nodelabel]{$\cdots$};

\draw (0,0) -- (6,0) -- (6,0.7) -- (0,0.7) -- (0,0);

\draw (-1.5,0) -- (-4.5,0) -- (-4.5,0.7) -- (-1.5,0.7) -- (-1.5,0);

\draw (-7,0) -- (-9,0) -- (-9,0.7) -- (-7,0.7) -- (-7,0);

\draw (1,2.2) -- (6,2.2) -- (6,2.9) -- (1,2.9) -- (1,2.2);

\draw (-1.5,2.2) -- (-4.5,2.2) -- (-4.5,2.9) -- (-1.5,2.9) -- (-1.5,2.2);

\draw (-6,2.2) -- (-9,2.2) -- (-9,2.9) -- (-6,2.9) -- (-6,2.2);

\draw (5.5,0.35)node[fill=black]{} -- (5.5,2.55)node[fill=black]{};
\draw (3.5,0.35)node[fill=black]{} -- (3.5,2.55)node[fill=black]{};
\draw (2.5,0.35)node{} -- (2.5,2.55)node[fill=black]{};

\draw (0.5,0.35)node[fill=black]{};
\draw (1.5,0.35)node{};
\draw (1.5,2.55)node[fill=black]{};

\draw (0.5,0.35) -- (-2,0.35);
\draw (-0.75,0)node[nodelabel]{$\beta$};
\draw (-2,0.35)node{};
\draw (-2,-0.3)node[nodelabel]{$b_2$};

\draw (0.5,-0.3)node[nodelabel]{$b_1$};
\draw (1.5,-0.3)node[nodelabel]{$u_0$};
\draw (2.5,-0.3)node[nodelabel]{$u_1$};
\draw (3.5,-0.3)node[nodelabel]{$u_2$};
\draw (4.5,-0.3)node[nodelabel]{$\cdots$};
\draw (5.5,-0.3)node[nodelabel]{$u_k$};

\draw (1.5,3.2)node[nodelabel]{$y$};
\draw (2.5,3.2)node[nodelabel]{$w_1$};
\draw (3.5,3.2)node[nodelabel]{$w_2$};
\draw (4.5,3.2)node[nodelabel]{$\cdots$};
\draw (5.5,3.2)node[nodelabel]{$w_k$};

\draw (-7.5,0.35)node{};
\draw (-7.5,-0.3)node[nodelabel]{$z$};

\end{tikzpicture}
\vspace*{-0.3in}
\caption{Index of~$e$ is two, and $S_1$ is the largest barrier of~$G-e$}
\label{fig:rank-plus-index-proof}
\end{figure}

\smallskip
We let $S_2$ denote the unique maximal nontrivial barrier of~$(G-e)/X_1$,
and $I_2$ the set of isolated vertices of~$(G-e)-S_2$.
By the Three Case Lemma (\ref{lem:three-case}),
the end~\vertexb~of~$e$ lies
in~$I_2$, as shown in Figure~\ref{fig:rank-plus-index-proof}.
Note that $|S_2| \leq |S_1|$ by the choice of~$S_1$.

\smallskip
Note that, as per statements {\it (iv)} and {\it (v)} of
Proposition~\ref{prop:candidate-set-matching},
the edge $f_1=u_1w_1$ is the only member of
the candidate set \candidateset{e}{S_1} whose end in the barrier~$S_1$
(that is, vertex~$u_1$) has some neighbour which lies in~$\overline{X_1}$.
Also, if $|S_1|=3$ then $f_1$ is the unique member of \candidateset{e}{S_1}.
For these reasons, it will play a special role.

\begin{Claim}
\label{claim:assume-special-candidate-equal-rank}
We may assume that ${\sf rank}(f_1) = {\sf rank}(e)$.
Consequently, the following hold:
\begin{enumerate}[(i)]
\item the index of~$f_1$ is two,
\item all neighbours of~$u_1$ lie in~$S_2 \cup I_1$, and
\item the vertex~$u_0$ has at least one neighbour in the set~$A-(S_2 \cup I_1)$.
\end{enumerate}
\end{Claim}
\begin{proof}
By Proposition~\ref{prop:edges-of-candidate-set},
$f_1$ is an \Rcomp\ edge which has rank at least that of~$e$,
and it satisfies condition~{\it (i)}, Theorem~\ref{thm:rank-plus-index}.
If ${\sf rank}(f_1) > {\sf rank}(e)$, then by Proposition~\ref{prop:greater-rank-suffices},
$f_1$ satisfies condition~{\it (ii)} as well, and we are done. We may thus assume
that ${\sf rank}(f_1) = {\sf rank}(e)$. It follows from
Lemma~\ref{lem:index-one-candidate-greater-rank} that the index of~$f_1$ is two;
that is, {\it (i)}~holds.
Since $e$ and $f_1=u_1w_1$ are of equal rank and of index two each,
the Equal Rank Lemma (\ref{lem:equal-rank}){\it (v)}
implies that each neighbour of $u_1$ lies in the set~$S_2 \cup I_1$, and this
proves {\it (ii)}. We shall now use this fact to deduce {\it (iii)}.

\smallskip
Since $H$ is bipartite and matching covered,
Proposition~\ref{prop:characterizations-of-bipmcg}{\it (ii)}
implies that the neighbourhood of the set~$A-(S_2 \cup I_1)$, in the graph~$H$,
has cardinality at least $|A-(S_2 \cup I_1)| + 1$, and since
$|A-(S_2 \cup I_1)| = |B - (S_1 \cup I_2)|$, we conclude that
the set~$A-(S_2 \cup I_1)$ has at least one neighbour
which is not in~$B-(S_1 \cup I_2)$;
it follows from Proposition~\ref{prop:candidate-set-matching}
and statement {\it (ii)}
proved above
that the only such neighbour is the vertex~$u_0$ of barrier~$S_1$.
In other words, the vertex $u_0$ has at least one neighbour in the
set~$A-(S_2 \cup I_1)$ as shown in Figure~\ref{fig:rank-plus-index-proof};
this proves {\it (iii)}, and completes the proof of
Claim~\ref{claim:assume-special-candidate-equal-rank}.
\end{proof}

We shall now consider two cases depending on the cardinality of~$S_1$.

\bigskip
\noindent
\underline{Case 1}: $|S_1| \geq 4$.

\medskip
\noindent
We invoke Proposition~\ref{prop:candidate-set-matching-barrier-size-four-or-more},
with $S_1$ playing the role of~$S$, and we adjust notation accordingly.
See Figure~\ref{fig:rank-plus-index-proof-barrier-size-four-or-more}.
Observe that $Q:=u_2w_2b_1$\vertexa$u_2$ is a $4$-cycle of~$G$ which
contains the edge~$f_2=u_2w_2$. Since $f_2$ is a candidate,
it is an \Rcomp\ edge whose rank is at least that of~$e$, and it
satisfies condition~{\it (i)}, Theorem~\ref{thm:rank-plus-index}.
We will use the $4$-cycle~$Q$ and the Equal Rank Lemma to conclude that
$f_2$ has rank strictly greater than that of~$e$, and thus it satisfies
condition~{\it (ii)} as well.

\begin{figure}[!ht]
\centering
\begin{tikzpicture}[scale=0.85]
\draw (6.3,0.35)node[nodelabel]{$S_1$};
\draw (6.3,2.55)node[nodelabel]{$I_1$};

\draw (-9.3,0.35)node[nodelabel]{$I_2$};
\draw (-9.3,2.55)node[nodelabel]{$S_2$};


\draw (1.5,0.35) to [out=100,in=0] (-2,2.55);
\draw (-2,2.55)node{};

\draw (0.5,0.35) -- (1.5,2.55);
\draw (0.5,0.35) -- (3.5,2.55);
\draw (1.5,2.55) -- (3.5,0.35);

\draw (1.5,0.35) -- (1.75,0.9);

\draw (2.5,0.35) -- (2.7,0.85);


\draw (3.75,1.45)node[nodelabel]{$f_2$};
\draw (5.75,1.45)node[nodelabel]{$f_k$};
\draw (4.5,1.45)node[nodelabel]{$\cdots$};

\draw (0,0) -- (6,0) -- (6,0.7) -- (0,0.7) -- (0,0);

\draw (-1.5,0) -- (-4.5,0) -- (-4.5,0.7) -- (-1.5,0.7) -- (-1.5,0);

\draw (-7,0) -- (-9,0) -- (-9,0.7) -- (-7,0.7) -- (-7,0);

\draw (1,2.2) -- (6,2.2) -- (6,2.9) -- (1,2.9) -- (1,2.2);

\draw (-1.5,2.2) -- (-4.5,2.2) -- (-4.5,2.9) -- (-1.5,2.9) -- (-1.5,2.2);

\draw (-6,2.2) -- (-9,2.2) -- (-9,2.9) -- (-6,2.9) -- (-6,2.2);

\draw (5.5,0.35)node[fill=black]{} -- (5.5,2.55)node[fill=black]{};
\draw (3.5,0.35)node[fill=black]{} -- (3.5,2.55)node[fill=black]{};
\draw (2.5,0.35)node{} -- (2.5,2.55)node[fill=black]{};

\draw (0.5,0.35)node[fill=black]{};
\draw (1.5,0.35)node{};
\draw (1.5,2.55)node[fill=black]{};

\draw (0.5,0.35) -- (-2,0.35);
\draw (-0.75,0)node[nodelabel]{$\beta$};
\draw (-2,0.35)node{};
\draw (-2,-0.3)node[nodelabel]{$b_2$};

\draw (0.5,-0.3)node[nodelabel]{$b_1$};
\draw (1.5,-0.3)node[nodelabel]{$u_0$};
\draw (2.5,-0.3)node[nodelabel]{$u_1$};
\draw (3.5,-0.3)node[nodelabel]{$u_2$};
\draw (4.5,-0.3)node[nodelabel]{$\cdots$};
\draw (5.5,-0.3)node[nodelabel]{$u_k$};

\draw (1.5,3.2)node[nodelabel]{$y$};
\draw (2.5,3.2)node[nodelabel]{$w_1$};
\draw (3.5,3.2)node[nodelabel]{$w_2$};
\draw (4.5,3.2)node[nodelabel]{$\cdots$};
\draw (5.5,3.2)node[nodelabel]{$w_k$};

\draw (-7.5,0.35)node{};
\draw (-7.5,-0.3)node[nodelabel]{$z$};

\end{tikzpicture}
\vspace*{-0.3in}
\caption{When $|S_1| \geq 4$}
\label{fig:rank-plus-index-proof-barrier-size-four-or-more}
\end{figure}
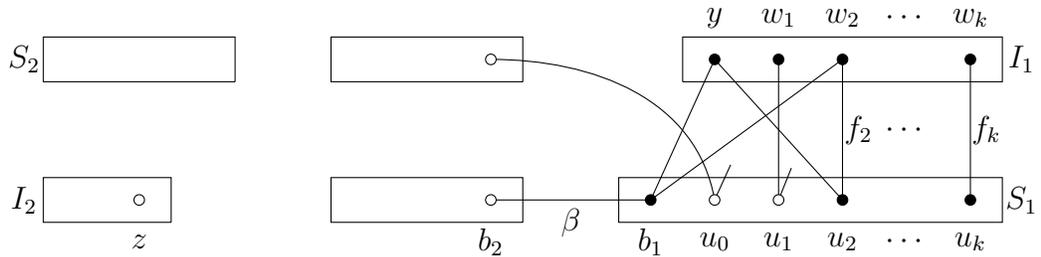

Now, let $v$ denote the neighbour
of $w_2$ which is distinct from~$u_2$ and $b_1$.
Clearly, $v \in S_1$;
by Proposition~\ref{prop:candidate-set-matching-barrier-size-four-or-more}{\it (iii)},
$v$ is distinct from~$u_0$.

\smallskip
Since each end of~$f_2$ is cubic, it is an \Rcomp\ edge of index two.
We first set up some notation concerning the barrier structure of~$G-f_2$.
We denote by $S_3$ the maximal nontrivial barrier of~$G-f_2$ which is a subset
of~$B$, and by $I_3$ the set of isolated vertices of~$(G-f_2)-S_3$.
We let $S_4$ denote the unique maximal nontrivial barrier of~$(G-f_2)/(S_3 \cup I_3)$,
and $I_4$ the set of isolated vertices of~$(G-f_2)-S_4$.
By the Three Case Lemma (\ref{lem:three-case}), the end~$u_2$ of~$f_2$ lies
in~$I_4$, and its end~$w_2$ lies in~$I_3$.
Also, since $w_2 \in I_3$, $v \in S_3$.

\smallskip
Now, suppose for the sake of contradiction that ${\sf rank}(f_2) = {\sf rank}(e)$.
Then we may apply the Equal Rank Lemma (\ref{lem:equal-rank}) to
conclude that $S_1 \cup I_2 = S_3 \cup I_4$ and that $S_2 \cup I_1 = S_4 \cup I_3$. Furthermore, by Claim~\ref{claim:assume-special-candidate-equal-rank}{\it (iii)},
the vertex~$u_0$ has a neighbour in $A-(S_4 \cup I_3)$,
and thus $u_0 \notin I_4$.
We infer that $u_0 \in S_3$.
We have thus shown that $v$ and $u_0$ are distinct vertices of the
barrier~$S_3$ of~$G-f_2$.
Consequently, $(G-f_2)-\{v,u_0\}$ has no perfect matching;
we will now use the $4$-cycle~$Q=u_2w_2b_1$\vertexa$u_2$
to contradict this assertion.

\smallskip
Since $G$ is a brick, $G-\{v,u_0\}$ has a perfect matching, say~$M$.
If~$f_2$ is not in~$M$ then we have the desired contradiction.
Now suppose that $f_2 \in M$. Since $v$ and $u_0$ both lie in the color
class~$B$ of~$H$, we conclude that $\alpha \in M$ and that $\beta \notin M$.
See Figure~\ref{fig:rank-plus-index-proof-barrier-size-four-or-more}.
Note that each of $v$ and $u_0$ is distinct from~$b_1$,
and that the neighbours of~$b_1$ are precisely $b_2, w_2$ and~\vertexa.
Since $\beta=b_1b_2$ is not in $M$, and since $f_2=u_2w_2$ lies in~$M$,
it must be the case that \vertexa$b_1$ lies in~$M$. Now observe that
the symmetric difference of $M$ and $Q$ is a perfect matching
of~$(G-f_2)-\{v,u_0\}$, and thus we have the desired contradiction.

\smallskip
We conclude that ${\sf rank}(f_2) > {\sf rank}(e)$, and thus
$f_2$ is the desired \Rcomp\ edge which satisfies both
conditions~{\it (i)} and {\it (ii)}, Theorem~\ref{thm:rank-plus-index}.

\bigskip
\noindent
\underline{Case 2}: $|S_1| = 3$.

\medskip
\noindent
We note that since $S_1$ has precisely three vertices,
by Remark~\ref{rem:candidate-set-matching-barrier-size-three},
all of the edges of~$G[X_1]$ are determined (where $X_1=S_1 \cup I_1$).
See Figure~\ref{fig:rank-plus-index-proof-barrier-size-three}.
Furthermore, $f_1$ is the only member
of the candidate set \candidateset{e}{S_1},
and by Claim~\ref{claim:assume-special-candidate-equal-rank},
its index is two and its rank is equal to ${\sf rank}(e)$.
We will examine the barrier structure of~$G-f_1$
using the Equal Rank Lemma~(\ref{lem:equal-rank}),
and argue that some edge adjacent with the given
edge~$e=$~\vertexa\vertexb~(that is,
either incident at~\vertexa, or incident at~\vertexb)
is \Rcomp\ and that its rank is strictly greater than ${\sf rank}(e)$.
Observe that, since ${\sf index}(e)=2$,
each edge adjacent with~$e$ satisfies
condition {\it (i)}, Theorem~\ref{thm:rank-plus-index}.

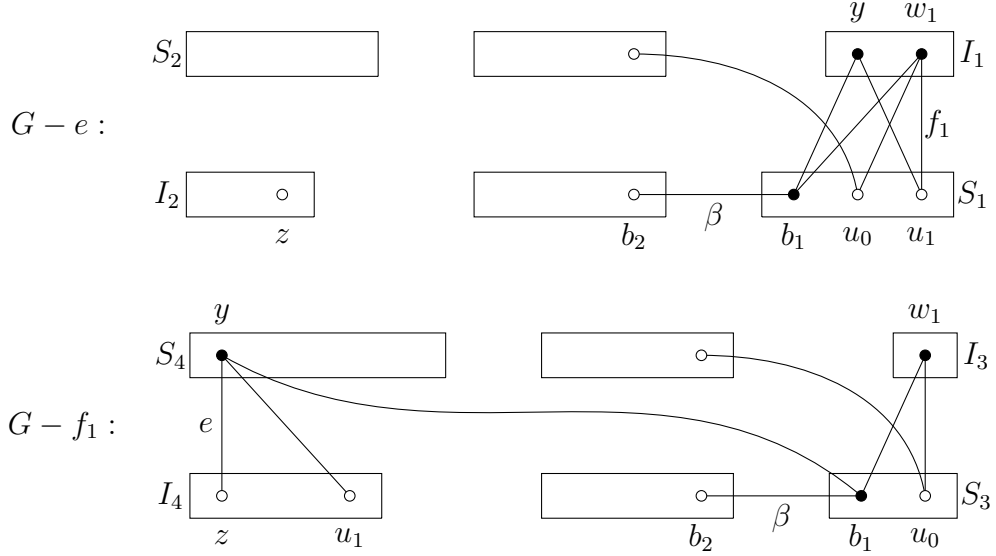
\begin{figure}[!ht]
\centering
\begin{tikzpicture}[scale=0.85]
\draw (-11,1.45)node[nodelabel]{$G-e:$};

\draw (3.3,0.35)node[nodelabel]{$S_1$};
\draw (3.3,2.55)node[nodelabel]{$I_1$};

\draw (-9.3,0.35)node[nodelabel]{$I_2$};
\draw (-9.3,2.55)node[nodelabel]{$S_2$};

\draw (1.5,0.35) to [out=100,in=0] (-2,2.55);
\draw (-2,2.55)node{};

\draw (0.5,0.35) -- (-2,0.35);
\draw (-0.75,0)node[nodelabel]{$\beta$};
\draw (-2,0.35)node{};
\draw (-2,-0.3)node[nodelabel]{$b_2$};

\draw (2.5,0.35) -- (1.5,2.55);

\draw (0.5,0.35) -- (1.5,2.55);
\draw (0.5,0.35) -- (2.5,2.55);

\draw (1.5,0.35) -- (2.5,2.55);

\draw (2.75,1.45)node[nodelabel]{$f_1$};

\draw (0,0) -- (3,0) -- (3,0.7) -- (0,0.7) -- (0,0);

\draw (1,2.2) -- (3,2.2) -- (3,2.9) -- (1,2.9) -- (1,2.2);

\draw (-1.5,0) -- (-4.5,0) -- (-4.5,0.7) -- (-1.5,0.7) -- (-1.5,0);

\draw (-7,0) -- (-9,0) -- (-9,0.7) -- (-7,0.7) -- (-7,0);

\draw (-1.5,2.2) -- (-4.5,2.2) -- (-4.5,2.9) -- (-1.5,2.9) -- (-1.5,2.2);

\draw (-6,2.2) -- (-9,2.2) -- (-9,2.9) -- (-6,2.9) -- (-6,2.2);

\draw (2.5,0.35)node{} -- (2.5,2.55)node[fill=black]{};

\draw (0.5,0.35)node[fill=black]{};
\draw (1.5,0.35)node{};
\draw (1.5,2.55)node[fill=black]{};

\draw (0.5,-0.3)node[nodelabel]{$b_1$};
\draw (1.5,-0.3)node[nodelabel]{$u_0$};
\draw (2.5,-0.3)node[nodelabel]{$u_1$};

\draw (1.5,3.2)node[nodelabel]{$y$};
\draw (2.5,3.2)node[nodelabel]{$w_1$};

\draw (-7.5,0.35)node{};
\draw (-7.5,-0.3)node[nodelabel]{$z$};

\end{tikzpicture}

\begin{tikzpicture}[scale=0.85]
\draw (-12,1.45)node[nodelabel]{$G-f_1:$};

\draw (2.3,0.35)node[nodelabel]{$S_3$};
\draw (2.3,2.55)node[nodelabel]{$I_3$};

\draw (-10.3,0.35)node[nodelabel]{$I_4$};
\draw (-10.3,2.55)node[nodelabel]{$S_4$};

\draw (1.5,0.35) to [out=100,in=0] (-2,2.55);
\draw (-2,2.55)node{};

\draw (0.5,0.35) -- (-2,0.35);
\draw (-0.75,0)node[nodelabel]{$\beta$};
\draw (-2,0.35)node{};
\draw (-2,-0.3)node[nodelabel]{$b_2$};





\draw (1.5,2.55) -- (0.5,0.35);
\draw (1.5,2.55) -- (1.5,0.35);

\draw (0,0) -- (2,0) -- (2,0.7) -- (0,0.7) -- (0,0);

\draw (1,2.2) -- (2,2.2) -- (2,2.9) -- (1,2.9) -- (1,2.2);

\draw (-1.5,0) -- (-4.5,0) -- (-4.5,0.7) -- (-1.5,0.7) -- (-1.5,0);

\draw (-7,0) -- (-10,0) -- (-10,0.7) -- (-7,0.7) -- (-7,0);

\draw (-1.5,2.2) -- (-4.5,2.2) -- (-4.5,2.9) -- (-1.5,2.9) -- (-1.5,2.2);

\draw (-6,2.2) -- (-10,2.2) -- (-10,2.9) -- (-6,2.9) -- (-6,2.2);


\draw (0.5,0.35)node[fill=black]{};
\draw (1.5,0.35)node{};
\draw (1.5,2.55)node[fill=black]{};

\draw (-7.5,0.35) -- (-9.5,2.55);
\draw (0.5,0.35) to [out=140,in=330] (-9.5,2.55);
\draw (0.5,-0.3)node[nodelabel]{$b_1$};
\draw (1.5,-0.3)node[nodelabel]{$u_0$};

\draw (1.5,3.2)node[nodelabel]{$w_1$};

\draw (-7.5,0.35)node{};
\draw (-7.5,-0.3)node[nodelabel]{$u_1$};

\draw (-9.5,0.35)node{} -- (-9.5,2.55)node[fill=black]{};
\draw (-9.5,3.2)node[nodelabel]{$y$};
\draw (-9.5,-0.3)node[nodelabel]{$z$};
\draw (-9.75,1.45)node[nodelabel]{$e$};

\end{tikzpicture}
\vspace*{-0.3in}
\caption{When $|S_1|=3$}
\label{fig:rank-plus-index-proof-barrier-size-three}
\end{figure}

We let $S_3$ denote the unique maximal nontrivial barrier of~$G-f_1$ which is a subset of~$B$, and $I_3$
the set of isolated vertices of~$(G-f_1)-S_3$. We denote by $S_4$ the unique maximal nontrivial barrier of~$(G-f_1)/(S_3 \cup I_3)$,
and by $I_4$ the set of isolated vertices of~$(G-f_1)-S_4$.
See Figure~\ref{fig:rank-plus-index-proof-barrier-size-three}.
By the Three Case Lemma (\ref{lem:three-case}),
the end~$u_1$ of~$f_1$ lies in~$I_4$,
and its end~$w_1$ lies in~$I_3$. Since each of $b_1$~and~$u_0$ is a neighbour
of~$w_1$ in $G-f_1$, they both lie in the barrier~$S_3$.
By Lemma~\ref{lem:equal-rank}{\it (ii)},
with $f_1$ playing the role of~$f$,
we conclude that $S_3 = \{b_1,u_0\}$ and that $I_3 = \{w_1\}$,
as shown in the figure.

\smallskip
Observe that by the choice of~$S_1$, the barrier~$S_2$ of~$G-e$
contains either two or three vertices.
However, irrespective of the cardinality of~$S_2$,
it follows from the above and from Lemma~\ref{lem:equal-rank}{\it (iv)}
that $S_4 = S_2 \cup \{$\vertexa$\}$ and that $I_4 = I_2 \cup \{u_1\}$.
In particular, the barrier $S_4$ of~$G-f_1$ contains either three or four vertices.
Note that the end~\vertexb~of~$e$ lies in~$I_2$ which is a subset of~$I_4$,
and its end~\vertexa~lies in~$S_4$.
Furthermore, Lemma~\ref{lem:equal-rank}{\it (vi)} implies
that $e$ is a member of the candidate set \candidateset{f_1}{S_4}.

\begin{Claim}
\label{claim:assume-other-barrier-cardinality-two}
We may assume that $e$ is the only member of \candidateset{f_1}{S_4}
which is incident with~\vertexb. Furthermore, we may assume that $|S_2|=2$.
\end{Claim}
\begin{proof}
Suppose there exists an edge~$g$ incident with~\vertexb~such
that $g$ is distinct from~$e$ and that $g \in$ \candidateset{f_1}{S_4}.
By Proposition~\ref{prop:edges-of-candidate-set}, $g$ is an \Rcomp\ edge
of the brick~$G$.
We now apply Corollary~\ref{cor:adjacent-candidates-greater-rank} (with
$f_1$ playing the role of~$e$, and with edges $e$ and $g$ playing the
roles of $f$ and $g$); at least one of $e$~and~$g$ has rank strictly
greater than ${\sf rank}(f_1)$. However,
by Claim~\ref{claim:assume-special-candidate-equal-rank}, the ranks
of $e$~and~$f_1$ are equal;
consequently, ${\sf rank}(g) > {\sf rank}(f_1) = {\sf rank}(e)$.
By Propostion~\ref{prop:greater-rank-suffices}, the edge~$g$ satisifes
condition~{\it (ii)}, Theorem~\ref{thm:rank-plus-index},
and it satisfies condition~{\it (i)} because it is adjacent with the edge~$e$,
and thus we are done. So we may assume that $e$ is the only
member of \candidateset{f_1}{S_4} which is incident with~\vertexb.
Using this, we shall deduce that the barrier~$S_2$ of~$G-e$
has only two vertices.

\smallskip
Suppose to the contrary that $|S_2|=3$.
By Claim~\ref{claim:assume-candidate-set-matching},
the candidate set \candidateset{e}{S_2} is a matching.
Consequently, as we did in the case of~$S_1$, we
may now invoke the observations made in Section~\ref{sec:candidate-set-matching},
with $S_2$ playing the role of~$S$, and $I_2$ playing the role of~$I$,
and likewise, $X_2:=S_2 \cup I_2$ playing the role of~$X$.
In particular, by Remark~\ref{rem:candidate-set-matching-barrier-size-three},
all of the edges of~$G[X_2]$ are determined.
It is worth noting that $S_2$ is also a maximal barrier of~$G-e$
(by the choice of~$S_1$). That is, each of $S_1$ and $S_2$ is a maximal
barrier of~$G-e$ with exactly three vertices.
Keeping this symmetry in view, we now choose
appropriate notation for those vertices of $X_2$ which are relevant to our argument.
See Figure~\ref{fig:rank-plus-index-proof-both-barriers-size-three}.

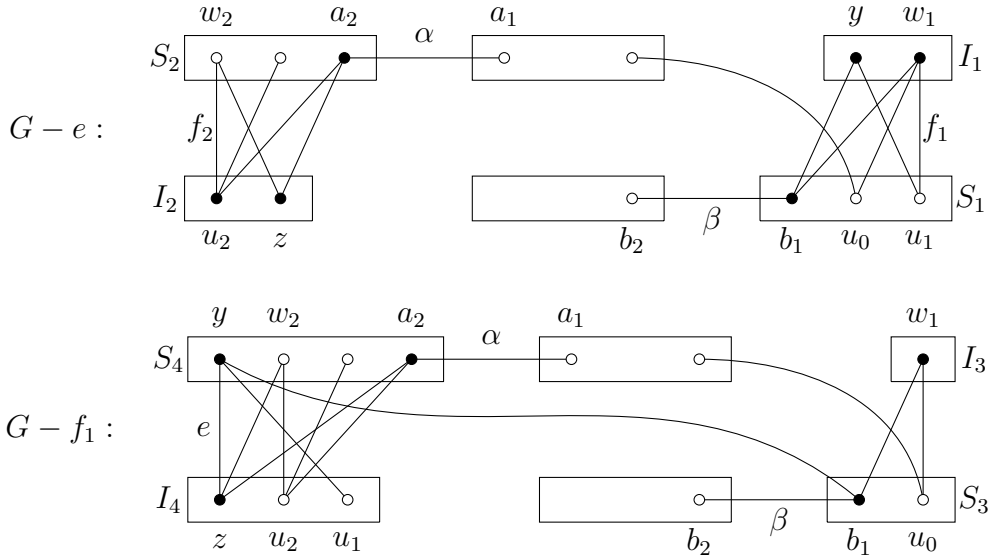
\begin{figure}[!ht]
\centering
\begin{tikzpicture}[scale=0.85]
\draw (-11,1.45)node[nodelabel]{$G-e:$};

\draw (3.3,0.35)node[nodelabel]{$S_1$};
\draw (3.3,2.55)node[nodelabel]{$I_1$};

\draw (-9.3,0.35)node[nodelabel]{$I_2$};
\draw (-9.3,2.55)node[nodelabel]{$S_2$};

\draw (1.5,0.35) to [out=100,in=0] (-2,2.55);
\draw (-2,2.55)node{};

\draw (0.5,0.35) -- (-2,0.35);
\draw (-0.75,0)node[nodelabel]{$\beta$};
\draw (-2,0.35)node{};
\draw (-2,-0.3)node[nodelabel]{$b_2$};

\draw (2.5,0.35) -- (1.5,2.55);

\draw (0.5,0.35) -- (1.5,2.55);
\draw (0.5,0.35) -- (2.5,2.55);

\draw (1.5,0.35) -- (2.5,2.55);

\draw (2.75,1.45)node[nodelabel]{$f_1$};

\draw (0,0) -- (3,0) -- (3,0.7) -- (0,0.7) -- (0,0);

\draw (1,2.2) -- (3,2.2) -- (3,2.9) -- (1,2.9) -- (1,2.2);

\draw (-1.5,0) -- (-4.5,0) -- (-4.5,0.7) -- (-1.5,0.7) -- (-1.5,0);

\draw (-7,0) -- (-9,0) -- (-9,0.7) -- (-7,0.7) -- (-7,0);

\draw (-1.5,2.2) -- (-4.5,2.2) -- (-4.5,2.9) -- (-1.5,2.9) -- (-1.5,2.2);

\draw (-6,2.2) -- (-9,2.2) -- (-9,2.9) -- (-6,2.9) -- (-6,2.2);

\draw (2.5,0.35)node{} -- (2.5,2.55)node[fill=black]{};

\draw (0.5,0.35)node[fill=black]{};
\draw (1.5,0.35)node{};
\draw (1.5,2.55)node[fill=black]{};

\draw (0.5,-0.3)node[nodelabel]{$b_1$};
\draw (1.5,-0.3)node[nodelabel]{$u_0$};
\draw (2.5,-0.3)node[nodelabel]{$u_1$};

\draw (1.5,3.2)node[nodelabel]{$y$};
\draw (2.5,3.2)node[nodelabel]{$w_1$};

\draw (-7.5,0.35) -- (-8.5,2.55);
\draw (-6.5,2.55) -- (-7.5,0.35);
\draw (-6.5,2.55) -- (-8.5,0.35);
\draw (-8.5,0.35) -- (-7.5,2.55);
\draw (-8.5,0.35)node[fill=black]{} -- (-8.5,2.55)node{};
\draw (-8.5,-0.3)node[nodelabel]{$u_2$};
\draw (-8.5,3.2)node[nodelabel]{$w_2$};
\draw (-8.75,1.45)node[nodelabel]{$f_2$};

\draw (-6.5,2.55) -- (-4,2.55);
\draw (-5.25,2.9)node[nodelabel]{$\alpha$};
\draw (-4,2.55)node{};
\draw (-4,3.2)node[nodelabel]{$a_1$};
\draw (-6.5,2.55)node[fill=black]{};
\draw (-6.5,3.2)node[nodelabel]{$a_2$};
\draw (-7.5,-0.3)node[nodelabel]{$z$};
\draw (-7.5,0.35)node[fill=black]{};
\draw (-7.5,2.55)node{};
\end{tikzpicture}

\begin{tikzpicture}[scale=0.85]
\draw (-12,1.45)node[nodelabel]{$G-f_1:$};

\draw (2.3,0.35)node[nodelabel]{$S_3$};
\draw (2.3,2.55)node[nodelabel]{$I_3$};

\draw (-10.3,0.35)node[nodelabel]{$I_4$};
\draw (-10.3,2.55)node[nodelabel]{$S_4$};

\draw (1.5,0.35) to [out=100,in=0] (-2,2.55);
\draw (-2,2.55)node{};

\draw (0.5,0.35) -- (-2,0.35);
\draw (-0.75,0)node[nodelabel]{$\beta$};
\draw (-2,0.35)node{};
\draw (-2,-0.3)node[nodelabel]{$b_2$};





\draw (1.5,2.55) -- (0.5,0.35);
\draw (1.5,2.55) -- (1.5,0.35);

\draw (0,0) -- (2,0) -- (2,0.7) -- (0,0.7) -- (0,0);

\draw (1,2.2) -- (2,2.2) -- (2,2.9) -- (1,2.9) -- (1,2.2);

\draw (-1.5,0) -- (-4.5,0) -- (-4.5,0.7) -- (-1.5,0.7) -- (-1.5,0);

\draw (-7,0) -- (-10,0) -- (-10,0.7) -- (-7,0.7) -- (-7,0);

\draw (-1.5,2.2) -- (-4.5,2.2) -- (-4.5,2.9) -- (-1.5,2.9) -- (-1.5,2.2);

\draw (-6,2.2) -- (-10,2.2) -- (-10,2.9) -- (-6,2.9) -- (-6,2.2);


\draw (0.5,0.35)node[fill=black]{};
\draw (1.5,0.35)node{};
\draw (1.5,2.55)node[fill=black]{};

\draw (-7.5,0.35) -- (-9.5,2.55);
\draw (0.5,0.35) to [out=140,in=330] (-9.5,2.55);
\draw (0.5,-0.3)node[nodelabel]{$b_1$};
\draw (1.5,-0.3)node[nodelabel]{$u_0$};

\draw (1.5,3.2)node[nodelabel]{$w_1$};

\draw (-7.5,0.35)node{};
\draw (-7.5,-0.3)node[nodelabel]{$u_1$};

\draw (-9.5,0.35) -- (-9.5,2.55)node[fill=black]{};
\draw (-9.5,3.2)node[nodelabel]{$y$};
\draw (-9.5,-0.3)node[nodelabel]{$z$};
\draw (-9.75,1.45)node[nodelabel]{$e$};

\draw (-8.5,3.2)node[nodelabel]{$w_2$};
\draw (-8.5,-0.3)node[nodelabel]{$u_2$};
\draw (-8.5,2.55) -- (-8.5,0.35);
\draw (-8.5,2.55) -- (-9.5,0.35);

\draw (-6.5,2.55) -- (-9.5,0.35);
\draw (-6.5,2.55) -- (-8.5,0.35);

\draw (-8.5,0.35) -- (-7.5,2.55);
\draw (-6.5,2.55) -- (-4,2.55);
\draw (-5.25,2.9)node[nodelabel]{$\alpha$};
\draw (-4,2.55)node{};
\draw (-4,3.2)node[nodelabel]{$a_1$};
\draw (-6.5,2.55)node[fill=black]{};
\draw (-6.5,3.2)node[nodelabel]{$a_2$};
\draw (-8.5,0.35)node{};
\draw (-8.5,2.55)node{};
\draw (-9.5,0.35)node[fill=black]{};
\draw (-7.5,2.55)node{};

\end{tikzpicture}
\vspace*{-0.3in}
\caption{When $|S_1|=|S_2|=3$}
\label{fig:rank-plus-index-proof-both-barriers-size-three}
\end{figure}

We shall let~$f_2:=u_2w_2$ denote the unique member of the
candidate set \candidateset{e}{S_2}, where $u_2 \in I_2$ and $w_2 \in S_2$.
In particular, $I_2=\{u_2,$\vertexb$\}$.
One of the ends of $\alpha=a_1a_2$ lies in the barrier~$S_2$; we adjust
notation so that $a_2 \in S_2$. Consequently, $w_2$~and~$a_2$ are distinct
vertices of~$S_2$.
The vertex~$a_2$ is cubic, and its neighbours are~\vertexb$, u_2$ and~$a_1$.
The vertex~$w_2$ is adjacent with~\vertexb~and~$u_2$, and all of its remaining
neighbours lie in~$\overline{X_2}$.

\smallskip
Observe that $Q:=$~\vertexb$w_2u_2a_2$\vertexb~is a $4$-cycle of the
bipartite graph \bipartitebarrier{f_1}{S_4} which contains
the vertex~\vertexb~whose degree is three. Consequently,
by Corollary~\ref{cor:quadrilateral-LV},
at least one of \vertexb$w_2$~and~\vertexb$a_2$ is removable in
\bipartitebarrier{f_1}{S_4}. However, since $a_2$ has degree two
in \bipartitebarrier{f_1}{S_4}, \vertexb$a_2$ is non-removable;
whence~\vertexb$w_2$ is removable.
It follows that~\vertexb$w_2$ is a member of the
candidate set \candidateset{f_1}{S_4}; this contradicts our first assumption.
We conclude that the barrier~$S_2$ has only two vertices, and this
completes the proof of Claim~\ref{claim:assume-other-barrier-cardinality-two}.
\end{proof}

By Proposition~\ref{prop:equivalent-definition-Rthin}, an \Rcomp\ edge of
index two is thin if and only if its rank is $n-4$; where $n:=|V(G)|$.
Observe that, since $|S_1|=3$ and $|S_2|=2$, the rank of~$e$ is $n-6$,
and in this sense, it is very close to being thin;
the same holds for the edge~$f_1$.
We will establish a symmetry between the barrier structure
of~$G-e$ and that of~$G-f_1$;
see Figure~\ref{fig:rank-plus-index-proof-only-one-barrier-size-three}.
Thereafter, we will
argue that the edge~$g:=$~\vertexa$u_1$ is an \Rthin\ edge of index two;
in particular, it is \Rcomp\ and its rank is~$n-4$,
and thus it satisfies condition {\it (ii)}, Theorem~\ref{thm:rank-plus-index}.
Since $g$ is adjacent with~$e$,
it satisfies condition {\it (i)} as well.

\smallskip
Since~$|S_2|=2$, the set~$I_2$ contains only the end~\vertexb~of~$e$,
and the neighbourhood of~\vertexb~is precisely the
set~$S_2 \cup \{$\vertexa$\}=S_4$.
Also, \mbox{$I_4 = I_2 \cup \{u_1\} = \{$\vertexb$,u_1\}$},
and by Claim~\ref{claim:assume-other-barrier-cardinality-two},
$e=$~\vertexa\vertexb~is the only member of the candidate set \candidateset{f_1}{S_4} which
is incident with~\vertexb.
In other words,~\vertexb~is incident with only one removable edge of the bipartite graph
\bipartitebarrier{f_1}{S_4}, namely, the edge~$e$.
We now deduce some consequences
of this fact using standard arguments.

\begin{figure}[!ht]
\centering
\begin{tikzpicture}[scale=0.85]
\draw (-10,1.45)node[nodelabel]{$G-e:$};

\draw (3.3,0.35)node[nodelabel]{$S_1$};
\draw (3.3,2.55)node[nodelabel]{$I_1$};

\draw (-8.3,0.35)node[nodelabel]{$I_2$};
\draw (-8.3,2.55)node[nodelabel]{$S_2$};

\draw (-7.5,2.55) to [out=280,in=180] (-4,0.35);
\draw (-4,0.35)node{};
\draw (-7.5,3.2)node[nodelabel]{$w_0$};
\draw (1.5,0.35) to [out=100,in=0] (-2,2.55);
\draw (-2,2.55)node{};

\draw (2.5,0.35) to [out=140,in=320] (-6.5,2.55);

\draw (0.5,0.35) -- (-2,0.35);
\draw (-0.75,0)node[nodelabel]{$\beta$};
\draw (-2,0.35)node{};
\draw (-2,-0.3)node[nodelabel]{$b_2$};

\draw (2.5,0.35) -- (1.5,2.55);

\draw (0.5,0.35) -- (1.5,2.55);
\draw (0.5,0.35) -- (2.5,2.55);

\draw (1.5,0.35) -- (2.5,2.55);

\draw (2.75,1.45)node[nodelabel]{$f_1$};

\draw (0,0) -- (3,0) -- (3,0.7) -- (0,0.7) -- (0,0);

\draw (1,2.2) -- (3,2.2) -- (3,2.9) -- (1,2.9) -- (1,2.2);

\draw (-1.5,0) -- (-4.5,0) -- (-4.5,0.7) -- (-1.5,0.7) -- (-1.5,0);

\draw (-7,0) -- (-8,0) -- (-8,0.7) -- (-7,0.7) -- (-7,0);

\draw (-1.5,2.2) -- (-4.5,2.2) -- (-4.5,2.9) -- (-1.5,2.9) -- (-1.5,2.2);

\draw (-6,2.2) -- (-8,2.2) -- (-8,2.9) -- (-6,2.9) -- (-6,2.2);

\draw (2.5,0.35)node[fill=black]{} -- (2.5,2.55)node[fill=black]{};

\draw (0.5,0.35)node[fill=black]{};
\draw (1.5,0.35)node{};
\draw (1.5,2.55)node[fill=black]{};

\draw (0.5,-0.3)node[nodelabel]{$b_1$};
\draw (1.5,-0.3)node[nodelabel]{$u_0$};
\draw (2.5,-0.3)node[nodelabel]{$u_1$};

\draw (1.5,3.2)node[nodelabel]{$y$};
\draw (2.5,3.2)node[nodelabel]{$w_1$};

\draw (-7.5,0.35) -- (-6.5,2.55);
\draw (-7.5,0.35) -- (-7.5,2.55)node{};
\draw (-7.5,0.35)node[fill=black]{};
\draw (-7.5,-0.3)node[nodelabel]{$z$};

\draw (-6.5,2.55) -- (-4,2.55);
\draw (-5.25,2.9)node[nodelabel]{$\alpha$};
\draw (-4,2.55)node{};
\draw (-4,3.2)node[nodelabel]{$a_1$};
\draw (-6.5,2.55)node[fill=black]{};
\draw (-6.5,3.2)node[nodelabel]{$a_2$};
\end{tikzpicture}

\begin{tikzpicture}[scale=0.85]
\draw (-11,1.45)node[nodelabel]{$G-f_1:$};

\draw (2.3,0.35)node[nodelabel]{$S_3$};
\draw (2.3,2.55)node[nodelabel]{$I_3$};

\draw (-9.3,0.35)node[nodelabel]{$I_4$};
\draw (-9.3,2.55)node[nodelabel]{$S_4$};

\draw (-7.5,2.55) to [out=280,in=180] (-4,0.35);
\draw (-4,0.35)node{};
\draw (1.5,0.35) to [out=100,in=0] (-2,2.55);
\draw (-2,2.55)node{};

\draw (0.5,0.35) -- (-2,0.35);
\draw (-0.75,0)node[nodelabel]{$\beta$};
\draw (-2,0.35)node{};
\draw (-2,-0.3)node[nodelabel]{$b_2$};





\draw (1.5,2.55) -- (0.5,0.35);
\draw (1.5,2.55) -- (1.5,0.35);

\draw (0,0) -- (2,0) -- (2,0.7) -- (0,0.7) -- (0,0);

\draw (1,2.2) -- (2,2.2) -- (2,2.9) -- (1,2.9) -- (1,2.2);

\draw (-1.5,0) -- (-4.5,0) -- (-4.5,0.7) -- (-1.5,0.7) -- (-1.5,0);

\draw (-7,0) -- (-9,0) -- (-9,0.7) -- (-7,0.7) -- (-7,0);

\draw (-1.5,2.2) -- (-4.5,2.2) -- (-4.5,2.9) -- (-1.5,2.9) -- (-1.5,2.2);

\draw (-6,2.2) -- (-9,2.2) -- (-9,2.9) -- (-6,2.9) -- (-6,2.2);


\draw (0.5,0.35)node[fill=black]{};
\draw (1.5,0.35)node{};
\draw (1.5,2.55)node[fill=black]{};

\draw (0.5,0.35) to [out=140,in=320] (-8.5,2.55);
\draw (0.5,-0.3)node[nodelabel]{$b_1$};
\draw (1.5,-0.3)node[nodelabel]{$u_0$};

\draw (1.5,3.2)node[nodelabel]{$w_1$};

\draw (-7.5,0.35)node[fill=black]{};
\draw (-7.5,-0.3)node[nodelabel]{$u_1$};
\draw (-7.5,3.2)node[nodelabel]{$w_0$};
\draw (-8.5,3.2)node[nodelabel]{$y$};
\draw (-8.5,-0.3)node[nodelabel]{$z$};
\draw (-8.75,1.45)node[nodelabel]{$e$};

\draw (-7.5,0.3) -- (-8.5,2.55);
\draw (-7.5,0.3) -- (-6.5,2.55);
\draw (-8.5,0.3) -- (-8.5,2.55)node[fill=black]{};
\draw (-8.5,0.3) -- (-7.5,2.55)node{};
\draw (-8.5,0.3)node[fill=black]{} -- (-6.5,2.55);

\draw (-6.5,2.55) -- (-4,2.55);
\draw (-5.25,2.9)node[nodelabel]{$\alpha$};
\draw (-4,2.55)node{};
\draw (-4,3.2)node[nodelabel]{$a_1$};
\draw (-6.5,2.55)node[fill=black]{};
\draw (-6.5,3.2)node[nodelabel]{$a_2$};

\end{tikzpicture}
\vspace*{-0.2in}
\caption{When $|S_1|=3$ and $|S_2|=2$}
\label{fig:rank-plus-index-proof-only-one-barrier-size-three}
\end{figure}
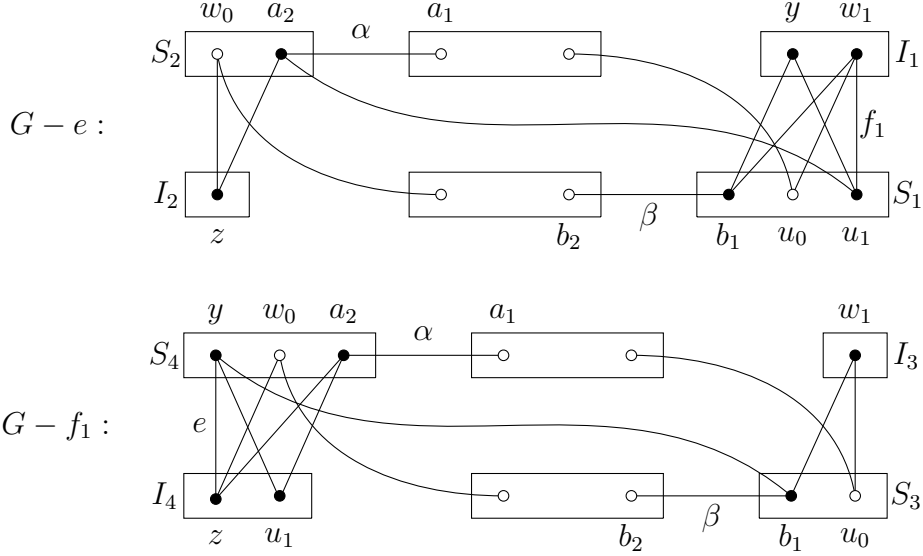

First of all, by Lemma~\ref{lem:non-removable-at-I}{\it (i)},
an end of the edge~$\alpha=a_1a_2$ lies in the barrier~$S_4$. Adjust notation
so that $a_2 \in S_4$. By statement {\it (ii)} of the same lemma,
$a_2$ has no neighbours in the set~$\overline{X_4}$
where $X_4:=S_4 \cup I_4$.
Consequently,
the neighbourhood of~$a_2$ is precisely $I_4 \cup \{a_1\} = \{$\vertexb$,u_1,a_1\}$.
Clearly, \vertexa~and $a_2$ are distinct vertices of~$S_4$, and we denote by~$w_0$
the remaining vertex of~$S_4$. Note that \mbox{$S_2 = \{w_0,a_2\}$}.

\smallskip
Next, we observe that if the vertices $u_1$ and $w_0$ are adjacent then
$Q:=$~\vertexb$w_0u_1a_2$\vertexb\ is a $4$-cycle of the bipartite
graph \bipartitebarrier{f_1}{S_4} and it contains the vertex~\vertexb~which has degree
three; by Corollary~\ref{cor:quadrilateral-LV}, one of the two
edges~\vertexb$w_0$ and~\vertexb$a_2$ is removable; however, this contradicts the fact
that $e=$~\vertexa\vertexb~is the only removable edge incident
with~\vertexb. Thus, the vertices $u_1$ and $w_0$ are nonadjacent.
It follows that $u_1$ is cubic, and its neighbourhood is precisely
$\{$\vertexa$,a_2,w_1\}$.

\smallskip
Observe that we have six cubic vertices whose neighbourhoods are fully
determined; these are:
the ends~\vertexa~and~\vertexb~of~$e$, the ends $u_1$ and $w_1$ of~$f_1$,
the end~$b_1$ of~$\beta$, and the end~$a_2$ of~$\alpha$.
There is a symmetry between the barrier structure of~$G-e$ and that of~$G-f_1$;
as is self-evident from
Figure~\ref{fig:rank-plus-index-proof-only-one-barrier-size-three}.
We have not determined the degrees of the two vertices $u_0$~and~$w_0$;
observe that if these vertices are not adjacent with each other
then $u_0$ has at least two neighbours in~$A-(S_2 \cup I_1)$ and
likewise, $w_0$ has at least two neighbours in $B-(S_1 \cup I_2)$;
whereas if $u_0w_0$ is an edge of~$G$ then
$u_0$ has at least one neighbour in~$A-(S_2 \cup I_1)$ and likewise,
$w_0$ has at least one neighbour in~$B-(S_1 \cup I_2)$.

\smallskip
As mentioned earlier,
we now proceed to prove that $g=$~\vertexa$u_1$ is an \Rthin\ edge. 
We let $J := ((G-e) / X_1 \rightarrow x_1) / X_2 \rightarrow x_2$
denote the unique brick of~$G-e$, where $X_1=S_1 \cup I_1$ and $X_2:=S_2 \cup I_2$.
Note that $J$ is near-bipartite with removable doubleton~$R$.

\begin{Claim}
\label{claim:g-Rthin}
The edge~$g=$~\vertexa$u_1$ is \Rthin.
(That is, $g$ is an \Rcomp\ edge of index two and its rank is $n-4$.)
\end{Claim}
\begin{proof}
Observe that $Q:=$~\vertexa$u_1w_1b_1$\vertexa~is a $4$-cycle in~$H = G-R$ which contains
the cubic vertex~\vertexa.
By Corollary~\ref{cor:quadrilateral-LV},
at least one of the edges $g=$~\vertexa$u_1$ and \vertexa$b_1$ is removable in~$H$.
Note that \vertexa$b_1$ is not removable,
whence $g$ is removable in~$H$.
To conclude that $g$ is \Rcomp,
it suffices to show that edges $\alpha$ and $\beta$ are admissible in~$G-g$.
We shall prove something more general, which is useful in establishing the thinness of~$g$ as well.

\smallskip
Observe that, in~$G-g$, the vertex~\vertexa~has neighbour set~$\{$\vertexb$,b_1\}$, and vertex~$u_1$ has neighbour set~$\{w_1,a_2\}$.
We will show that, if $v_1$ and $v_2$ are distinct vertices of the color class~$B$ such that $\{v_1,v_2\} \neq \{$\vertexb$,b_1\}$,
then $(G-g) - \{v_1,v_2\}$ has a perfect matching, say~$M$. This has two consequences worth noting. First of all, if $\{v_1, v_2\} = \{b_1,b_2\}$
then $M + \beta$ is a perfect matching of $G-g$ which contains $\alpha$ and $\beta$ both, whence $g$ is an \Rcomp\ edge of~$G$.
Secondly, it shows that $\{$\vertexb$,b_1\}$ is a maximal nontrivial barrier of~$G-g$.
An analogous argument establishes that $\{w_1,a_2\}$ is
also a maximal nontrivial barrier of~$G-g$, and consequently
Proposition~\ref{prop:equivalent-definition-Rthin} implies that $g$ is indeed \Rthin.

\smallskip
As mentioned above, suppose that $v_1$~and~$v_2$ are distinct vertices of~$B$ such that
$\{v_1,v_2\} \neq \{$\vertexb$,b_1\}$.
Let $N$ be a perfect matching of~$G-\{v_1,v_2\}$.
In what follows, we consider different possibilities, and in each of them, we exhibit a perfect matching~$M$ of $(G-g) -\{v_1,v_2\}$.
If $g \notin N$ then clearly $M:=N$. Now suppose that $g \in N$.
Note that, since $v_1,v_2 \in B$, the edge~$\alpha$ lies in~$N$
and $\beta$ does not lie in~$N$. If $b_1 \notin \{v_1,v_2\}$,
then the edge $b_1w_1$ lies in~$N$, and we let $M:= (N - g - b_1w_1) + f_1 + yb_1$.

\smallskip
Now consider the case in which $b_1 \in \{v_1,v_2\}$,
and adjust notation so that $b_1 = v_1$. Thus $v_2 \neq$~\vertexb,
whence~\vertexb$w_0 \in N$.
Also, $w_1u_0$ lies in~$N$.
Observe that $v_2$ lies in the set \mbox{$B-(S_1 \cup I_2)$}.
First, we consider the case when $u_0w_0$ is an edge of~$G$.
Observe that the six cycle \mbox{$C:=u_1$\vertexa\vertexb$w_0u_0w_1u_1$}
is $N$-alternating
and it contains the edge~$g$.
In this case, let $M$ denote the symmetric difference of $N$~and~$C$.

\smallskip
Finally, consider the situation in which $u_0w_0$ is not an edge of~$G$.
(In this case, to construct~$M$, we will not use the matching~$N$.)
As noted earlier, since $u_0$ and $w_0$ are nonadjacent,
$w_0$ has at least two distinct neighbours in the set~$B - (S_1 \cup I_2)$.
In particular, $w_0$ has at least one neighbour, say~$v'$,
which lies in~$B - (S_1 \cup I_2)$ and is distinct from~$v_2$.
Now, let $M_J$ be a perfect matching of~$J - \{v', v_2\}$.
Observe that $\alpha \in M_J$ and $\beta \notin M_J$.
Note that, in the matching~$M_J$, the
contraction vertex~$x_1$ is matched with some vertex in~$A- (S_2 \cup I_1)$,
which is a neighbour of~$u_0$ in the graph~$G$.
Now, we let $M:=M_J + w_0v' + f_1 + e$.

\smallskip
In every scenario, $M$ is a perfect matching of~$(G-g)-\{v_1,v_2\}$, as desired.
Thus, as discussed earlier, $g$ is \Rcomp\ as well as thin.
This proves Claim~\ref{claim:g-Rthin}.
\end{proof}

In summary, we have shown that $g=$~\vertexa$u_1$ is an \Rcomp\ edge
which satisfies both conditions {\it (i)} and {\it (ii)}, Theorem~\ref{thm:rank-plus-index}. This completes the proof.~\qed

\noindent
{\bf Acknowledgments}:
We are greatly indebted to both Joseph Cheriyan and U. S. R. Murty who have
helped us throughout this work by participating in all of
our discussions and making valuable suggestions.

\bibliographystyle{plain}
\bibliography{clm}

\end{document}